\documentclass{amsart}

\usepackage[utf8]{inputenc}
\usepackage[english]{babel}

\usepackage{amsmath, amssymb,mathtools}
\usepackage{dsfont}
\usepackage{cancel}
\usepackage{xfrac}		
\usepackage{mathdots}	
\usepackage{thmtools} 	
\usepackage{tcolorbox}	

\newtheorem{theorem}{Theorem}

\newtheorem{lemma}{Lemma}

\makeatletter
\renewcommand*\env@matrix[1][\arraystretch]{%
  \edef\arraystretch{#1}%
  \hskip -\arraycolsep
  \let\@ifnextchar\new@ifnextchar
  \array{*\c@MaxMatrixCols c}}
\makeatother



\usepackage{graphicx}
\usepackage{subcaption}
\usepackage{pgfplots}	


\usepackage{color}


\usepackage{tikz}
\usetikzlibrary{calc}
\usetikzlibrary{decorations.markings}
\usetikzlibrary{shapes.geometric,positioning}
\usetikzlibrary{arrows}
\tikzset{vertex/.style={minimum size=2mm,circle,fill=black,draw,inner sep=0pt},
         decoration={markings,mark=at position .5 with {\arrow[black,thick]{stealth}}}}
\pgfplotsset{compat=1.10}
\usepgfplotslibrary{fillbetween}
\usetikzlibrary{backgrounds}
\usetikzlibrary{patterns}

\newcommand{\R}{ \mathbb{R} }

\newcommand{\1}{\mathds{1}}

\renewcommand{\Re}[1]{\operatorname{Re}\{#1\}}
\renewcommand{\Im}[1]{\operatorname{Im}\{#1\}}
\newcommand{\ii}{\mathrm{i}}

\newcommand{\inner}[2]{\langle #1,#2 \rangle}
\newcommand{\norme}[1]{\left\lVert #1 \right\rVert}
\newcommand{\abs}[1]{\left\vert #1 \right\vert}
\newcommand{\e}[1]{\mathrm{e}^{#1}}

\renewcommand{\div}[1]{ {\rm div}\left(#1\right) }
\newcommand{\grad}[1]{\nabla #1}
\newcommand{\dpart}[2]{\frac{\partial #1}{\partial #2}}
\newcommand{\ddpart}[2]{\frac{\partial^2 #1}{\partial #2^2}}
\newcommand{\dpp}[2]{\frac{\partial}{\partial #2}\left(#1\right)}
\newcommand{\dn}[1]{\grad{#1} \cdot \hat{n}}

\newcommand{\bigO}{\mathcal{O}}

\renewcommand{\t}{\top}



%
\newtheorem{rem}[theorem]{Remark}

\newcommand{\Lim}[1]{\raisebox{0.5ex}{\scalebox{0.8}{$\displaystyle \lim_{#1}\;$}}}

\newcommand{\PH}[1]{{\color{black} #1}}

\newcommand{\JS}[1]{{\color{black} #1}}


\title{{Optimization of bathymetry for long waves with small amplitude}}
\author{Pierre-Henri COCQUET$^\star$}
\address{$^\star$ Universit\'e de La R\'eunion, Laboratoire PIMENT, 117 Avenue du G\'en\'eral Ailleret, 97430 Le Tampon, France}
\email{pierre-henri.cocquet@univ-reunion.fr}
\author{Sebasti\'an RIFFO$^\dagger$}
\address{$^\dagger$ CEREMADE, CNRS, UMR 7534, Universit\'e Paris-Dauphine, PSL University}
\email{sebastian.reyes-riffo@dauphine.eu}
\author{Julien Salomon$^*$}
\address{$^*$ INRIA  Paris,  ANGE  Project-Team,  75589  Paris  Cedex  12,  France
   and Sorbonne Universit\'e, CNRS, Laboratoire Jacques-Louis Lions, 75005 Paris, France}
\email{julien.salomon@inria.fr}

\begin{document}

\maketitle

\begin{abstract}
{
This paper deals with bathymetry-oriented optimization in the case of long waves with small amplitude. Under these two assumptions, the free-surface incompressible Navier-Stokes  system can be written as a wave equation where the bathymetry appears as a parameter in the spatial operator. Looking then for time-harmonic fields and writing the bottom topography as a perturbation of a flat bottom, we end up with a heterogeneous Helmholtz equation with impedance boundary condition. In this way, we study some PDE-constrained optimization problem for a Helmholtz equation in heterogeneous media whose coefficients are only bounded with bounded variation. We provide necessary condition for a general cost function to have at least one optimal solution. We also prove the convergence of a finite element approximation of the solution to  the considered Helmholtz equation as well as the convergence of discrete optimum toward the continuous ones. We end this paper with some numerical experiments to illustrate the theoretical results and show that some of their assumptions could actually be removed.
}
\end{abstract}


\section{Introduction}
Despite the fact that the bathymetry can be inaccurately known in many situations, wave propagation models strongly depend on this parameter to capture the flow behavior, which emphasize the importance of studying inverse problems concerning its reconstruction from free surface flows. 
In recent years a considerable literature has grown up around this subject. A review from Sellier identifies different techniques applied for bathymetry reconstruction \cite[Section 4.2]{Sellier2016}, which rely mostly on the derivation of an explicit formula for the bathymetry, numerical resolution of a governing system or data assimilation methods \cite{Honnorat2009,Beach-Wizard}. 

\JS{An} alternative is to use the bathymetry as control variable of a PDE-constrained optimization problem, an approach used in coastal engineering due to mechanical constraints associated with building structures and their interaction with sea waves. For instance, among the several aspects to consider when designing a harbor, building defense structures is essential to protect it against wave impact. These 
can be optimized to locally minimize the wave energy, by studying its interaction with the reflected waves \cite{IAMB}. Bouharguane and Mohammadi \cite{MB2,MB1} consider a time-dependent approach to study the evolution of sand motion at the seabed, which could also allow these structures to change in time. In this case, the proposed functionals are minimized using sensitivity analysis, a technique broadly applied in geosciences. 
From a mathematical point of view, the solving of these kinds of problem is mostly numerical. 
A theoretical approach applied to the modeling of surfing pools can be found in \cite{DB-KdV,NDZ}, where the goal is to maximize locally the energy of the prescribed wave. The former proposes to determine a bathymetry, whereas the latter sets the shape and displacement of an underwater object along a constant depth.

In this paper, we address the determination of a bathymetry from an optimization problem, where a reformulation of the Helmholtz equation acts as a constraint. Even though this equation is limited to describe waves of small amplitude, it is often used in engineering due to its simplicity, which leads to explicit solutions when a flat bathymetry is assumed. 
{
To obtain such a formulation, {we rely} on two asymptotic approximations of the free-surface incompressible Navier-Stokes equations. The first one is based on a 
long-wave theory approach and reduces the Navier-Stokes system to the Saint-Venant equations. The second one \JS{considers} waves of small amplitude from which the Saint-Venant model 
can be approximated by a wave-equation involving the bathymetry in its spatial operator. {It is finally} when considering time-harmonic solution of this wave equation that we get a Helmholtz equation with spatially-varying coefficients. 
Regarding the assumptions on the bathymetry to be optimized, {we assume the latter to be} a perturbation of a flat bottom with a compactly supported perturbation
which can thus be seen as a scatterer. Since we wish to be as closed to real-world applications as possible, {we also assume} that the bottom topography is not smooth and, for instance, can be discontinuous. We therefore end up with a constraint equation given by a time-harmonic wave equation, namely a Helmholtz equation, with non-smooth coefficients.  

It is worth noting that our bathymetry optimization problem aims at finding some parameters in our PDE that minimize a given cost function and can thus be seen 
as a parametric optimization problem (see e.g. \cite{Kunish_book_2012}, \cite{allaire2007conception}, \cite{haslinger2003introduction}). Similar optimization problems 
can also be encountered when trying to identify some parameters in the PDE from measurements (see e.g. \cite{Chen_BV_1999},\cite{beretta2018reconstruction}). Nevertheless, all 
the aforementioned references deals with real elliptic and coercive problems. Since the Helmholtz equation is unfortunately a complex and non-coercive PDE, these results do not apply.

We also emphasize that the PDE-constrained optimization problem studied in the present paper falls into the class of so-called topology optimization problems. For practical applications involving Helmholtz-like equation as constraints, we refer to \cite{wadbro2010shape},\cite{bernland2018acoustic} where the shape of an acoustic horn is optimized to have better transmission efficiency and to \cite{jensen2005topology},\cite{christiansen2019acoustic},\cite{christiansen2019designing} for the topology optimization of photonic crystals where several different cost functions are considered.
Although there is a lot of applied and numerical studies of topology optimization problems involving Helmholtz equation, there are only few theoretical studies as pointed out in~\cite[p. 2]{Haslinger_2015}. 

Regarding the theoretical results from~\cite{Haslinger_2015}, {the authors} proved existence of optimal solution to their PDE-constrained optimization problem 
as well as the convergence of the discrete optimum toward the continuous ones. It is worth noting that, in \JS{this paper}, { a relative permittivity is considered as optimization parameter} 
and that the latter appears as a multiplication operator in the Helmholtz differential operator. Since in the present study the bathymetry is assumed to be non-smooth and is involved in the principal part of our heterogeneous Helmholtz equation, we can not rely on the theoretical results proved in~\cite{Haslinger_2015} to study our optimization problem. 
 }

{This paper is organized as follows: Section \ref{sec:wave-model} presents the two approximations of the free-surface incompressible Navier-Stokes system, namely the long-wave theory approach and next the reduction to waves with small amplitude, that lead us to consider a Helmholtz equation in heterogeneous media where the bathymetry} 
plays the role of a scatterer.
Under suitable assumptions on the cost functional and the admissible set of bathymetries, in Section \ref{sec:optimization} we are able to prove the continuity of the control-to-state mapping and the existence of an optimal solution, in addition to the continuity and boundedness of the resulting wave presented in Section \ref{sec:continuity}. The discrete optimization problem is discussed in Section \ref{sec:disc_pb}, studying the convergence to the discrete optimal solution as well as the convergence of a finite element approximation. Finally, we present some numerical results in Section \ref{sec:numerics}.


\section{Derivation of the wave model}\label{sec:wave-model}
We start from the Navier-Stokes equations to derive the governing PDE. However, due to its complexity, we introduce two approximations \cite{LM}: a small relative depth (\textit{Long wave theory}) combined with an infinitesimal wave amplitude (\textit{Small amplitude wave theory}). An asymptotic analysis on the relative depth shows that the vertical component of the depth-averaged velocity is negligible, obtaining the Saint-Venant equations. After neglecting its convective inertia terms and linearizing around the sea level, it results in a wave equation which depends on the bathymetry. Since a variable sea bottom can be seen as an obstacle, we reformulate the equation as a \textit{Scattering problem} involving the Helmholtz equation.

\subsection{From Navier-Stokes system to Saint-Venant equations}

For $t\geq 0$, we define the time-dependent region 
\[
\Omega_t = \{(x,z)\in \Omega\times\R\; \vert\; -z_b(x) \leq z \leq \eta(x,t)\} 
\]
where $\Omega$ is a bounded open set with Lipschitz boundary, $\eta(x,t)$ represents the water level and $-z_b(x)$ is the bathymetry or bottom topography, a time independent and negative function. The water height is denoted by $h = \eta +z_b$.\\

\begin{center}
\begin{tikzpicture}[>=stealth]
	\def\H{-2.5}		
	\def\L{10}   	
	\def\A{1}		
	
 	\draw[->,dotted] (0,0)  -- (\L,0) node [below] {$x$}; 
	\draw[->,dotted] (0,\H) --(0,\A) node[right] {$z$};	
	
	\draw[blue, x=0.01388cm, y=1cm,
		declare function={
		wave(\t)= 0.2 +0.5*cos(\t);
		}]
	plot [domain=0:360*2, samples=144, smooth] (\x,{wave(\x)});	
	\node[above left,blue] at (\L,0.8) {\small{Free surface}};

	\def\v{5.3}	
	\def\diff{0.1}	
	\def\utab{0.65}
	\def\ltab{\H*0.75}
    	\draw[<->] (\v,\utab) -- (\v,0);	
    	\draw[<->] (\v,0) -- (\v,\ltab);	
	\node[left] at (\v,\utab/2){$\eta(x,t)$};
	\node[left] at (\v,\ltab/2){$-z_b(x)$};
		
	\def\vh{6.5}
	\pgfmathsetmacro\h{\utab+\ltab};
	\draw[dashed,red] (\v,\utab) -- (\vh,\utab);		
	\draw[dashed,red] (\v,\ltab) -- (\vh,\ltab);		
	\draw[<->,red] (\vh,\utab) -- (\vh,\ltab);				
	\node[left] at (\vh,\h/2){$h$};		
		
	\draw[decorate,decoration={random steps,segment length=4mm},thick](0,\H) -- (\v,\ltab);
	\draw[decorate,decoration={random steps,segment length=4mm},thick](\v,\ltab) -- (\L,\H/3);
	\node[below left] at (\L,0.75*\H) {\small{Bottom}};
\end{tikzpicture}
\end{center}

In what follows, we consider an incompressible fluid of constant density (assumed to be equal to 1), governed by the Navier-Stokes system
\begin{equation}\left\{
\begin{aligned}
\dpart{\mathbf{u}}{t} + \left(\mathbf{u}\cdot \nabla\right)\mathbf{u} 
	&= \div{\sigma_{T}} +\mathbf{g} 
	&&\textrm{ in }\Omega_t,\\
\div{\mathbf{u}} 
	&= 0 
	&&\textrm{ in }\Omega_t,\\
\mathbf{u} 
	&= \mathbf{u}_0
	&&\textrm{ in }\Omega_0,
\end{aligned}\right. \label{Euler}
\end{equation}
where $\mathbf{u} = (u,v,w)^{\t}$ denotes the velocity of the fluid, $\mathbf{g} = (0,0,-g)^\t$ is the gravity and $\sigma_{T}$ is the total stress tensor, given by 
\[
\sigma_{T} = -p\mathbb{I} +\mu\left(\grad{\mathbf{u}}+ \grad{\mathbf{u}}^\t\right)
\]
with $p$ the pressure and $\mu$ the coefficient of viscosity.

To complete \eqref{Euler}, we require suitable boundary conditions. Given the outward normals
\[
n_s = \dfrac{1}{\sqrt{1+\abs{\grad{\eta}}^2}}
		\begin{pmatrix}-\grad{\eta}\\1 \end{pmatrix},\;
n_b = \dfrac{1}{\sqrt{1+\abs{\grad{z_b}}^2}}
		\begin{pmatrix} \grad{z_b} \\ 1\end{pmatrix},
\]	
to the free surface and bottom, respectively, we recall that the velocity of the two must be equal to that of the fluid: 
\begin{equation}\left\{
\begin{aligned}
\dpart{\eta}{t} -\mathbf{u}\cdot n_s 
	&= 0 
	&&\textrm{on } (x,\eta(x,t),t),\\
\mathbf{u}\cdot n_b
	&= 0 
	&&\textrm{on } (x,-z_b(x),t).
\end{aligned} \right.\label{flux}
\end{equation}
On the other hand, the stress at the free surface is continuous, whereas at the bottom we assume a no-slip condition
\begin{equation}\left\{
\begin{aligned}
\sigma_{T}\cdot n_s &= -p_a n_s 			
	&&\textrm{on } (x,\eta(x,t),t),  \\
(\sigma_{T} n_b)\cdot t_b &= 0 
	&&\textrm{on } (x,-z_b(x),t),
\end{aligned}\right. \label{stress}
\end{equation}
with $p_a$ the atmospheric pressure and $t_b$ an unitary tangent vector to $n_b$.

A long wave theory approach can then be developed to approximate the previous model by a Saint-Venant system~\cite{Perthame}.  Denoting by $H$ the relative depth and $L$ the characteristic dimension along the horizontal axis, this approach is based on the approximation $\varepsilon := \dfrac{H}{L}\ll 1$, leading to a hydrostatic pressure law for the non-dimensionalized Navier-Stokes system, and a vertical integration of the remaining equations. For the sake of completeness, details of this derivation in our case are given in Appendix. For a two-dimensional system~\eqref{Euler},  the resulting system is then
\begin{align}
\dpart{\eta}{t}\sqrt{1+(\varepsilon\delta)^2 \abs{\dpart{\eta}{x}}^2} +\dpart{(h_{\delta}\overline{u})}{x} 
	&= 0 \label{mass-av2} \\
\dpart{(h_{\delta}\overline{u})}{t} +\delta\dpart{(h_{\delta}\overline{u}^2)}{x}
	&= -h_{\delta}\dpart{\eta}{x} +\delta u(x,\delta\eta,t)\dpart{\eta}{t}\bigg(\sqrt{1+(\varepsilon\delta)^2\abs{\dpart{\eta}{x}}^2}-1\bigg) \nonumber\\
	&\qquad +\bigO(\varepsilon) +\bigO(\delta\varepsilon),\label{momentum-av2} 
\end{align}
where   $\delta := \dfrac{A}{H}$, $h_{\delta} = \delta\eta +z_b$ and $\overline{u}(x,t) := \dfrac{1}{h_{\delta}(x,t)}\int_{-z_b}^{\delta\eta}u(x,z,t)dz$. If 
$\varepsilon\rightarrow 0$, we recover the classical derivation of the one-dimensional Saint-Venant equations.

\subsection{Small amplitudes}
With respect to the classical Saint-Venant formulation, passing to the limit $\delta\rightarrow 0$ is equivalent to neglect the convective acceleration terms and linearize the system (\ref{mass-av2}-\ref{momentum-av2}) around the sea level $\eta = 0$. In order to do so, we rewrite the derivatives as
\[
\dpart{(h_{\delta}\overline{u})}{t} 
	= h_{\delta}\dpart{\overline{u}}{t} +\delta\dpart{\eta}{t}\overline{u},\;
\dpart{(h_{\delta}\overline{u})}{x}
	= \delta\dpart{(\eta\overline{u})}{x} +\dpart{(z_b\overline{u})}{x},
\] 
and then, taking $\varepsilon,\delta\rightarrow 0$ in (\ref{mass-av2}-\ref{momentum-av2}) yields
\begin{equation*}\left\{
\begin{aligned}
\dpart{\eta}{t} +\dpart{(z_b\overline{u})}{x}
	&= 0, \\
-\dpart{(z_b\overline{u})}{t} +z_b\dpart{\eta}{x}
	&= 0. 
\end{aligned}\right.
\end{equation*}
Finally, after deriving the first equation with respect to $t$ and replacing the second into the new expression, we obtain the wave equation for a variable bathymetry. 
All the previous computations hold for the three-dimensional system \eqref{Euler}. In this case, we obtain
\begin{equation}\label{wave-eq}
	\ddpart{\eta}{t} -\div{gz_b\grad{\eta}} = 0. 
\end{equation}

\subsection{Helmholtz formulation}
Equation \eqref{wave-eq} defines a time-harmonic field, whose solution has the form $\eta(x,t)=\Re{\psi_{tot}(x)\e{-\ii\omega t}}$, where the amplitude $\psi_{tot}$ satisfies 
\begin{equation} \label{ampli-helmholtz1}
	\omega^2 \psi_{tot} +\div{gz_b\grad{\psi_{tot}}} = 0.
\end{equation}

We wish to rewrite the equation above as a scattering problem. Since a variable bottom $z_b(x):= z_0 +\delta z_b(x)$ (with $z_0$ a constant describing a flat bathymetry and $\delta z_b$ a perturbation term) can be considered as an obstacle, we thus assume that $\delta z_b$ has a compact support in $\Omega$ and that $\psi_{tot}$ satisfies the so-called Sommerfeld radiation condition. In a bounded domain as $\Omega$, we impose the latter thanks to an impedance boundary condition (also known as first-order absorbing boundary condition), which ensures the existence and uniqueness of the solution \cite[p. 108]{Nedelec}. We then reformulate \eqref{ampli-helmholtz1} as 
\begin{equation}\left\{
\begin{aligned}
	\div{(1+q) \grad{\psi_{tot}}} +k_0^2\psi_{tot}  &= 0 && \textrm{ in }\Omega,\\
	\dn{(\psi_{tot}-\psi_{0})} -\ii k_0(\psi_{tot}-\psi_{0}) &= 0 && \textrm{ on }\partial\Omega, 
\end{aligned}\right. 
\label{ampli-helmholtz}
\end{equation}
where \JS{we have introduced the parameter $q(x)\vcentcolon=\frac{\delta z_b(x)}{z_0}$ which is assumed to be compactly supported in $\Omega$}, $k_0 \vcentcolon=\frac{\omega}{\sqrt{gz_0}}$, $\hat{n}$ the unit normal to $\partial \Omega$ and $\psi_{0}(x)=\mathrm{e}^{\ii k_0x\cdot \vec{d}}$ is an incident plane wave propagating in the direction $\vec{d}$ (such that $|\vec{d}|=1$).

Decomposing the total wave as $\psi_{tot} = \psi_{0} + \psi_{sc}$, where $\psi_{sc}$ represents an unknown scattered wave, we obtain the Helmholtz formulation
\begin{equation}\label{eq:ampli-helmholtz_sc}
\left\{
\begin{aligned}
	\div{(1+q) \grad{\psi}_{sc}} +k_0^2\psi_{sc}  &= -\div{q \grad{\psi_0}} && \textrm{ in }\Omega,\\
	\dn{\psi_{sc}} -\ii k_0\psi_{sc} &= 0 && \textrm{ on }\partial\Omega. 
\end{aligned}\right. 
\end{equation}
Its structure will be useful to prove the existence of a minimizer for a PDE-constrained functional, as discussed in the next section.

\section{Description of the optimization problem}\label{sec:optimization}
We are interested in studying the problem of a cost functional constrained by the weak formulation of a Helmholtz equation. The latter intends to generalize the equations considered so far, whereas the former indirectly affects the choice of the set of admissible controls. These can be discontinuous since they are included in the space of functions of bounded variations. In this framework, we treat the continuity and regularity of the associated control-to-state mapping, and the existence of an optimal solution to the optimization problem.

\subsection{Weak formulation}\label{sub:weak_form}

Let $\Omega\subset\R^2$ be a bounded open set with Lipschitz boundary. We consider the following general Helmholtz equation 
\begin{equation}\label{eq:Helmholtz}
\left\{
\begin{aligned}
	-\div{(1+q) \grad{\psi}} -k_0^2\psi  &= \div{q \grad{\psi_0}} && \textrm{ in }\Omega,\\
	(1+q)\dn{\psi} -\ii k_0\psi &= g -q\dn{\psi_0} && \textrm{ on }\partial\Omega, 
\end{aligned}\right. 
\end{equation}
where $g$ is a source term. We assume that $q\in L^{\infty}(\Omega)$ and that there exists $\alpha>0$ such that 
\begin{equation}\label{eq:hyp_q}
\mathrm{a.e.}\ x\in\Omega,\ 1+q(x)\geq \alpha.
\end{equation} 

\begin{rem}\label{rem:Link_general_Helmholtz_scatt}
Here we have generalized the models described in the previous section: if $q$ has a fixed compact support in $\Omega$, we have that the total wave $\psi_{tot}$ satisfying \eqref{ampli-helmholtz} is a solution to (\ref{eq:Helmholtz}) with $g=\dn{\psi_{0}} -\ii k_0\psi_{0}$ and no volumic right-hand side; whereas the scattered wave $\psi_{sc}$ satisfying \eqref{eq:ampli-helmholtz_sc} is a solution to \eqref{eq:Helmholtz} with $g=0$. All the proofs obtained in this broader setting still hold true for both problems.
\end{rem}

A weak formulation for \eqref{eq:Helmholtz} is given by 
\begin{equation}\label{eq:FV_Helmholtz}
a(q;\psi,\phi)=b(q;\phi),\ \forall \phi\in H^1(\Omega),
\end{equation}
where 
\begin{align}
a(q;\psi,\phi) 
	&\vcentcolon=\int_\Omega \left((1+q)\nabla\psi \cdot \nabla\overline{\phi}-k_0^2 \psi\overline{\phi}\right)\, dx-\ii k_0\int_{\partial\Omega} \psi\overline{\phi}\, d\sigma,\label{def:a}\\
b(q;\phi) 
	&\vcentcolon= -\int_\Omega q \nabla \psi_0\cdot \nabla\overline{\phi}\, dx +\inner{g}{\overline{\phi}}_{H^{-1/2},H^{1/2}}. \nonumber
\end{align}
Note that, thanks to Cauchy-Schwarz inequality, the sesquilinear form $a$ is continuous
\begin{align}
|a(q;\psi,\phi)|	&\leq C(\Omega,q,\alpha) (1+\norme{q}_{L^\infty(\Omega)})  \norme{\psi}_{1,k_0}\norme{\phi}_{1,k_0},\nonumber\\
\norme{\psi}^2_{1,k_0}&\vcentcolon=k_0^2\norme{\psi}_{L^2(\Omega)}^2+\alpha\norme{\nabla \psi}^2_{L^2(\Omega)},\nonumber
\end{align} 
where $C(\Omega,q,\alpha)>0$ is a generic constant. In addition, taking $\phi=\psi$ in the definition of $a$, it satisfies a G\aa rding inequality 
\begin{equation}\label{eq:Garding_ineq}
\Re{a(q;\psi,\psi)}+2k_0^2\norme{\psi}^2_{L^2(\Omega)} \geq \norme{\psi}^2_{1,k_0},
\end{equation}
and the well-posedness of Problem \eqref{eq:FV_Helmholtz} follows from the Fredholm Alternative. Finally, uniqueness holds for any $q\in L^\infty(\Omega)$ satisfying \eqref{eq:hyp_q} owning to \cite[Theorems 2.1, 2.4]{Graham_variable_2018}.


\subsection{Continuous optimization problem}

We are interested in solving the next PDE-constrained optimization problem
\begin{equation}\label{eq:Optim_pbm}
\begin{aligned}
\textrm{minimize } 	& J(q,\psi),\\
\textrm{subject to }	& (q,\psi)\in U_\Lambda\times H^1(\Omega), \textrm{ where } \psi \textrm{ satisfies } (\ref{eq:FV_Helmholtz}).
\end{aligned}
\end{equation} 
We now define \JS{the set $U_\Lambda$} of admissible $q$. We wish to find optimal $q$ that can have discontinuities and we thus cannot look for $q$ in some Sobolev spaces that are continuously embedded into $C^0(\overline{\Omega})$, even if such regularity is useful for proving \JS{existence of minimizers} (see e.g. \cite[Chapter VI]{Kunish_book_2012}, \cite[Theorem 4.1]{Bastide_2018}). 
To be able to find \JS{an} optimal $q$ satisfying (\ref{eq:hyp_q}) and having possible discontinuities, we follow \cite{Chen_BV_1999} and introduce the following set 
\[
U_\Lambda=\left\{ q\in BV(\Omega)\ \left| \ \alpha-1\leq q(x)\leq \Lambda\ a.e. \ x\in\Omega \right. \right\}.
\]
Above $\Lambda\geq \max\{\alpha -1,0\}$ and $BV(\Omega)$ is the set of functions with bounded variations \cite{BV_properties}, that is functions whose distributional gradient \JS{belongs} to the set $\mathcal{M}_\mathrm{b}(\Omega,\R^N)$ of bounded Radon \JS{measures}. Note that the piecewise constant functions over $\Omega$ belong to $U_\Lambda$.

Some useful properties of $BV(\Omega)$ can be found in \cite{BV_properties} and are recalled below for the sake of completeness. This is a Banach space for the norm (see \cite[p. 120, Proposition 3.2]{BV_properties})
\[
\norme{q}_{BV(\Omega)}\vcentcolon=\norme{q}_{L^1(\Omega)} + |D q|(\Omega),
\]
where $D$ is the distributional gradient and 
\begin{equation}\label{def:Dq}
|D q|(\Omega)=\sup\left\{\int_\Omega q\,  \div{\varphi}\, dx\ \left|\ \varphi\in \mathcal{C}^1_\mathrm{c}(\Omega,\R^2)\ \mathrm{and}\ \norme{\varphi}_{L^\infty(\Omega)}\leq 1 \right.\right\},
\end{equation}
is the variation of $q$ (see \cite[p. 119, Definition 3.4]{BV_properties}).

The weak$^*$ convergence in $BV(\Omega)$, denoted by
\[
q_n\rightharpoonup q,\ \mathrm{weak}^* \ \mathrm{in}\ BV(\Omega),
\]
means that 
\[
q_n\rightarrow q\ \mathrm{in}\ L^1(\Omega)\ \mathrm{and}\ D q_n \rightharpoonup D q\ \mathrm{in} \ \mathcal{M}_\mathrm{b}(\Omega,\R^N).
\]
Also, in a two-dimensional setting, the continuous embedding 
$ BV(\Omega)\subset L^1(\Omega) $ is compact. We finally recall that the application $q\in BV(\Omega)\mapsto |Dq|(\Omega)\in \R^+$ is lower semi-continuous with respect to the weak$^*$ topology of $BV$.
Hence, for any sequence $q_n\rightharpoonup q$ in $BV(\Omega)$, one has
$$
|Dq|(\Omega)\leq \liminf_{n\to+\infty} |Dq_n|(\Omega).
$$

The set $U_{\Lambda}$ is a closed, weakly$^*$ closed and convex subset of $BV(\Omega)$. However, since its elements are not necessarily bounded in the $BV$-norm, \JS{we add a penalizing distributional gradient term to the cost functional $J(q,\psi)$ to prove the existence of a minimizer to Problem \eqref{eq:Optim_pbm}.
In this way, we introduce the set of admissible parameters}
\[
U_{\Lambda,\kappa}=\left\{ q\in U_{\Lambda}\ | \ |Dq|(\Omega)\leq \kappa\right\}
\]
which also possesses the aforementioned properties. 
\JS{Note that choosing $U_{\Lambda}$ or $U_{\Lambda,\kappa}$} affects the convergence analysis of the discrete optimization problem, topic discussed in Section \ref{sec:disc_pb}.


\begin{rem}\label{rem:U_epsilon}
In this paper, we are interested in computing either the total wave satisfying \eqref{ampli-helmholtz} or the scattered wave solution to Equation \eqref{eq:ampli-helmholtz_sc}. 
Since this \JS{requires} to work with $q$ having a fixed compact support in $\Omega$, we also introduce
the following set of admissible parameters  
\[
\widetilde{U}_{\varepsilon}\vcentcolon=\left\{ q\in U\ | q(x)=0\ \mathrm{for\ a.e}\ x\in\mathcal{O}_\varepsilon \right\},\ \mathcal{O}_\varepsilon=\left\{x\in\Omega\ |\ \mathrm{dist}(x,\partial\Omega)\leq \varepsilon \right\},
\]
which is a set of bounded functions with bounded variations that have a fixed support in $\Omega$.
We emphasize that \JS{this set} is a convex, closed and weak-$*$ closed subset of $BV(\Omega)$. As a \JS{consequence}, all the theorems we are going to prove also hold for this set of admissible parameters.
\end{rem}
\subsection{Continuity of the control-to-state mapping}
In this section, \JS{we establish} the continuity of the application $q\in U\mapsto \psi(q)\in H^1(\Omega)$ where $\psi(q)$ satisfies Problem (\ref{eq:FV_Helmholtz}).
We assume that $U\subset BV(\Omega)$ is \JS{a given weakly$^*$ closed set satisfying} 
\[
\forall q\in U,\ \mathrm{a.e.\ }x\in\Omega,\ \alpha-1\leq q(x)\leq \Lambda. 
\] 
Note that both $U_\Lambda$, $U_{\Lambda,\kappa}$ and $\widetilde{U}_{\varepsilon}$ (see Remark \ref{rem:U_epsilon}) also satisfy these two assumptions.
The next result consider the dependance of the stability constant with respect to the optimization parameter $q$.

\begin{theorem}\label{thm:Uniform_H1_bound}
Assume that $q\in U$ and $\psi\in H^1(\Omega)$. Then there exists a constant $C_{\mathrm{s}}(k_0)>0$ that does not depend on $q$ such that 
\begin{equation}\label{eq:inf_sup_cont}
\norme{\psi}_{1,k_0}\leq C_{\mathrm{s}}(k_0) \sup_{\norme{\phi}_{1,k_0}=1}|a(q;\psi,\phi)|,
\end{equation}
where the constant $C_{\mathrm{s}}(k_0)>0$ only depend on the wavenumber and on $\Omega$. In addition, if $\psi$ is the solution to (\ref{eq:FV_Helmholtz}) then it satisfies the bound
\begin{equation}\label{eq:Uniform_H1_bound}
\norme{\psi}_{1,k_0}\leq C_{\mathrm{s}}(k_0)C(\Omega)\max\{k_0^{-1},\alpha^{-1/2}\}\left(\norme{q}_{L^\infty(\Omega)}\norme{\nabla \psi_0}_{L^2(\Omega)}+\norme{g}_{H^{-1/2}(\partial\Omega)} \right),
\end{equation}
where $C(\Omega)>0$ only depends on the domain. 
\end{theorem}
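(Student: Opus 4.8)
The plan is to prove the inf--sup bound \eqref{eq:inf_sup_cont} first, by a contradiction/compactness argument that keeps track of the fact that the G\aa rding inequality \eqref{eq:Garding_ineq} is uniform in $q$, and then deduce \eqref{eq:Uniform_H1_bound} by simply estimating the antilinear form $b(q;\cdot)$ and invoking the uniqueness already recorded after \eqref{eq:Garding_ineq}. For the first part, suppose \eqref{eq:inf_sup_cont} fails: then there are sequences $q_n\in U$ and $\psi_n\in H^1(\Omega)$ with $\norme{\psi_n}_{1,k_0}=1$ and $\sup_{\norme{\phi}_{1,k_0}=1}|a(q_n;\psi_n,\phi)|\to 0$. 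Since $U$ is bounded in $L^\infty$ (all $q_n$ satisfy $\alpha-1\le q_n\le\Lambda$) and weakly$^*$ closed, after extracting a subsequence we have $q_n\rightharpoonup q$ weak$^*$ in $BV$, hence $q_n\to q$ in $L^1(\Omega)$ and a.e.\ (up to a further subsequence), with $q\in U$; and $\psi_n\rightharpoonup\psi$ weakly in $H^1(\Omega)$, with $\psi_n\to\psi$ strongly in $L^2(\Omega)$ (Rellich) and, by compactness of the trace operator $H^1(\Omega)\to L^2(\partial\Omega)$, also strongly in $L^2(\partial\Omega)$.

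Next I would pass to the limit in the variational identity. Testing $a(q_n;\psi_n,\phi)\to 0$ against a fixed $\phi\in H^1(\Omega)$: the term $k_0^2\int_\Omega\psi_n\overline\phi$ converges by strong $L^2$ convergence of $\psi_n$, the boundary term $\ii k_0\int_{\partial\Omega}\psi_n\overline\phi$ converges by strong $L^2(\partial\Omega)$ convergence, and for the principal part $\int_\Omega(1+q_n)\nabla\psi_n\cdot\nabla\overline\phi$ one writes $(1+q_n)\nabla\overline\phi\to(1+q)\nabla\overline\phi$ strongly in $L^2(\Omega)$ (dominated convergence, using $\norme{q_n}_{L^\infty}\le\Lambda$ and $q_n\to q$ a.e.) while $\nabla\psi_n\rightharpoonup\nabla\psi$ weakly in $L^2$, so the product converges. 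Hence $a(q;\psi,\phi)=0$ for all $\phi\in H^1(\Omega)$, and by the uniqueness result (\cite[Theorems 2.1, 2.4]{Graham_variable_2018}, valid for any $q$ satisfying \eqref{eq:hyp_q}) we get $\psi=0$. Finally I would upgrade the weak convergence to strong convergence in the $\norme{\cdot}_{1,k_0}$-norm to reach a contradiction with $\norme{\psi_n}_{1,k_0}=1$: from the G\aa rding inequality, $\norme{\psi_n}^2_{1,k_0}\le\Re{a(q_n;\psi_n,\psi_n)}+2k_0^2\norme{\psi_n}^2_{L^2(\Omega)}$; the first term is bounded by $\sup_{\norme{\phi}_{1,k_0}=1}|a(q_n;\psi_n,\phi)|\to 0$ and the second by $2k_0^2\norme{\psi_n}^2_{L^2(\Omega)}\to 2k_0^2\norme{\psi}^2_{L^2(\Omega)}=0$, so $\norme{\psi_n}_{1,k_0}\to 0$, contradicting the normalization. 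This proves \eqref{eq:inf_sup_cont} with $C_{\mathrm{s}}(k_0)$ depending only on $k_0$ and $\Omega$ (through Rellich and the trace inequality).

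For \eqref{eq:Uniform_H1_bound}, let $\psi$ solve \eqref{eq:FV_Helmholtz}. By \eqref{eq:inf_sup_cont}, $\norme{\psi}_{1,k_0}\le C_{\mathrm{s}}(k_0)\sup_{\norme{\phi}_{1,k_0}=1}|a(q;\psi,\phi)|=C_{\mathrm{s}}(k_0)\sup_{\norme{\phi}_{1,k_0}=1}|b(q;\phi)|$. It remains to estimate $|b(q;\phi)|\le\norme{q}_{L^\infty(\Omega)}\norme{\nabla\psi_0}_{L^2(\Omega)}\norme{\nabla\phi}_{L^2(\Omega)}+\norme{g}_{H^{-1/2}(\partial\Omega)}\norme{\phi}_{H^{1/2}(\partial\Omega)}$ and to control $\norme{\nabla\phi}_{L^2(\Omega)}$ and $\norme{\phi}_{H^{1/2}(\partial\Omega)}$ by $\norme{\phi}_{1,k_0}$: the former gives a factor $\alpha^{-1/2}$ from the definition of $\norme{\cdot}_{1,k_0}$, and the latter gives, by the trace inequality, a factor $C(\Omega)\max\{k_0^{-1},\alpha^{-1/2}\}$ since $\norme{\phi}_{H^{1/2}(\partial\Omega)}\le C(\Omega)\norme{\phi}_{H^1(\Omega)}$ and $\norme{\phi}_{H^1(\Omega)}\le\max\{k_0^{-1},\alpha^{-1/2}\}\norme{\phi}_{1,k_0}$. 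Combining yields \eqref{eq:Uniform_H1_bound}.

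The main obstacle is the passage to the limit in the principal part $\int_\Omega(1+q_n)\nabla\psi_n\cdot\nabla\overline\phi\,dx$, i.e.\ handling a product of a weakly convergent sequence $\nabla\psi_n$ with the variable, merely bounded coefficient $q_n$; the key point that makes it work is that the test function $\phi$ is fixed, so $(1+q_n)\nabla\overline\phi$ converges \emph{strongly} in $L^2$ by dominated convergence, and no compensated-compactness or div--curl argument is needed. A secondary subtlety is making sure the constant $C_{\mathrm{s}}(k_0)$ genuinely does not depend on $q$ (nor on $\alpha$, $\Lambda$ beyond fixing the class $U$): this is guaranteed because the only ingredients used — the uniform G\aa rding inequality, Rellich compactness, compactness of the trace, and the uniqueness theorem — are all uniform over $q\in U$.
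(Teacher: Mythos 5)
Your overall route is the same as the paper's: argue \eqref{eq:inf_sup_cont} by contradiction, extract convergent subsequences, pass to the limit in $a(q_n;\psi_n,\phi)$, invoke the uniqueness result of Graham et al.\ to get $\psi_\infty=0$, and then use the G\aa rding inequality \eqref{eq:Garding_ineq} together with the strong $L^2$ convergence to contradict $\norme{\psi_n}_{1,k_0}=1$; the a priori bound \eqref{eq:Uniform_H1_bound} then follows exactly as in the paper by estimating $b(q;\phi)$ with Cauchy--Schwarz, the trace inequality and the factor $\max\{k_0^{-1},\alpha^{-1/2}\}$. Your way of passing to the limit in the principal term (pairing the weak $L^2$ convergence of $\nabla\psi_n$ with the \emph{strong} $L^2$ convergence of $(1+q_n)\nabla\overline{\phi}$ obtained by dominated convergence) is a slightly cleaner, equivalent variant of the paper's splitting into $((q_n-q_\infty)\nabla\psi_n,\nabla\overline\phi)$ and $(q_\infty\nabla(\psi_n-\psi_\infty),\nabla\overline\phi)$ with the $\sqrt{|q_n-q_\infty|}$ trick; both hinge on the a.e.\ convergence of $q_n$.

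There is, however, one genuine gap in how you obtain that a.e.\ convergence. You claim that because $U$ is bounded in $L^\infty$ and weakly$^*$ closed, a subsequence of $(q_n)$ converges weak$^*$ in $BV(\Omega)$, hence in $L^1(\Omega)$ and a.e. But $L^\infty$-boundedness gives no control on the total variation $|Dq_n|(\Omega)$, and weak$^*$ compactness in $BV$ (equivalently, the compact embedding $BV(\Omega)\subset L^1(\Omega)$) requires a uniform $BV$-norm bound; the paper explicitly notes that elements of $U_\Lambda$ need not be bounded in the $BV$-norm. Indeed, a merely $L^\infty$-bounded sequence (think of rapidly oscillating $q_n$) admits no a.e.\ convergent subsequence, so your dominated-convergence step for $(1+q_n)\nabla\overline{\phi}$ would fail and only weak-$*$ $L^\infty$ limits would be available, which is not enough to pass to the limit in the product with the weakly convergent $\nabla\psi_n$. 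The paper's proof avoids this by building the bound $\norme{q_n}_{BV(\Omega)}\leq M$ into the contradiction hypotheses (so the compactness comes from $BV\subset L^1$); with that bound added to your setup (or for admissible sets such as $U_{\Lambda,\kappa}$, where it is automatic), the rest of your argument is correct and coincides with the paper's.
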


\begin{proof}
\JS{The existence and uniqueness of a solution to Problem \eqref{eq:FV_Helmholtz} follows from \cite[Theorems 2.1, 2.4]{Graham_variable_2018}.}

The proof of \eqref{eq:inf_sup_cont} proceed by contradiction assuming \JS{this inequality to be} false.  Therefore, we suppose there exist sequences $(q_n)_n\subset U$ and $(\psi_n)_n\subset H^1(\Omega)$
such that $\norme{q_n}_{BV(\Omega)}\leq M$, $\norme{\psi_n}_{1,k_0}=1$ and
\begin{equation}\label{eq:Contradiction_stab}
\lim_{n\to+\infty}\sup_{\norme{\phi}_{1,k_0}=1}|a(q_n;\psi_n,\phi)|=0.
\end{equation}
The compactness of the embeddings $BV(\Omega)\subset L^1(\Omega)$ and $H^1(\Omega)\subset L^2(\Omega)$ yields the existence of a subsequence (still denoted $(q_n,\psi_n)$) such that  
\begin{equation}\label{eq:Convergence_contradiction}
\psi_n\rightharpoonup \psi_\infty\ \mathrm{in}\ H^1(\Omega),\ 
\psi_n\rightarrow \psi_\infty\ \mathrm{in}\ L^2(\Omega)\ \mathrm{and}\ q_n\rightarrow q_\infty\in U\ \mathrm{in}\ L^1(\Omega).
\end{equation}
Compactness of the trace operator implies that $\displaystyle{\lim_{n\to+\infty}\psi_n|_{\partial\Omega}=\psi_\infty|_{\partial\Omega}}$ holds strongly in $L^2(\partial\Omega)$ and thus, from \eqref{eq:Convergence_contradiction} we get
\begin{align*}
\lim_{n\to+\infty} \int_\Omega k_0^2 \psi_n\overline{\phi}\, dx+\ii k_0\int_{\partial\Omega} \psi_n\overline{\phi}\, d\sigma
	&=\!\int_\Omega k_0^2 \psi_\infty\overline{\phi}\, dx+\ii k_0\int_{\partial\Omega} \psi_\infty\overline{\phi}\, d\sigma,\ \forall\ v\in H^1(\Omega), \\
\lim_{n\to+\infty}\int_\Omega \nabla \psi_n\cdot \nabla\overline{\phi}\, dx
	&=\!\int_\Omega \nabla \psi_\infty\cdot \nabla\overline{\phi}\, dx.
\end{align*}
\JS{We now pass to the limit in the term of $a$ that involves $q_n$, see~\eqref{def:a}}. 
We start from 
\begin{align*}
(q_n\nabla \psi_n,\nabla\overline{\phi})_{L^2(\Omega)}-(q_\infty\nabla \psi_\infty,\nabla\overline{\phi})_{L^2(\Omega)}
	&= ((q_n-q_\infty)\nabla \psi_n,\nabla \overline{\phi})_{L^2(\Omega)}\\
	&\qquad +(q_\infty\nabla(\psi_n-\psi_\infty),\nabla\overline{\phi})_{L^2(\Omega)},
\end{align*}
and use Cauchy-Schwarz inequality to get
\begin{align*} 
\int_\Omega q_n \nabla \psi_n\cdot & \nabla\overline{\phi}\, dx 
	-\int_\Omega q_\infty\nabla \psi_\infty\cdot \nabla\overline{\phi}\,dx \\		&\leq \left|((q_n-q_\infty)\nabla \psi_n,\nabla \overline{\phi})_{L^2(\Omega)} \right| 
	+\left|(q_\infty\nabla(\psi_n-\psi_\infty),\nabla\overline{\phi})_{L^2(\Omega)}\right| \\
	&\leq \norme{\sqrt{|q_n-q_\infty|}\nabla \phi}_{L^2(\Omega)}\norme{\sqrt{|q_n-q_\infty|}\nabla \psi_n}_{L^2(\Omega)} \\
	&\qquad+\left|(q_\infty\nabla(\psi_n-\psi_\infty),\nabla\overline{\phi})_{L^2(\Omega)}\right| \\
	&\leq 2\dfrac{\sqrt{\Lambda}}{\sqrt{\alpha}}\norme{\psi_n}_{1,k_0}  \norme{\sqrt{|q_n-q_\infty|}\nabla \phi}_{L^2(\Omega)}+\left|(\nabla(\psi_n-\psi_\infty),q_\infty\nabla\overline{\phi})_{L^2(\Omega)}\right|.
\end{align*}
The right term above goes to $0$ owning to $q_\infty\in L^\infty(\Omega)$ and \eqref{eq:Convergence_contradiction}. For the other term, since $q_n\to q_\infty$ strongly in $L^1$, 
we can extract another subsequence $(q_{n_k})_k$ such that $q_{n_k}\to q_\infty$ pointwise almost everywhere in $\Omega$. Also, $\sqrt{|q_n-q_\infty|}|\nabla \phi|^2\leq 2\sqrt{\Lambda}|\nabla \phi|^2\in L^1(\Omega)$ and the Lebesgue dominated convergence theorem then yields
\[
\lim_{k\to+\infty}\norme{\sqrt{|q_{n_k}-q_\infty|}\nabla \phi}_{L^2(\Omega)}= 0.
\] 
This gives that (see also \cite[Equation (2.4)]{Chen_BV_1999})
\begin{equation}\label{eq:Convergence_Terme_ordre_2}
\lim_{k\to+\infty} (q_{n_k}\nabla \psi_{n_k},\nabla\overline{\phi})_{L^2(\Omega)}=(q_\infty\nabla \psi_\infty,\nabla\overline{\phi})_{L^2(\Omega)},\ \forall \phi\in H^1(\Omega).
\end{equation}
Finally, gathering \eqref{eq:Convergence_Terme_ordre_2} together with \eqref{eq:Contradiction_stab} yields
\[
0=\lim_{k\to+\infty}a(q_{n_k};\psi_{n_k},\phi)=a(q_\infty,\psi_\infty,\phi),\ \forall \phi\in H^1(\Omega),
\]
and the uniqueness result \cite[Theorems 2.1, 2.4]{Graham_variable_2018} shows that $\psi_\infty=0$ thus the whole sequence actually converges to $0$. To get our contradiction, it remains to show that 
$\norme{\nabla \psi_n}_{L^2(\Omega)}$ converges to $0$ as well. From the G\aa rding inequality \eqref{eq:Garding_ineq}, we have 
\[
\norme{\psi_n}^2_{1,k_0}\leq  \Re{a(q_n;\psi_n,\psi_n)}+2k_0^2\norme{\psi_n}^2_{L^2(\Omega)}
\xrightarrow[n\to+\infty]{} 0,
\]
where we used \eqref{eq:Contradiction_stab} and the strong $L^2$ convergence of $\psi_n$ towards $\psi_\infty=0$. Finally one gets $\Lim{n\to+\infty}\norme{\psi_n}_{1,k_0}=0$ which contradicts $\norme{\psi_n}_{1,k_0}=1$ and gives the desired result.

Applying then \eqref{eq:inf_sup_cont} to the solution to \eqref{eq:FV_Helmholtz} finally yields
\begin{align*}
\norme{\psi}_{1,k_0}
	&\leq C_{\mathrm{s}}(k_0) \sup_{\norme{\phi}_{1,k_0}=1}|a(q;\psi,\phi)|
	\leq C_{\mathrm{s}}(k_0) \sup_{\norme{\phi}_{1,k_0}=1} |b(q;\phi)| \\
	& \leq C_{\mathrm{s}}(k_0)\sup_{\norme{\phi}_{1,k_0}=1}\left( \norme{q}_{L^\infty(\Omega)}\norme{\nabla \psi_0}_{L^2(\Omega)}\norme{\nabla \phi}_{L^2(\Omega)}+\norme{g}_{H^{-1/2}(\partial\Omega)}\norme{\phi}_{H^{1/2}(\partial\Omega)}\right) \\	
	& \leq C_{\mathrm{s}}(k_0)C(\Omega)\max\{k_0^{-1},\alpha^{-1/2}\}\left(\norme{q}_{L^\infty(\Omega)}\norme{\nabla \psi_0}_{L^2(\Omega)}+\norme{g}_{H^{-1/2}(\partial\Omega)} \right),
\end{align*}
where $C(\Omega)>0$ comes from the trace inequality. 
\end{proof}

\begin{rem}
Let us consider a more general version of Problem \eqref{eq:Helmholtz}, given by 
\begin{equation*}
\left\{
\begin{aligned}
	-\div{(1+q) \grad{\psi}} -k_0^2\psi  &= F && \textrm{ in }\Omega,\\
	(1+q)\dn{\psi} -ik_0\psi &= G && \textrm{ on }\partial\Omega. 
\end{aligned}\right. 
\end{equation*}
We emphasize that the estimation of the stability constant $C_{\mathrm{s}}(k_0)$ with respect to the wavenumber have been obtained for $(F,G)\in L^2(\Omega)\times L^2(\partial\Omega)$ for $q=0$ in \cite{Hetmaniuk_2007} and for $q\in \mathrm{Lip}(\Omega)$ satisfying \eqref{eq:hyp_q} in \cite{Barucq_2017,Graham_variable_2018,Graham_variable_2018_2}. Since their proofs rely on Green, Rellich and Morawetz identities, they do not extend to the case $(F,G)\in \left(H^1(\Omega)\right)^\prime\times H^{-1/2}(\partial\Omega)$ but such cases can be tackled as it is done in \cite[p.10, Theorem 2.5]{Esterhazy_2012}. The case of Lipschitz $q$ has been studied in \cite{Brown_Helmholtz_Lip}.
As a result, the dependance of the stability constant with respect to $q$, in the case $q\in U$ and $(F,G)\in \left(H^1(\Omega)\right)^\prime\times H^{-1/2}(\partial\Omega)$, does not seem to have been tackled so far to the best of our knowledge.
\end{rem}
\bigskip

\begin{rem}[$H^1$-bounds for the total and scattered waves]\label{rem:H_1_bounds_total_sc_waves}
From Remark \ref{rem:Link_general_Helmholtz_scatt}, we obtain that the total wave $\psi_{tot}$  and the scattered wave $\psi_{sc}$ are solutions to \eqref{eq:FV_Helmholtz}, with respective right hand sides
\[
b_{tot}(q;\phi)=\int_{\partial\Omega} (\dn{\psi_0} -\ii k_0\psi_0)\overline{\phi}\, d\sigma, \;\;
b_{sc}(q;\phi)=-\int_{\Omega}q\nabla\psi_{0}\cdot\nabla{\overline\phi}\, dx.
\]
As a result of Theorem \ref{thm:Uniform_H1_bound} and the continuity of the trace, we have
\begin{align*}
\norme{\psi_{tot}}_{1,k_0}
	&\leq C(\Omega) C_\mathrm{s}(k_0)k_0 \max\{k_0^{-1},\alpha^{-1/2}\}, \\
\norme{\psi_{sc}}_{1,k_0} 
	&\leq C_\mathrm{s}(k_0)\alpha^{-1/2}\norme{q}_{L^\infty(\Omega)} \norme{\nabla \psi_0}_{L^2(\Omega)}
	\leq k_0C_\mathrm{s}(k_0)\alpha^{-1/2}\norme{q}_{L^\infty(\Omega)} \sqrt{|\Omega|}.
\end{align*}
\end{rem}

%

We can now prove some regularity for the control-to-state mapping. 

\begin{theorem}\label{thm:CTS_mapping}
Let $(q_n)_n\subset U$ be a sequence satisfying $\norme{q_n}_{BV(\Omega)}\leq M$ and whose weak$^*$ limit in $BV(\Omega)$ is denoted by $q_\infty$. Let $(\psi(q_n))_n$ be the sequence of weak solution to Problem (\ref{eq:FV_Helmholtz}). Then $\psi(q_n)$ converges strongly in $H^1(\Omega)$ towards $\psi(q_\infty)$. In other words, the mapping 
\[
q\in (U_\Lambda,\mathrm{weak}^*)\mapsto \psi(q)\in (H^1(\Omega),\mathrm{strong}), 
\]
is continuous.
\end{theorem}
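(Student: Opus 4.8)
The plan is to combine the uniform a priori bound of Theorem~\ref{thm:Uniform_H1_bound} with the compactness of the embeddings $BV(\Omega)\subset L^1(\Omega)$ and $H^1(\Omega)\subset L^2(\Omega)$, to identify the weak limit of $(\psi(q_n))_n$ by the uniqueness result, and finally to upgrade weak convergence to strong convergence in $H^1(\Omega)$ by means of the G\aa rding inequality~\eqref{eq:Garding_ineq}. Since the target carries the strong topology, it suffices to show that from any subsequence of $(\psi(q_n))_n$ one can extract a further subsequence converging strongly in $H^1(\Omega)$ to $\psi(q_\infty)$: as the limit is always the same, the whole sequence then converges. So I would fix such a subsequence, keep the notation $(q_n,\psi_n)$ with $\psi_n\vcentcolon=\psi(q_n)$, and argue in two steps.

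First I would establish compactness and identify the limit. Since $(q_n)_n$ is bounded in $L^\infty(\Omega)$ (uniformly in $n$, by $\alpha-1\leq q_n\leq\Lambda$) and $\psi_0,g$ are fixed, Theorem~\ref{thm:Uniform_H1_bound} shows that $(\psi_n)_n$ is bounded in $H^1(\Omega)$; extracting once more I may assume $\psi_n\rightharpoonup\psi_*$ weakly in $H^1(\Omega)$, $\psi_n\to\psi_*$ strongly in $L^2(\Omega)$ and in $L^2(\partial\Omega)$ (Rellich and compactness of the trace operator), $q_n\to q_\infty$ strongly in $L^1(\Omega)$ --- this being part of the definition of weak$^*$ convergence in $BV$ --- and, after a last extraction, $q_n\to q_\infty$ a.e.\ in $\Omega$. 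I would then pass to the limit in $a(q_n;\psi_n,\phi)=b(q_n;\phi)$ for a fixed $\phi\in H^1(\Omega)$: the zeroth-order and boundary terms converge by the strong $L^2(\Omega)$ and $L^2(\partial\Omega)$ convergence of $\psi_n$; the term $\int_\Omega\nabla\psi_n\cdot\nabla\overline\phi\,dx$ by weak $H^1$ convergence; the principal-part term $\int_\Omega q_n\nabla\psi_n\cdot\nabla\overline\phi\,dx$ exactly as in the proof of Theorem~\ref{thm:Uniform_H1_bound}, through the $\sqrt{|q_n-q_\infty|}$ splitting and dominated convergence (cf.~\eqref{eq:Convergence_Terme_ordre_2}); and $b(q_n;\phi)\to b(q_\infty;\phi)$ because $\nabla\psi_0\in L^\infty(\Omega)$ and $q_n\to q_\infty$ in $L^2(\Omega)$ (which follows from the $L^1$ convergence and the uniform $L^\infty$ bound). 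This yields $a(q_\infty;\psi_*,\phi)=b(q_\infty;\phi)$ for all $\phi$, hence $\psi_*=\psi(q_\infty)$ by the uniqueness part of \cite[Theorems~2.1, 2.4]{Graham_variable_2018}.

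Next I would upgrade to strong convergence. Applying the G\aa rding inequality~\eqref{eq:Garding_ineq} with $q=q_n$ to $\psi_n-\psi_*$ gives
\[
\norme{\psi_n-\psi_*}^2_{1,k_0}\leq\Re{a(q_n;\psi_n-\psi_*,\psi_n-\psi_*)}+2k_0^2\norme{\psi_n-\psi_*}^2_{L^2(\Omega)},
\]
whose last term tends to $0$. Using that $\psi_n$ solves~\eqref{eq:FV_Helmholtz}, I would write
\[
a(q_n;\psi_n-\psi_*,\psi_n-\psi_*)=b(q_n;\psi_n-\psi_*)-a(q_n;\psi_*,\psi_n-\psi_*),
\]
and check that both terms vanish in the limit. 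The term $b(q_n;\psi_n-\psi_*)$ does, since the trace of $\psi_n-\psi_*$ converges weakly to $0$ in $H^{1/2}(\partial\Omega)$ while $g\in H^{-1/2}(\partial\Omega)$, and $q_n\nabla\psi_0\to q_\infty\nabla\psi_0$ strongly in $L^2(\Omega)$ while $\nabla(\psi_n-\psi_*)\rightharpoonup 0$ weakly in $L^2(\Omega)$. The term $a(q_n;\psi_*,\psi_n-\psi_*)$ does as well: its zeroth-order and boundary contributions vanish by strong $L^2(\Omega)$ and $L^2(\partial\Omega)$ convergence of $\psi_n-\psi_*$, and $(1+q_n)\nabla\psi_*\to(1+q_\infty)\nabla\psi_*$ strongly in $L^2(\Omega)$ --- by dominated convergence along the a.e.-convergent subsequence, with $|\nabla\psi_*|^2\in L^1(\Omega)$ as majorant --- which, paired with the weakly null $\nabla(\psi_n-\psi_*)$, gives $0$. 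Hence $\norme{\psi_n-\psi_*}_{1,k_0}\to 0$, and the subsequence argument concludes the proof.

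The hard part will be this last upgrade from weak to strong $H^1$ convergence: no strong bound on $\nabla\psi_n$ is available, and $q_n$ converges only in $L^1(\Omega)$ --- hence, being bounded, in every $L^p(\Omega)$ with $p<+\infty$ --- but not in $L^\infty(\Omega)$. What makes it work is that, in the energy identity produced by the G\aa rding inequality, the coefficient $q_n$ is tested only against the \emph{fixed} gradients $\nabla\psi_*\in L^2(\Omega)$ and $\nabla\psi_0\in L^\infty(\Omega)$, never against $\nabla\psi_n$; therefore $q_n\nabla\psi_*$ converges strongly in $L^2(\Omega)$ by dominated convergence, and pairing it with the weakly null sequence $\nabla(\psi_n-\psi_*)$ kills the critical term, after which the G\aa rding inequality trades control of the sesquilinear form for control of the full norm $\norme{\cdot}_{1,k_0}$, up to an $L^2$ remainder removed by compactness. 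The only other mildly technical point --- passing to the limit in $\int_\Omega q_n\nabla\psi_n\cdot\nabla\overline\phi\,dx$ --- has already been settled in the proof of Theorem~\ref{thm:Uniform_H1_bound}.
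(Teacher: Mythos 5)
Your proposal is correct. Its first half — the uniform bound from Theorem \ref{thm:Uniform_H1_bound} using the uniform $L^\infty$ bound on $q_n$, extraction of subsequences with $\psi_n\rightharpoonup\psi_*$ in $H^1$, passage to the limit in the principal part via the $\sqrt{|q_n-q_\infty|}$ splitting of \eqref{eq:Convergence_Terme_ordre_2}, and identification $\psi_*=\psi(q_\infty)$ by uniqueness (plus the subsequence-of-subsequence reduction, which the paper leaves implicit) — coincides with the paper's argument. Where you genuinely diverge is the upgrade to strong convergence. The paper tests the equation with $\psi(q_n)$ itself, obtains $\lim_n a(q_n;\psi(q_n),\psi(q_n))=a(q_\infty;\psi(q_\infty),\psi(q_\infty))$ from the convergence of $b(q_n;\psi(q_n))$, deduces convergence of the weighted energies $\norme{\sqrt{1+q_n}\nabla\psi(q_n)}_{L^2(\Omega)}$, upgrades this to strong $L^2$ convergence of $\sqrt{1+q_n}\nabla\psi(q_n)$ (a norm-plus-weak-convergence step it does not spell out), and finally recovers $\nabla\psi(q_n)\to\nabla\psi(q_\infty)$ by dividing by $\sqrt{1+q_n}$ and using a.e.\ convergence with dominated convergence. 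You instead apply the G\aa rding inequality \eqref{eq:Garding_ineq} directly to the difference $\psi_n-\psi_*$ and annihilate $a(q_n;\psi_n-\psi_*,\psi_n-\psi_*)=b(q_n;\psi_n-\psi_*)-a(q_n;\psi_*,\psi_n-\psi_*)$ by pairing strongly convergent quantities ($q_n\nabla\psi_0$ and $(1+q_n)\nabla\psi_*$, via boundedness of $q_n$ and dominated convergence along the a.e.-convergent subsequence) against the weakly null $\nabla(\psi_n-\psi_*)$, the boundary and zeroth-order terms dying by compactness of the trace and of $H^1\subset L^2$. Your route is more direct — it bypasses the Radon--Riesz-type step and the division by $\sqrt{1+q_n}$, and it pinpoints why the argument works (the rough coefficient only ever multiplies fixed gradients); it is in fact closer in spirit to how the paper uses G\aa rding inside the proof of Theorem \ref{thm:Uniform_H1_bound}. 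The paper's route yields in passing the strong $L^2$ convergence of the fluxes $\sqrt{1+q_n}\nabla\psi(q_n)$, a slightly stronger by-product, but both arguments rest on the same ingredients, so the difference is one of organization rather than substance.
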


\begin{proof}
Since $\norme{q_n}_{BV(\Omega)}\leq M$ and $(q_n)_n\subset U$, there exists $q_\infty$ such that $q_n\rightharpoonup q_\infty,\ \mathrm{weak}^* \ \mathrm{in}\ BV(\Omega)$.
Using that $U$ is $\mathrm{weak}^*$ closed, we obtain that $q_\infty\in U$. 
Note that the sequence $(\psi(q_n))_n$ of solution to Problem \eqref{eq:FV_Helmholtz} satisfies estimate \eqref{eq:Uniform_H1_bound} uniformly with respect to $n$. As a result, 
\PH{there exists some $\psi_\infty\in H^1(\Omega)$ such that}
the convergences \eqref{eq:Convergence_contradiction} hold. 
\PH{Using then} \eqref{eq:Convergence_Terme_ordre_2}, 
we get that $a(q_n;\psi(q_n),\phi)\to a(q_\infty;\psi_\infty,\phi)$. 


Since $b(q_n,\phi)\to b(q_\infty,\phi)$ for all $\phi\in H^1(\Omega)$, this proves that $a(q_\infty;\psi_\infty,\phi)=b(q;\phi)$ for all $\phi\in H^1(\Omega)$. \JS{Consequently} $\psi_\infty=\psi(q_\infty)$ \PH{owning to the uniqueness of} a weak solution to \eqref{eq:FV_Helmholtz} 
and \PH{we have also proved that} $\psi(q_n)\rightharpoonup \psi(q_\infty)$ in $H^1(\Omega)$.

We now show that $\psi(q_n)\to \psi(q_\infty)$ strongly in $H^1$.
To see this, we start by noting that, up to extracting a subsequence (still denoted \JS{by} $q_n$), we can use \eqref{eq:Convergence_Terme_ordre_2} to get that 
\[
\lim_{n\to+\infty} b(q_n;\psi(q_n))=b(q_\infty;\psi(q_\infty)).
\]
Since $\psi(q_n),\psi(q_\infty)$ satisfy the variational problem (\ref{eq:FV_Helmholtz}), we infer 
\begin{equation}\label{eq:Strong_limit_bilinear_form}
\lim_{n\to+\infty} a(q_n;\psi(q_n),\psi(q_n))=a(q_\infty;\JS{\psi(q_\infty)},\psi(q_\infty)),
\end{equation}
where the whole sequence actually converges owing to the uniqueness of the limit. Using then that $\psi(q_n)\rightharpoonup \psi(q_\infty)$ in $H^1(\Omega)$ together with \eqref{eq:Strong_limit_bilinear_form}, one gets 
\begin{align*}
\norme{\sqrt{1+q_n}\nabla \psi(q_n)}^2_{L^2(\Omega)}
	&= a(q_n;\psi(q_n),\psi(q_n))+k_0\norme{\psi(q_n)}^2_{L^2(\Omega)} +\ii k_0\norme{\psi(q_n)}^2_{L^2(\partial\Omega)} \\
	& \hspace{-1cm} \xrightarrow[n\to+\infty]{}	
	a(q_\infty;\psi(q_\infty),\psi(q_\infty))+k_0\norme{\psi(q_\infty)}^2_{L^2(\Omega)} +\ii k_0\norme{\psi(q_\infty)}^2_{L^2(\partial\Omega)} \\
	&= \norme{\sqrt{1+q_\infty}\nabla \psi(q_\infty)}^2_{L^2(\Omega)}.
\end{align*}

To show that $\Lim{n\to+\infty}\norme{\nabla \psi(q_n)}^2_{L^2(\Omega)}=\norme{\nabla \psi(q_\infty)}^2_{L^2(\Omega)}$, note that 
\[
\nabla \psi(q_n)=\frac{\sqrt{1+q_n}\nabla \psi(q_n)}{\sqrt{1+q_n}}.
\]
Using the same arguments as those to prove (\ref{eq:Convergence_Terme_ordre_2}), we have a subsequence (same notation used) such that $q_n\rightarrow q_\infty$ pointwise a.e. in $\Omega$ and thus $\sqrt{1+q_n}^{-1}\rightarrow \sqrt{1+q_\infty}^{-1}$ pointwise a.e. in $\Omega$. Due to Lebesgue's dominated convergence theorem and $\sqrt{1+q_n}\nabla \psi(q_n)\rightarrow \sqrt{1+q_\infty}\nabla \psi(q_\infty)$ strongly in $L^2(\Omega)$, we have
\[
\nabla \psi(q_n)=\dfrac{\sqrt{1+q_n}\nabla \psi(q_n)}{\sqrt{1+q_n}}\rightarrow \dfrac{\sqrt{1+q_\infty}\nabla \psi(q_\infty)}{\sqrt{1+q_\infty}}=\nabla \psi(q_\infty) \textrm{  strong in}\ L^2(\Omega).
\]

The latter, together with \PH{the weak $H^1$-convergence}
show that $\psi(q_n)\to \psi(q_\infty)$ strongly in $H^1$. 
\end{proof}

\subsection{Existence of optimal solution in $U_{\Lambda}$}
We are now in \JS{a} position to prove the existence of a minimizer to Problem \eqref{eq:Optim_pbm}.

\begin{theorem}\label{thm:existence_min}
Assume that the cost function $(q,\psi)\in U_\Lambda\mapsto J(q,\psi)\in \R$ satisfies:
\begin{itemize}
\item[(A1)] There exists $\beta>0$ {and $J_0$} such that 
$$J(q,\psi)=J_0(q,\psi)+\beta |D q|(\Omega),$$
where $|D q|(\Omega)$ is defined in~\eqref{def:Dq}.
\item[(A2)] $\forall (q,\psi)\in U_\Lambda\times H^1(\Omega)$, $J_0(q,\psi)\geq m>-\infty$.
\item[(A3)] $(q,\psi)\mapsto J_0(q,\psi)$ is lower-semi-continuous with respect to the (weak$^*$,weak) topology of $BV(\Omega)\times H^1(\Omega)$.
\end{itemize} 

Then the optimization problem \eqref{eq:Optim_pbm} has at least one optimal solution in $U_\Lambda\times H^1(\Omega)$.
\end{theorem}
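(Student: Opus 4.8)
The plan is to use the direct method in the calculus of variations. First I would take a minimizing sequence $(q_n,\psi_n)_n\subset U_\Lambda\times H^1(\Omega)$ with $\psi_n=\psi(q_n)$ the weak solution to~\eqref{eq:FV_Helmholtz}, so that $J(q_n,\psi_n)\to \inf_{U_\Lambda\times H^1}J$. The key point is that this infimum is finite: by (A1) and (A2), $J(q,\psi)=J_0(q,\psi)+\beta|Dq|(\Omega)\geq m+\beta|Dq|(\Omega)\geq m>-\infty$, so the infimum $\mathfrak{m}$ is a real number and, along the minimizing sequence, $\beta|Dq_n|(\Omega)\leq J(q_n,\psi_n)-m$ stays bounded; hence $\sup_n|Dq_n|(\Omega)<+\infty$. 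Since $q_n\in U_\Lambda$ we also have $\|q_n\|_{L^1(\Omega)}\leq\Lambda|\Omega|$ (using $\alpha-1\leq q_n\leq\Lambda$), so $(q_n)_n$ is bounded in $BV(\Omega)$, say $\|q_n\|_{BV(\Omega)}\leq M$.

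Next I would extract convergent subsequences. By the compact embedding $BV(\Omega)\subset L^1(\Omega)$ and boundedness of $(Dq_n)_n$ in $\mathcal{M}_\mathrm{b}(\Omega,\R^2)$, up to a subsequence $q_n\rightharpoonup q_\infty$ weak$^*$ in $BV(\Omega)$; since $U_\Lambda$ is weak$^*$ closed, $q_\infty\in U_\Lambda$. The uniform $H^1$-bound~\eqref{eq:Uniform_H1_bound} from Theorem~\ref{thm:Uniform_H1_bound} (valid since $\|q_n\|_{L^\infty(\Omega)}\leq\max\{|\alpha-1|,\Lambda\}$ is controlled on $U_\Lambda$) gives that $(\psi_n)_n$ is bounded in $H^1(\Omega)$, so up to a further subsequence $\psi_n\rightharpoonup\psi_\infty$ weakly in $H^1(\Omega)$. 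Crucially, Theorem~\ref{thm:CTS_mapping} applies to this subsequence: it yields that $\psi_\infty=\psi(q_\infty)$ is the weak solution associated with $q_\infty$ (in fact $\psi_n\to\psi(q_\infty)$ strongly in $H^1$), so the limit pair $(q_\infty,\psi_\infty)$ is admissible for Problem~\eqref{eq:Optim_pbm}.

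Finally I would pass to the limit in the cost. Writing $J(q_n,\psi_n)=J_0(q_n,\psi_n)+\beta|Dq_n|(\Omega)$, I use (A3): since $q_n\rightharpoonup q_\infty$ weak$^*$ in $BV$ and $\psi_n\rightharpoonup\psi_\infty$ weakly in $H^1$, we get $J_0(q_\infty,\psi_\infty)\leq\liminf_{n\to+\infty}J_0(q_n,\psi_n)$. Combining this with the weak$^*$ lower semicontinuity of $q\mapsto|Dq|(\Omega)$ recalled in the excerpt, namely $|Dq_\infty|(\Omega)\leq\liminf_{n\to+\infty}|Dq_n|(\Omega)$, and using superadditivity of $\liminf$, we obtain
\begin{equation*}
J(q_\infty,\psi_\infty)\leq\liminf_{n\to+\infty}J_0(q_n,\psi_n)+\beta\liminf_{n\to+\infty}|Dq_n|(\Omega)\leq\liminf_{n\to+\infty}J(q_n,\psi_n)=\mathfrak{m}.
\end{equation*}
Since $(q_\infty,\psi_\infty)$ is admissible, $J(q_\infty,\psi_\infty)\geq\mathfrak{m}$, whence equality and $(q_\infty,\psi_\infty)$ is an optimal solution. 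The main obstacle — and the reason the penalization term $\beta|Dq|(\Omega)$ is introduced — is obtaining the uniform $BV$-bound on the minimizing sequence: without it the sequence need not be relatively compact for the weak$^*$ topology, and indeed the excerpt notes that elements of $U_\Lambda$ are not a priori bounded in $BV$-norm; the continuity of the control-to-state map (Theorem~\ref{thm:CTS_mapping}) and the lower semicontinuity assumptions then make the rest of the argument essentially routine. (If one instead works with $U_{\Lambda,\kappa}$, this compactness is automatic and (A1) can be relaxed, but with $U_\Lambda$ the penalization is what does the work.)
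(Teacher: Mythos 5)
Your proposal is correct and follows essentially the same route as the paper's proof: a direct-method argument where (A1)--(A2) together with the a priori bound of Theorem~\ref{thm:Uniform_H1_bound} give uniform $BV\times H^1$ bounds on a minimizing sequence, Theorem~\ref{thm:CTS_mapping} identifies the weak limit as an admissible pair, and (A3) plus the weak$^*$ lower semi-continuity of $q\mapsto|Dq|(\Omega)$ pass the cost to the limit. The only nitpick is the constant in your $L^1$ bound, which should be $\max\{1-\alpha,\Lambda\}\,|\Omega|$ rather than $\Lambda|\Omega|$ when $\Lambda<1-\alpha$; this does not affect the argument.
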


\begin{proof}
The existence of a minimizer to Problem \eqref{eq:Optim_pbm} can be obtained with standard technique by combining Theorem \ref{thm:CTS_mapping} with weak-compactness arguments as done in \cite[Lemma 2.1]{Chen_BV_1999}, \cite[Theorem 4.1]{Bastide_2018} or \cite[Theorem 1]{Haslinger_2015}. We still give the proof for the sake of completeness.

We introduce the following set 
\[
\mathcal{A}=\left\{(q,\psi)\in U_\Lambda\times H^1(\Omega)\ \left|\ a(q;\psi,\phi)=b(q;\phi)\ \forall \phi\in H^1(\Omega) \right.\right\}.
\]
The existence and uniqueness of solution to Problem (\ref{eq:FV_Helmholtz}) ensure that $\mathcal{A}$ is non-empty. In addition, combining \JS{Assumptions}~$(A1)$ and $(A2)$, we obtain that $J(q,\psi)$ is bounded from below on $\mathcal{A}$. We thus have a minimizing sequence $(q_n,\psi_n)\in \mathcal{A}$ such that 
\[
\lim_{n\to +\infty} J(q_n,\psi_n)=\inf_{(q,\psi)\in \mathcal{A}} J(q,\psi).
\] 
Theorem \ref{thm:Uniform_H1_bound} and $(A1)$ then gives that the sequence $(q_n,\psi_n)\in BV(\Omega)\times H^1(\Omega)$ is uniformly bounded with respect to $n$ and thus admits a subsequence that converges towards $(q^*,\psi^*)$ in the (weak$^*$,weak) topology of $BV(\Omega)\times H^1(\Omega)$. Using now Theorem \ref{thm:CTS_mapping} and the weak$^*$ lower semi-continuity of $q\mapsto |Dq|(\Omega)$,
we end up with $(q^*,\psi^*)\in \mathcal{A}$ and 
\[
J(q^*,\psi^*)\leq \liminf_{n\to+\infty} J(q_n,\psi_n)=\inf_{(q,\psi)\in \mathcal{A}} J(q,\psi).
\]
\end{proof}

It is worth noting that \JS{ the penalization term $\beta\norme{q}_{BV(\Omega)}$} has been introduced only to obtain a uniform bound in the $BV$-norm for the minimizing sequence. 

\subsection{Existence of optimal solution in $U_{\Lambda,\kappa}$}
We show here the existence of optimal solution to Problem \eqref{eq:Optim_pbm} for $U=U_{\Lambda,\kappa}$. Note that any $q\in U_{\Lambda,\kappa}$ is actually bounded 
in $BV$ since 
\[
\norme{q}_{BV(\Omega)}\leq 2\max(\Lambda,\kappa,|\alpha-1|).
\]
With this property at hand, we can get a similar result to Theorem \ref{thm:existence_min} without adding a penalization term in the cost function, hence $\beta=0$. 

\begin{theorem}\label{thm:existence_min_U_lambda_kappa}
Assume that the cost function $(q,\psi)\in U_{\Lambda,\kappa}\mapsto J(q,\psi)\in \R$ satisfies $(A2)-(A3)$ given in Theorem \ref{thm:existence_min} and that $\beta=0$. Then the optimization problem \eqref{eq:Optim_pbm} with $U=U_{\Lambda,\kappa}$ has at least one optimal solution.
\end{theorem}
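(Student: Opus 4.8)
The plan is to repeat almost verbatim the scheme used in the proof of Theorem~\ref{thm:existence_min}, the only difference being that the uniform $BV$-bound on the minimizing sequence is now supplied by the very definition of $U_{\Lambda,\kappa}$ instead of by the penalization term, so that one can afford $\beta=0$. Concretely, I would first introduce the feasible set
\[
\mathcal{A}=\left\{(q,\psi)\in U_{\Lambda,\kappa}\times H^1(\Omega)\ \left|\ a(q;\psi,\phi)=b(q;\phi)\ \forall \phi\in H^1(\Omega) \right.\right\},
\]
which is non-empty by the existence and uniqueness of a weak solution to \eqref{eq:FV_Helmholtz}. Since $\beta=0$ we have $J=J_0$, so assumption $(A2)$ gives $\inf_{\mathcal{A}}J\geq m>-\infty$ and we may select a minimizing sequence $(q_n,\psi_n)\in\mathcal{A}$ with $J_0(q_n,\psi_n)\to\inf_{\mathcal{A}}J$.

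Next I would exploit that every $q\in U_{\Lambda,\kappa}$ satisfies $\norme{q}_{BV(\Omega)}\leq 2\max(\Lambda,\kappa,|\alpha-1|)$, so that $(q_n)_n$ is automatically bounded in $BV(\Omega)$. By weak-$*$ compactness of bounded sets of $BV(\Omega)$ together with the compact embedding $BV(\Omega)\hookrightarrow L^1(\Omega)$ in dimension two, a subsequence (not relabelled) satisfies $q_n\rightharpoonup q^*$ weak-$*$ in $BV(\Omega)$, and $q^*\in U_{\Lambda,\kappa}$ since this set is weak-$*$ closed. Applying the uniform estimate \eqref{eq:Uniform_H1_bound} of Theorem~\ref{thm:Uniform_H1_bound}, the associated states $\psi_n=\psi(q_n)$ are bounded in $H^1(\Omega)$, hence along a further subsequence $\psi_n\rightharpoonup\psi^*$ weakly in $H^1(\Omega)$. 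Theorem~\ref{thm:CTS_mapping} then upgrades this convergence to $\psi_n\to\psi(q^*)$ strongly in $H^1(\Omega)$, so that $\psi^*=\psi(q^*)$ and therefore $(q^*,\psi^*)\in\mathcal{A}$.

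Finally, the lower semicontinuity assumption $(A3)$ with respect to the $(\mathrm{weak}^*,\mathrm{weak})$ topology of $BV(\Omega)\times H^1(\Omega)$ yields
\[
J(q^*,\psi^*)=J_0(q^*,\psi^*)\leq\liminf_{n\to+\infty}J_0(q_n,\psi_n)=\liminf_{n\to+\infty}J(q_n,\psi_n)=\inf_{(q,\psi)\in\mathcal{A}}J(q,\psi),
\]
which shows that $(q^*,\psi^*)$ is an optimal solution of \eqref{eq:Optim_pbm} with $U=U_{\Lambda,\kappa}$.

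I do not expect any genuine obstacle here beyond what was already dealt with in Theorem~\ref{thm:existence_min}: the single point worth stating explicitly is that the additional constraint $|Dq|(\Omega)\leq\kappa$ makes $U_{\Lambda,\kappa}$ bounded in the $BV$-norm, which is precisely the property that the penalization $\beta|Dq|(\Omega)$ was introduced to recover in the $U_\Lambda$ setting; with this built-in bound, the compactness argument on the minimizing sequence goes through unchanged while taking $\beta=0$, and assumption $(A1)$ is no longer needed.
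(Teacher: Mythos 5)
Your proof is correct and follows essentially the same route as the paper: the built-in bound $\norme{q}_{BV(\Omega)}\leq 2\max(\Lambda,\kappa,|\alpha-1|)$ replaces the penalization term, a weak-$*$ convergent subsequence is extracted with limit in the weak-$*$ closed set $U_{\Lambda,\kappa}$, Theorem~\ref{thm:CTS_mapping} gives strong $H^1$ convergence of the states, and $(A3)$ closes the argument exactly as in Theorem~\ref{thm:existence_min}. The paper's proof is a compressed version of the same steps, so nothing further is needed.
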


\begin{proof}
We introduce the following non-empty set
\[
\mathcal{A}=\left\{(q,\psi)\in U_{\Lambda,\kappa}\times H^1(\Omega)\ |\ a(q;\psi,\phi)=b(q;\phi)\ \forall \phi\in H^1(\Omega) \right\}.
\]
From $(A2)$, $J(q,\psi)$ is bounded from below on $\mathcal{A}$. We thus have a minimizing sequence $(q_n,\psi_n)\in \mathcal{A}$ such that 
\[
\lim_{n\to +\infty} J(q_n,\psi_n)=\inf_{(q,\psi)\in \mathcal{A}} J(q,\psi).
\] 
Since $(q_n)_n\subset U_{\Lambda,\kappa}$, it satisfies $\norme{q_n}_{BV(\Omega)}\leq 2\max(\Lambda,\kappa,|\alpha-1|)$ and thus admits a convergent subsequence toward some $q\in U_{\Lambda,\kappa}$.
Theorem \ref{thm:CTS_mapping} then gives that $\psi(q_n)\to \psi(q)$ strongly in $H^1(\Omega)$ and the proof can be finished as the proof of Theorem \ref{thm:existence_min}.
\end{proof}

\section{Boundedness/Continuity of solution to Helmholtz problem}\label{sec:continuity}

In this section, we prove that even if the parameter $q$ is not smooth enough for the solution to \eqref{eq:Helmholtz} to be in $H^s(\Omega)$ for some $s>1$, we can still have continuous solution.
In order to prove such regularity for $\psi$, we are going to rely on the De Giorgi-Nash-Moser theory \cite[Chapter 8.5]{Trudinger}, \cite[Chapters 3.13, 7.2]{Ural_1968} and more precisely 
on \cite[Proposition 3.6]{Nittka_2011} which reads
\begin{theorem}\label{thm:Tool_for_boundedness}
Consider the elliptic problem associated with inhomogeneous Neu\-mann boundary condition given by
\begin{equation} \left\{
\begin{aligned}
\mathcal{L} v\vcentcolon=\div{A(x)\nabla v} 
	&= f_0-\sum_{j=1}^N \dpart{f_j}{x_j},\\ 
\dn{v} 
	&= h+\sum_{j=1}^N f_j n_j, 
\end{aligned}
\right.\label{eq:Elliptic_PDE}
\end{equation}
where $A\in L^\infty(\Omega,\R^{N\times N})$ satisfy the standard ellipticity condition $A(x)\xi\cdot\xi \geq \gamma|\xi|^2$ for a.e. $x\in\Omega$.
Let $p>N$ and assume that $f_0\in L^{p/2}(\Omega)$, $f_j\in L^p(\Omega)$ for all $j=1,\cdots,N$ and $h\in L^{p-1}(\partial\Omega)$. Then the weak solution $v$
to \eqref{eq:Elliptic_PDE} satisfies 
\[
\norme{v}_{C^0(\Omega)}\leq C(N,p,\Omega,\gamma)\left( \norme{v}_{L^2(\Omega)} +\norme{f_0}_{L^{p/2}(\Omega)} +\sum_{j=1}^N \norme{f_j}_{L^p(\Omega)}+\norme{h}_{L^{p-1}(\partial\Omega)}\right).
\]   
\end{theorem}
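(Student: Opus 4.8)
This is \cite[Proposition 3.6]{Nittka_2011}, so one admissible route is simply to cite it; for completeness I describe how I would prove it directly, by the De~Giorgi--Stampacchia truncation method (a Moser iteration works equally well). The first reduction is to an a priori estimate for $v^+:=\max(v,0)$ only: since $-v$ is a weak solution of the same type of problem with data $(-f_0,-f_j,-h)$, the same estimate applied to $-v$ bounds $v^-=\max(-v,0)$, hence $v=v^+-v^-$. Writing
\[
K:=\norme{v}_{L^2(\Omega)}+\norme{f_0}_{L^{p/2}(\Omega)}+\sum_{j=1}^N\norme{f_j}_{L^p(\Omega)}+\norme{h}_{L^{p-1}(\partial\Omega)},
\]
it suffices to prove $\norme{v^+}_{L^\infty(\Omega)}\le C(N,p,\Omega,\gamma)\,K$. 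The continuity of $v$ (the ``$C^0$'' rather than merely ``$L^\infty$'') is then a separate, classical consequence of the De~Giorgi--Nash oscillation estimates up to a Lipschitz boundary \cite[Chapter 8.5]{Trudinger}, \cite[Chapters 3.13, 7.2]{Ural_1968}, so the substance is the quantitative bound.

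For the bound, fix $k\ge 0$ and test the weak formulation of~\eqref{eq:Elliptic_PDE} with the admissible function $w_k:=(v-k)^+\in H^1(\Omega)$ (no boundary constraint must be satisfied, the problem being of Neumann type). Setting $A(k):=\{x\in\Omega:v(x)>k\}$ and $\Gamma(k):=\{x\in\partial\Omega:v(x)>k\}$, the ellipticity $A(x)\xi\cdot\xi\ge\gamma|\xi|^2$ gives the Caccioppoli-type inequality
\[
\gamma\int_{A(k)}|\nabla v|^2\,dx\le\abs{\int_\Omega f_0\,w_k\,dx}+\sum_{j=1}^N\abs{\int_\Omega f_j\,\partial_{x_j}w_k\,dx}+\abs{\int_{\partial\Omega}h\,w_k\,d\sigma}.
\]
I would estimate the right-hand side by Hölder's inequality, peeling off a positive power of $|A(k)|$ and of the surface measure $\sigma(\Gamma(k))$; the exponents are feasible precisely because $f_0\in L^{p/2}$ with $p/2>N/2$, $f_j\in L^p$ with $p>N$, and $h\in L^{p-1}$ with $p-1>N-1$. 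Combining this with the Sobolev embedding $H^1(\Omega)\hookrightarrow L^{2^*}(\Omega)$ and the trace embedding $H^1(\Omega)\hookrightarrow L^{2(N-1)/(N-2)}(\partial\Omega)$ (constants depending only on the fixed Lipschitz domain $\Omega$), using Young's inequality to absorb $\norme{\nabla w_k}_{L^2}$ into the left-hand side, and first recording from the case $k=0$ that $\norme{w_0}_{H^1(\Omega)}\le CK$, one arrives — after an elementary but slightly delicate manipulation — at an estimate of the form $\norme{w_k}_{H^1(\Omega)}^2\le C\,K^2\,\big(|A(k)|+\sigma(\Gamma(k))\big)^{\chi}$, where the decisive point is that the composite exponent $\chi$ is \emph{strictly larger than} $2/2^*$; this strict inequality is exactly what the hypotheses on $p$ encode.

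The second step is Stampacchia's iteration. For $h>k\ge 0$ one has $A(h)\subset A(k)$, $\Gamma(h)\subset\Gamma(k)$, and $w_k\ge h-k$ on $A(h)$ and on $\Gamma(h)$, so with $\varphi(k):=|A(k)|+\sigma(\Gamma(k))$ the embeddings above yield
\[
(h-k)^2\,\varphi(h)^{2/2^*}\le C\,\norme{w_k}_{H^1(\Omega)}^2\le C\,K^2\,\varphi(k)^{\chi},
\]
that is, a recursion $\varphi(h)\le C\,K^{2^*}(h-k)^{-2^*}\varphi(k)^{\delta}$ with $\delta:=\tfrac{2^*}{2}\chi>1$. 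The classical lemma on decreasing functions (see e.g.\ \cite[Chapter 8.5]{Trudinger}) then forces $\varphi(k)=0$ for every $k\ge C\,K\,\varphi(0)^{(\delta-1)/2^*}$, and since $\varphi(0)\le|\Omega|+\sigma(\partial\Omega)$ depends only on $\Omega$, this is precisely $\norme{v^+}_{L^\infty(\Omega)}\le C(N,p,\Omega,\gamma)\,K$; with the analogous bound for $v^-$ the estimate follows.

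The step I expect to be the main obstacle is the exponent bookkeeping in the energy estimate: one must verify that the Hölder exponents generated by $f_0,f_j,h$ combine with the Sobolev and trace exponents to produce a genuinely super-linear recursion ($\delta>1$), uniformly in $k$, and that every constant is that of the fixed Lipschitz domain $\Omega$ and does not degenerate — which in turn requires covering $\partial\Omega$ by graph patches and flattening (or invoking boundary De~Giorgi classes). A second, minor point specific to the two-dimensional setting of this paper is that $N=2$ makes $2^*=+\infty$: one then replaces $2^*$ throughout by a large but finite exponent $r<\infty$, using $H^1(\Omega)\hookrightarrow L^r(\Omega)$ and the corresponding trace embedding, and reruns the same argument.
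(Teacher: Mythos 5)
Your proposal is mathematically sound, but it takes a different route from the paper: the paper does not prove this statement at all, it simply quotes it as \cite[Proposition 3.6]{Nittka_2011} (which is why the statement appears with no proof environment), whereas you sketch a self-contained De~Giorgi--Stampacchia truncation argument. The comparison is as follows. Citing Nittka buys, in one stroke, both the quantitative bound and the global continuity on a Lipschitz domain with Neumann data, which is exactly what the paper needs before invoking the result for the real and imaginary parts of $\psi$ in Theorem~\ref{thm:Boundedness_Helmholtz}. Your truncation sketch makes the $L^\infty$ bound transparent and shows where each hypothesis ($f_0\in L^{p/2}$, $f_j\in L^p$, $h\in L^{p-1}(\partial\Omega)$ with $p>N$) enters the exponent bookkeeping, but two points deserve care. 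First, the intermediate estimate $\norme{w_k}_{H^1(\Omega)}^2\le C K^2\varphi(k)^{\chi}$ with $\chi>2/2^*$ cannot hold uniformly for all $k\ge 0$ as written, because the $L^2$ part of the $H^1$ norm of $w_k$ carries no power of $\varphi(k)$ (and the crude bound $\norme{w_k}_{L^2}^2\le \norme{v^+}_{L^{2^*}}^2|A(k)|^{2/N}$ gives $2/N>2/2^*$ only for $N<4$); the standard repair is to start Stampacchia's iteration at a level $k_0\simeq CK$ for which $|A(k_0)|$ is small enough to absorb that term, or to work with $\norme{\nabla w_k}_{L^2}$ alone via a Sobolev--Poincar\'e inequality on $A(k)$ --- your phrase about a ``slightly delicate manipulation'' gestures at this but the fix should be made explicit. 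Second, your continuity step still rests on citing boundary De~Giorgi--Nash oscillation estimates for Neumann problems on Lipschitz domains, so the sketch does not fully dispense with the literature; in that sense the paper's one-line citation and your argument end up leaning on the same classical machinery, with yours exposing the quantitative core and the paper's being shorter.
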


\subsection{$C^0$-bound for the general Helmholtz problem}
Using Theorem \ref{thm:Tool_for_boundedness}, we can prove some $L^\infty$ bound for the weak solution to Helmholtz equation with bounded coefficients.

\begin{theorem}\label{thm:Boundedness_Helmholtz}
Assume that $q\in L^\infty(\Omega)$ and satisfies \eqref{eq:hyp_q} and $g\in L^2(\partial\Omega)$. Then the solution to Problem \eqref{eq:FV_Helmholtz} satisfies 
\begin{equation}\label{eq:Final_L_infty_bound}
\norme{\psi}_{C^0(\Omega)}
	\leq \widetilde{C}(\Omega)\widetilde{C}_\mathrm{s}(k_0,\alpha)\left(\norme{q}_{L^\infty(\Omega)} \norme{\nabla \psi_0}_{L^\infty(\Omega)} +\norme{g}_{L^2(\partial\Omega)} \right),
\end{equation}  
where 
\[
\widetilde{C}_\mathrm{s}(k_0,\alpha) = 1 +\left((1+k_0^2)k_0^{-1} +\alpha^{-1/2}\right)\max\{k_0^{-1},\alpha^{-1/2}\}C_{\mathrm{s}}(k_0),
\]
and $\widetilde{C}(\Omega)>0$ does not depend on $k$ nor $q$.
\end{theorem}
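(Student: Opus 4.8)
The plan is to recognise the weak formulation \eqref{eq:FV_Helmholtz} as a real second-order elliptic problem of the type treated in Theorem~\ref{thm:Tool_for_boundedness}, and then to close the resulting $C^0$-estimate with the $H^1$-bound \eqref{eq:Uniform_H1_bound}. Since $g\in L^2(\partial\Omega)$, the identity $a(q;\psi,\phi)=b(q;\phi)$ can be rearranged as
\[
\int_\Omega (1+q)\,\nabla\psi\cdot\nabla\overline\phi\,dx=\int_\Omega k_0^2\psi\,\overline\phi\,dx+\ii k_0\int_{\partial\Omega}\psi\,\overline\phi\,d\sigma-\int_\Omega q\,\nabla\psi_0\cdot\nabla\overline\phi\,dx+\int_{\partial\Omega}g\,\overline\phi\,d\sigma ,
\]
which exhibits $\psi$ as a weak solution of a problem of the form \eqref{eq:Elliptic_PDE} with diffusion matrix $A=(1+q)\mathbb{I}$ --- real and, by \eqref{eq:hyp_q}, uniformly elliptic with constant $\gamma=\alpha$ --- bulk source $f_0=k_0^2\psi$, fluxes $f_j=-q\,\partial_{x_j}\psi_0$ for $j=1,2$, and Neumann datum $h=g+\ii k_0\,\psi|_{\partial\Omega}$. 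As $A$ is a real scalar matrix, Theorem~\ref{thm:Tool_for_boundedness} applies separately to $\Re{\psi}$ and to $\Im{\psi}$, the coupling between the two entering only through $f_0$ and $h$, which are fixed data once $\psi\in H^1(\Omega)$ is known; the argument is therefore not circular, $\psi$ being controlled a posteriori by \eqref{eq:Uniform_H1_bound}.

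The next step is to check that these data meet the integrability hypotheses of Theorem~\ref{thm:Tool_for_boundedness} for some exponent $p>N=2$. Since $\Omega\subset\R^2$, the embedding $H^1(\Omega)\hookrightarrow L^r(\Omega)$ holds for every finite $r$, so $f_0=k_0^2\psi\in L^{p/2}(\Omega)$; moreover $|\nabla\psi_0|\equiv k_0$ and $q\in L^\infty(\Omega)$, hence $f_j\in L^\infty(\Omega)$; and the trace theorem together with $H^{1/2}(\partial\Omega)\hookrightarrow L^s(\partial\Omega)$ for every finite $s$ (the boundary being one-dimensional) gives $\psi|_{\partial\Omega}\in L^s(\partial\Omega)$ for all finite $s$, whence $h\in L^2(\partial\Omega)$ because $g\in L^2(\partial\Omega)$. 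The only genuine restriction is $p-1\le 2$, imposed by $g$, so every $p\in(2,3]$ is admissible; taking $p=3$ (so $p/2=3/2$, $p-1=2$), Theorem~\ref{thm:Tool_for_boundedness} gives, after adding the bounds for $\Re{\psi}$ and $\Im{\psi}$,
\[
\norme{\psi}_{C^0(\Omega)}\leq C(\Omega,\alpha)\Big(\norme{\psi}_{L^2(\Omega)}+k_0^2\norme{\psi}_{L^{3/2}(\Omega)}+\norme{q}_{L^\infty(\Omega)}\norme{\nabla\psi_0}_{L^\infty(\Omega)}+\norme{g}_{L^2(\partial\Omega)}+k_0\norme{\psi}_{L^2(\partial\Omega)}\Big),
\]
with $C(\Omega,\alpha)$ independent of $k_0$ and $q$.

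It then remains to bound the right-hand side in terms of the data. Hölder's inequality on the bounded domain gives $\norme{\psi}_{L^{3/2}(\Omega)}\le|\Omega|^{1/6}\norme{\psi}_{L^2(\Omega)}$; the trace inequality gives $\norme{\psi}_{L^2(\partial\Omega)}\le C(\Omega)\norme{\psi}_{H^1(\Omega)}$; and the very definition $\norme{\psi}_{1,k_0}^2=k_0^2\norme{\psi}_{L^2(\Omega)}^2+\alpha\norme{\nabla\psi}_{L^2(\Omega)}^2$ yields $\norme{\psi}_{L^2(\Omega)}\le k_0^{-1}\norme{\psi}_{1,k_0}$ and $\norme{\psi}_{H^1(\Omega)}\le\sqrt2\,\max\{k_0^{-1},\alpha^{-1/2}\}\norme{\psi}_{1,k_0}$. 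After this bookkeeping the terms carrying $\psi$ collapse into a multiple of $\norme{\psi}_{1,k_0}$; inserting then the a priori estimate \eqref{eq:Uniform_H1_bound} --- in which $\norme{\nabla\psi_0}_{L^2(\Omega)}=k_0\sqrt{|\Omega|}$ and $\norme{g}_{H^{-1/2}(\partial\Omega)}\le C(\Omega)\norme{g}_{L^2(\partial\Omega)}$ --- so as to replace $\norme{\psi}_{1,k_0}$ by $C(\Omega)C_\mathrm{s}(k_0)\max\{k_0^{-1},\alpha^{-1/2}\}\big(\norme{q}_{L^\infty(\Omega)}\norme{\nabla\psi_0}_{L^\infty(\Omega)}+\norme{g}_{L^2(\partial\Omega)}\big)$ leads to \eqref{eq:Final_L_infty_bound} with $\widetilde C_\mathrm{s}(k_0,\alpha)=1+\big((1+k_0^2)k_0^{-1}+\alpha^{-1/2}\big)\max\{k_0^{-1},\alpha^{-1/2}\}C_\mathrm{s}(k_0)$.

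I expect the substance of the proof to lie in the two preparatory reductions rather than in any new estimate: one must (i) pass to real and imaginary parts to stay within the real-valued framework of Theorem~\ref{thm:Tool_for_boundedness}, which is harmless because $A=(1+q)\mathbb{I}$ is scalar; (ii) choose $p\in(2,3]$ so that the low regularity $g\in L^2(\partial\Omega)$ remains compatible with $p>N$ --- this is precisely where the hypothesis $N=2$ enters, and where the argument would break down in dimension three; and (iii) track the dependence on $k_0$ and $\alpha$ through the Hölder and trace inequalities and through \eqref{eq:Uniform_H1_bound} so that the constant emerges in the stated form $\widetilde C_\mathrm{s}(k_0,\alpha)$. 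Once these are in place the result is a direct combination of Theorem~\ref{thm:Tool_for_boundedness} with Theorem~\ref{thm:Uniform_H1_bound}.
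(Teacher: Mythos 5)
Your route is the paper's: pass to real and imaginary parts so that Theorem~\ref{thm:Tool_for_boundedness} applies with the real scalar diffusion $A=(1+q)\mathbb{I}$ and ellipticity constant $\alpha$, identify the data $f_0=k_0^2\psi$, $f_j=-q\,\partial_{x_j}\psi_0$, $h=g+\ii k_0\psi|_{\partial\Omega}$, choose $p=3$, $N=2$ so that $g\in L^2(\partial\Omega)$ is admissible, and then close the loop with the a priori bound \eqref{eq:Uniform_H1_bound}. Up to the (harmless) remark that the real/imaginary coupling actually enters only through the boundary datum and not through $f_0$, this is exactly the paper's argument.

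The one step that does not deliver the statement is your treatment of the boundary term $k_0\norme{\psi}_{L^2(\partial\Omega)}$. You bound it by the plain trace inequality $\norme{\psi}_{L^2(\partial\Omega)}\leq C(\Omega)\norme{\psi}_{H^1(\Omega)}$ and then by $\norme{\psi}_{H^1(\Omega)}\leq \max\{k_0^{-1},\alpha^{-1/2}\}\norme{\psi}_{1,k_0}$, which contributes a factor $k_0\max\{k_0^{-1},\alpha^{-1/2}\}=\max\{1,k_0\alpha^{-1/2}\}$ in front of $\norme{\psi}_{1,k_0}$. This is not dominated, uniformly in $k_0$ and $\alpha$, by the factor $(1+k_0^2)k_0^{-1}+\alpha^{-1/2}$ that appears in $\widetilde{C}_{\mathrm{s}}(k_0,\alpha)$ (take $k_0=\alpha^{-1/2}=M$ large: $k_0\alpha^{-1/2}=M^2$ while $k_0^{-1}+k_0+\alpha^{-1/2}\sim 2M$), and it cannot be absorbed into $\widetilde{C}(\Omega)$, which must be independent of $k_0$ and of $q$ (hence of $\alpha$). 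So your bookkeeping proves a $C^0$ bound of the same shape but with a strictly worse constant than \eqref{eq:Final_L_infty_bound}; the claim that the stated $\widetilde{C}_{\mathrm{s}}(k_0,\alpha)$ "emerges" is not justified. The paper instead uses the multiplicative trace inequality $\norme{\psi}_{L^2(\partial\Omega)}\leq C(\Omega)\sqrt{\norme{\psi}_{L^2(\Omega)}\norme{\psi}_{H^1(\Omega)}}$ followed by Young's inequality, which converts $k_0\norme{\psi}_{L^2(\partial\Omega)}$ into $C\left(\norme{\nabla\psi}_{L^2(\Omega)}+k_0^2\norme{\psi}_{L^2(\Omega)}\right)$ and hence into the factor $\left((1+k_0^2)k_0^{-1}+\alpha^{-1/2}\right)\norme{\psi}_{1,k_0}$. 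With that single replacement your proof coincides with the paper's and yields the stated constant.
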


\begin{proof}
We cannot readily apply Theorem \ref{thm:Tool_for_boundedness} to the weak solution of Problem \eqref{eq:Helmholtz} since it involves a complex valued operator. We therefore consider the Problem satisfied by $\nu=\Re{u}$ and $\zeta=\Im{u}$ which is given by 
\begin{equation}\left\{
\begin{aligned}
-\div{(1+q)\nabla \nu}-k_0^2\nu 
	&=\div{q\nabla \Re{\psi_0}}
	&&\textrm{ in } \Omega, \\
-\div{(1+q)\nabla \zeta}-k_0^2\zeta 
	&=\div{q\nabla \Im{\psi_0}}
	&&\textrm{ in } \Omega, \\
(1+q)\dn{\nu} 
	&=\Re{g}-k_0\zeta-q \dn{\Re{\psi_0}},		
	&&\textrm{ on } \partial\Omega, \\
(1+q)\dn{\zeta} 
	&=\Im{g}+k_0\nu-q\dn{\Im{\psi_0}}
	&&\textrm{ on }\partial\Omega.
\end{aligned}
\right.\label{eq:Helmholtz_real_imaginary}
\end{equation}
Since Problem \eqref{eq:Helmholtz_real_imaginary} is equivalent to Problem (\ref{eq:Helmholtz}), we get that the weak solution $(\nu,\zeta)\in H^1(\Omega)$ to (\ref{eq:Helmholtz_real_imaginary}) satisfies the inequality (\ref{eq:Uniform_H1_bound}). 
Assuming that $g\in L^2(\partial\Omega)$ and
using the continuous Sobolev embedding $H^1(\Omega)\subset L^6(\Omega)$, the (compact) embedding $H^{1/2}(\partial\Omega)\subset L^2(\partial\Omega)$, that $q\in L^\infty(\Omega)$ satisfies \eqref{eq:hyp_q} and the fact that $\psi_0$ is smooth we get the next regularities 
%
\begin{align*}
	f_{0,1}&=k_0^2\nu\in L^6(\Omega),\ 
	f_{j,1}=q\dpart{\Re{\psi_0}}{x_j}\in L^\infty(\Omega),\ 
	h_{1}=\Re{g}-k_0\zeta\in L^2(\partial\Omega), \\
	f_{0,2}&=k_0^2\zeta\in L^6(\Omega),\ 
	f_{j,2}=q\dpart{\Im{\psi_0}}{x_j}\in L^\infty(\Omega),\ 
	h_{2}=\Im{g}+k_0\nu\in L^2(\partial\Omega).
\end{align*}

Applying now Theorem \ref{thm:Tool_for_boundedness} to \eqref{eq:Helmholtz_real_imaginary} twice with $p=3$ and $N=2$, one 
gets $C^0$ bounds for $\nu$ and $\zeta$
\begin{align*} 
\norme{\nu}_{C^0(\Omega)}
	&\leq C(2,3,\Omega,\gamma)\left( \norme{\nu}_{L^2(\Omega)} +\norme{f_{0,1}}_{L^{3/2}(\Omega)} +\sum_{j=1}^2 \norme{f_{j,1}}_{L^3(\Omega)}+\norme{h_1}_{L^{2}(\partial\Omega)}\right),
\\
\norme{\zeta}_{C^0(\Omega)}
	&\leq C(2,3,\Omega,\gamma)\left( \norme{\zeta}_{L^2(\Omega)} +\norme{f_{0,2}}_{L^{3/2}(\Omega)} +\sum_{j=1}^2 \norme{f_{j,2}}_{L^3(\Omega)}+\norme{h_2}_{L^{2}(\partial\Omega)}\right).
\end{align*}

Some computations with the Holder and multiplicative trace inequalities then give
\begin{align*}
(\norme{\nu}_{L^2(\Omega)} +\norme{\zeta}_{L^2(\Omega)})
	&\leq 2 \norme{\psi}_{L^2(\Omega)},\\ 
\norme{f_{0,1}}_{L^{3/2}(\Omega)} +\norme{f_{0,2}}_{L^{3/2}(\Omega)}
	&\leq k_0^2 \norme{\psi}_{L^{3/2}(\Omega)} \leq |\Omega|^{1/6} k_0^2 \norme{\psi}_{L^{2}(\Omega)}, \\
\norme{f_{j,l}}_{L^3(\Omega)}
	&\leq \norme{q}_{L^\infty(\Omega)} \norme{\nabla \psi_0}_{L^\infty(\Omega)},\ j=1,2, \\
\norme{h_1}_{L^2(\partial\Omega)} +\norme{h_2}_{L^2(\partial\Omega)} 
	&\leq \norme{g}_{L^2(\partial\Omega)}+k_0\norme{\psi}_{L^2(\partial\Omega)} \\
	& \leq \norme{g}_{L^2(\partial\Omega)}+k_0C(\Omega)\sqrt{\norme{\psi}_{L^2(\Omega)}\norme{\psi}_{H^1(\Omega)}}.
\end{align*}
Using then Young's inequality yields
\begin{align*}
k_0\sqrt{\norme{\psi}_{L^2(\Omega)}\norme{\psi}_{H^1(\Omega)}} 
	&\leq C \left(\norme{\psi}_{H^1(\Omega)}+k_0^2\norme{\psi}_{L^2(\Omega)}\right)\\
	&\leq C \left(\norme{\nabla \psi}_{L^2(\Omega)}+k_0^2	\norme{\psi}_{L^2(\Omega)}\right) 
\end{align*}
where $C>0$ is a generic constant. We obtain the bound 
\begin{align*}
\norme{\psi}_{C^0(\Omega)}
	&= \norme{\nu}_{C^0(\Omega)}+\norme{\zeta}_{C^0(\Omega)} \\
	&\leq \widetilde{C}(\Omega)\left( \left(1+k_0^2 \right)\norme{\psi}_{L^2(\Omega)}+\norme{\nabla \psi}_{L^2(\Omega)} +\norme{q}_{L^\infty(\Omega)} \norme{\nabla \psi_0}_{L^\infty(\Omega)} +\norme{g}_{L^2(\partial\Omega)} \right). 
\end{align*}

Using the definition of $\norme{\psi}_{1,k_0}$ on the estimate above, we get
\begin{equation} \label{eq:L_infty_bound}
\begin{aligned} 
\norme{\psi}_{C^0(\Omega)}
	&\leq \widetilde{C}(\Omega)\Big( \left((1+k_0^2)k_0^{-1} +\alpha^{-1/2}\right)\norme{\psi}_{1,k_0} \\
	&\hspace{2cm} +\norme{q}_{L^\infty(\Omega)} \norme{\nabla \psi_0}_{L^\infty(\Omega)} +\norme{g}_{L^2(\partial\Omega)} \Big).
\end{aligned} 
\end{equation}  
To apply the a priori estimate \eqref{eq:Uniform_H1_bound}, we recall that the $H^{-1/2}$ norm can be replaced by a $L^2$ norm (since $g\in L^2(\partial\Omega)$) and then, 
\begin{align*}
\norme{\psi}_{1,k_0}
	&\leq C(\Omega)\max\{k_0^{-1},\alpha^{-1/2}\}C_{\mathrm{s}}(k_0)\left(\norme{q}_{L^\infty(\Omega)}\norme{\nabla \psi_0}_{L^2(\Omega)}+\norme{g}_{L^{2}(\partial\Omega)} \right) \\
	&\leq C(\Omega)\max\{k_0^{-1},\alpha^{-1/2}\}C_{\mathrm{s}}(k_0)\max\{1,\sqrt{\abs{\Omega}}\}\left(\norme{q}_{L^\infty(\Omega)}\norme{\nabla \psi_0}_{L^{\infty}(\Omega)}+\norme{g}_{L^{2}(\partial\Omega)} \right)	
\end{align*}

Finally, combining the \JS{latter} expression with Equation \eqref{eq:L_infty_bound}, we obtain that the weak solution to the Helmholtz equation satisfies
\begin{align*}
\norme{\psi}_{C^0(\Omega)}
	&\leq \widetilde{C}(\Omega)\left(1 +\left((1+k_0^2)k_0^{-1} +\alpha^{-1/2}\right)\max\{k_0^{-1},\alpha^{-1/2}\}C_{\mathrm{s}}(k_0)\right) \\
	& \hspace{2cm} \times \left(\norme{q}_{L^\infty(\Omega)} \norme{\nabla \psi_0}_{L^\infty(\Omega)} +\norme{g}_{L^2(\partial\Omega)} \right),
\end{align*}
where $\widetilde{C}(\Omega) > 0$.
\end{proof}

\begin{rem}
\begin{enumerate}
\item For the one-dimensional Helmholtz problem, the a priori estimate \eqref{eq:Uniform_H1_bound} and the continuous embedding $H^1(I)\subset C^0(I)$ directly gives 
the continuity of $u$ over \JS{a give interval} $I$ 
\[
\norme{\psi}_{C^0(I)}\leq C \norme{\psi}_{1,k_0}\leq C(k_0)\left(\norme{q}_{L^\infty(\Omega)}\norme{\nabla \psi_0}_{L^\infty(\Omega)}+\norme{g}_{H^{-1/2}(\partial\Omega)} \right).
\]
It is worth noting that we do not need to assume that $g\in L^2(\partial\Omega)$.
\item For the two-dimensional Helmholtz problem with $q=0$, we can get the above $\mathcal{C}^0$ estimate from the embedding 
$H^2(\Omega)\hookrightarrow \mathcal{C}^0(\overline{\Omega})$ since 
\[
\norme{\psi}_{C^0(\Omega)}\leq C \norme{\psi}_{H^2(\Omega)},
\]
for a generic constant $C$. We can then see that the estimate \eqref{eq:Final_L_infty_bound} \JS{has actually}  the same dependance with respect to $k_0$ as the $H^2$-estimate \JS{in}~\cite[p. 677, Proposition 3.6]{Hetmaniuk_2007}.  
\end{enumerate}
\end{rem}

\subsection{$C^0$-bounds for the total and scattered waves} 
Thanks to Remark \ref{rem:Link_general_Helmholtz_scatt} and following the proof of Theorem \ref{thm:Boundedness_Helmholtz}, these bounds can be roughly obtained by setting $g=\dn{\psi_0}-\ii k_0\psi_0$ and omitting the $L^{\infty}$-norms in \eqref{eq:L_infty_bound} for the total wave $\psi_{tot}$, and simply by setting $g=0$ in the case the scattered wave $\psi_{sc}$. Using after the $H^1$-bounds from Remark \ref{rem:H_1_bounds_total_sc_waves}, we actually get
\begin{align*}
\norme{\psi_{tot}}_{\mathcal{C}^0(\Omega)} 
	&\leq \widetilde{C}(\Omega)k_0 \left(\left((1+k_0^2)k_0^{-1} +\alpha^{-1/2}\right)\max\{k_0^{-1},\alpha^{-1/2}\}C_\mathrm{s}(k_0)+ 1\right)  \\
\norme{\psi_{sc}}_{\mathcal{C}^0(\Omega)} 
	&\leq \widetilde{C}(\Omega)k_0 \left(\left((1+k_0^2)k_0^{-1} +\alpha^{-1/2}\right)\alpha^{-1/2}C_\mathrm{s}(k_0) +1\right)\norme{q}_{L^\infty(\Omega)}. 
\end{align*}

We emphasize that the previous estimates show that the scattered wave $\psi_{sc}$ vanishes in $\Omega$ if $q\to 0$. This is expected since, if $q=0$, there is no obstacle to scatter the incident wave which \JS{amounts} to saying that $\psi_{tot}=\psi_0$.


\section{Discrete optimization problem and convergence results}\label{sec:disc_pb}

This section is devoted to the finite element discretization of the optimization problem \eqref{eq:Optim_pbm}. We consider a quasi-uniform family of triangulations (see \cite[p. 76, Definition 1.140]{Ern_book_FE}) $\left\{\mathcal{T}_h\right\}_{h>0}$ of $\Omega$ and the corresponding finite element spaces 
\begin{align*}
\mathcal{V}_h & =\left\{\phi_h\in \mathcal{C}(\overline{\Omega})\ |\ \phi_h|_{T}\in \mathbb{P}_1(T),\ \forall T\in\mathcal{T}_h\right\}. 
\end{align*}
Note that thanks to Theorem \ref{thm:Boundedness_Helmholtz}, the solution to the general Helmholtz equation \eqref{eq:Helmholtz} is continuous, which motivates to use continuous piecewise linear finite elements.
We are going to look for \JS{a} discrete optimal design that \JS{belongs} to some finite element spaces $\mathcal{K}_h$ and we thus introduce the 
following set of discrete admissible parameters
\[
U_{h}=U\cap \mathcal{K}_h.
\]
The full discretization of the optimization problem \eqref{eq:Optim_pbm} then reads
\begin{equation}\label{eq:Discrete_optim_pbm}
\textrm{Find } q^*_h\in U_{h}\ \textrm{such that } 
\widetilde{J}(q^*_h)\leq \widetilde{J}(q_h),\ \forall q_h\in U_{h},
\end{equation}
where $\widetilde{J}(q_h)=J(q_h,\psi_h(q_h))$ is the reduced cost-functional and $\psi_h\vcentcolon=\psi_h(q_h)\in \mathcal{V}_h$ satisfies the discrete Helmholtz problem
\begin{equation}\label{eq:Discrete_Helmholtz}
a(q_h;\psi_h,\phi_h)=b(q_h;\phi_h),\ \forall \phi_h\in  \mathcal{V}_h.
\end{equation}
The existence of solution to Problem \eqref{eq:Discrete_Helmholtz} \PH{is going to be discussed in the next subsection.}

Before giving the definition of $\mathcal{K}_h$, we would like to discuss briefly the strategy for proving that the discrete optimal solution 
converges toward the continuous ones. To achieve this, we need to pass to the limit in inequality \eqref{eq:Discrete_optim_pbm}.
Since $J$ is only lower-semi-continuous with respect to the weak$^*$ topology of $BV$, we can only pass to the limit on one side of the inequality and the
continuity of $J$ is then going to be needed to pass to the limit on the other side to keep this inequality valid as $h\to 0$. 
\newline
We discuss first the case $U=U_\Lambda$ for which Theorem \ref{thm:existence_min} gives the existence of optimal $q$ but only if $\beta>0$. Since we have to pass to the limit in 
\eqref{eq:Discrete_optim_pbm}, we need that $\Lim{h\to 0}|D q_h|(\Omega)=|D q|(\Omega)$. Since the total variation is only continuous with respect to the strong topology of $BV$, we have to approximate any $q\in U_\Lambda$ by some $q_h\in U_{h}$ such that 
\[
\lim_{h\to 0}\norme{q-q_h}_{BV(\Omega)}=0.
\]
However, from \cite[p. 8, Example 4.1]{Bartels_2012}
there \JS{exists} an example of a $BV$-function $v$ that cannot be approximated by piecewise constant function $v_h$ over a given mesh in such a way that 
$\Lim{h\to 0}|D v_h|(\Omega)=|D v|(\Omega)$.
Nevertheless, if one consider an adapted mesh that depends on a given function $v\in BV(\Omega)\cap L^\infty(\Omega)$, we get the 
existence of piecewise constant function on this specific mesh that strongly converges in $BV$ toward $v$ (see \cite[p. 11, Theorem 4.2]{Belik_2003}). As a result, 
when considering $U=U_\Lambda$, we use the following discrete set of admissible parameters 
\[
\mathcal{K}_{h,1}=\left\{q_h\in L^\infty(\Omega)\ |\ q_h|_{T}\in \mathbb{P}_1(T),\ \forall T\in\mathcal{T}_h\right\}.
\]
Note that, from Theorem \cite[p. 10, Theorem 4.1 and Remark 4.2]{Belik_2003}, the set $U_{h}=U_{\Lambda}\cap \mathcal{K}_{h,1}$ defined above has the required density property hence motivated its introduction as a discrete set of admissible parameter. 
\newline
In the case $U=U_{\Lambda,\kappa}$, we will not need the density of $U_h$ \JS{for} the strong topology of $BV$ but only for the weak$^*$ topology. The discrete set of admissible \JS{parameters} is then going to be 
$U_{h}=U_{\Lambda,\kappa}\cap \mathcal{K}_{h,0}$ with
\[
\mathcal{K}_{h,0}=\left\{q_h\in L^\infty(\Omega)\ |\ q_h|_{T}\in \mathbb{P}_0(T),\ \forall T\in\mathcal{T}_h\right\}.
\]

We show below the convergence of discrete optimal solution to the continuous one for both cases highlighted above. 

\subsection{Convergence of the Finite element approximation}

We prove here some useful approximations results for any $U_h$ defined above. We have the following convergence result whose proof can be found in \cite[p. 22, Lemma 4.1]{Esterhazy_2012} (see also \cite[p. 10, Theorem 4.1]{Graham_variable_2018}).

\begin{theorem}\label{thm:Cv_FE}
Let $q_h\in U_{h}$ and $\psi(q_h)\in H^1(\Omega)$ be the solution to the variational problem
\[
a(q_h;\psi(q_h),\phi)=b(q_h,\phi),\ \forall \phi\in H^1(\Omega).
\]
Let $S^*:(q_h,f)\in  U_{h}\times L^2(\Omega)\mapsto S^*(q_h,f)=\psi^*\in H^1(\Omega)$ be the solution operator associated to the following problem 
\[
\mathrm{Find\ }\psi^*\in H^1(\Omega)\ \mathrm{such\ that\ } a(q_h;\phi,\psi^*)=(\phi,\overline{f})_{L^2(\Omega)},\ \forall \phi\in H^1(\Omega).
\]
Denote by $C_a$ the continuity constant of the bilinear form $a(q_h;\cdot,\cdot)$, which does not depend on $h$ since $q_h\in  U_{h}$, and define the adjoint approximation property by
\[
\delta(\mathcal{V}_h)\vcentcolon=\sup_{f\in L^2(\Omega)} \inf_{\phi_h\in\mathcal{V}_h}\frac{\norme{S^*(q_h,f)-\phi_h}_{1,k_0}}{\norme{f}_{L^2(\Omega)}}.
\]
Assume that the spaces $\mathcal{V}_h$ satisfies 
\begin{equation}\label{hyp:maillage}
2C_a k_0\delta(\mathcal{V}_h)\leq 1,
\end{equation}
then the solution $\psi_h(q_h)$ to Problem (\ref{eq:Discrete_Helmholtz}) satisfies 
\[
\norme{\psi(q_h)-\psi_h(q_h)}_{1,k_0}\leq 2 C_a \inf_{\phi_h\in\mathcal{V}_h} \norme{\psi(q_h)-\phi_h}_{1,k_0}.
\]
\end{theorem}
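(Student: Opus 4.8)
The plan is to derive this as a quantitative consequence of the standard Schatz-type argument for Helmholtz-like problems (sometimes called the "Aubin--Nitsche" or "duality" argument adapted to the non-coercive case). The key is that, although the sesquilinear form $a(q_h;\cdot,\cdot)$ only satisfies a G\aa rding inequality and not coercivity, the G\aa rding gap is controlled by the $L^2$-norm, and the $L^2$-norm of the error can be bounded by the $H^1$-norm of the error times the adjoint approximation quantity $\delta(\mathcal{V}_h)$. I would first record Galerkin orthogonality: since both $\psi(q_h)$ and $\psi_h(q_h)$ solve $a(q_h;\cdot,\phi_h)=b(q_h;\phi_h)$ for all $\phi_h\in\mathcal{V}_h$, we have
\begin{equation*}
a(q_h;\psi(q_h)-\psi_h(q_h),\phi_h)=0,\ \forall \phi_h\in\mathcal{V}_h.
\end{equation*}
Write $e\vcentcolon=\psi(q_h)-\psi_h(q_h)$ for the error. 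The continuity constant of $a(q_h;\cdot,\cdot)$ in the $\norme{\cdot}_{1,k_0}$-norm is $C_a$ and, crucially, does not depend on $h$ because $q_h\in U_h$ implies $\norme{q_h}_{L^\infty(\Omega)}\leq \max\{|\alpha-1|,\Lambda\}$ (this uses the earlier bounds on the admissible set); this is exactly where the hypothesis $q_h\in U_h$ is used.

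The core estimate proceeds as follows. Fix any $\phi_h\in\mathcal{V}_h$. Using the G\aa rding inequality \eqref{eq:Garding_ineq} applied to $e$, then Galerkin orthogonality to replace $\psi_h(q_h)$ by any $\phi_h$ in the first argument, we get
\begin{equation*}
\norme{e}_{1,k_0}^2 \leq \Re{a(q_h;e,e)}+2k_0^2\norme{e}_{L^2(\Omega)}^2 = \Re{a(q_h;e,\psi(q_h)-\phi_h)}+2k_0^2\norme{e}_{L^2(\Omega)}^2,
\end{equation*}
so that by continuity of $a$,
\begin{equation*}
\norme{e}_{1,k_0}^2 \leq C_a\norme{e}_{1,k_0}\norme{\psi(q_h)-\phi_h}_{1,k_0}+2k_0^2\norme{e}_{L^2(\Omega)}^2.
\end{equation*}
It remains to bound $k_0^2\norme{e}_{L^2(\Omega)}^2$. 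For this I would use the adjoint solution operator $S^*$: set $\psi^*=S^*(q_h,e)$, so $a(q_h;\phi,\psi^*)=(\phi,\overline{e})_{L^2(\Omega)}$ for all $\phi\in H^1(\Omega)$. Taking $\phi=e$ gives $\norme{e}_{L^2(\Omega)}^2=a(q_h;e,\psi^*)=a(q_h;e,\psi^*-\phi_h^*)$ for any $\phi_h^*\in\mathcal{V}_h$ by Galerkin orthogonality, hence $\norme{e}_{L^2(\Omega)}^2\leq C_a\norme{e}_{1,k_0}\inf_{\phi_h^*\in\mathcal{V}_h}\norme{\psi^*-\phi_h^*}_{1,k_0}\leq C_a\norme{e}_{1,k_0}\,\delta(\mathcal{V}_h)\norme{e}_{L^2(\Omega)}$ by the definition of $\delta(\mathcal{V}_h)$. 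Dividing gives $\norme{e}_{L^2(\Omega)}\leq C_a\delta(\mathcal{V}_h)\norme{e}_{1,k_0}$, and therefore $2k_0^2\norme{e}_{L^2(\Omega)}^2\leq 2C_a^2 k_0^2\delta(\mathcal{V}_h)^2\norme{e}_{1,k_0}^2$.

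**Conclusion and main obstacle.** Substituting back, $\norme{e}_{1,k_0}^2\leq C_a\norme{e}_{1,k_0}\norme{\psi(q_h)-\phi_h}_{1,k_0}+2C_a^2k_0^2\delta(\mathcal{V}_h)^2\norme{e}_{1,k_0}^2$. Under the mesh hypothesis \eqref{hyp:maillage}, $2C_ak_0\delta(\mathcal{V}_h)\leq 1$, so $2C_a^2k_0^2\delta(\mathcal{V}_h)^2\leq \tfrac12$; absorbing this term into the left side yields $\tfrac12\norme{e}_{1,k_0}^2\leq C_a\norme{e}_{1,k_0}\norme{\psi(q_h)-\phi_h}_{1,k_0}$, hence $\norme{e}_{1,k_0}\leq 2C_a\norme{\psi(q_h)-\phi_h}_{1,k_0}$. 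Taking the infimum over $\phi_h\in\mathcal{V}_h$ gives the claim. The step requiring the most care is the duality argument for the $L^2$-bound on the error: one must be careful that the adjoint form is $a(q_h;\cdot,\cdot)$ with the arguments in the correct order (the conjugate-linear slot must match), that the same Galerkin orthogonality applies to the adjoint residual, and that the constant $C_a$ appearing there is genuinely the $h$-independent continuity constant — all of which hinge on $q_h$ lying in the uniformly bounded admissible set $U_h$. The rest is bookkeeping, and indeed the statement says the proof can be found in \cite[p. 22, Lemma 4.1]{Esterhazy_2012}, so I would simply cite that reference after sketching the argument above for completeness.
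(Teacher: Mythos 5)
Your proof is correct and is essentially the paper's approach: the paper does not reproduce the argument but defers to \cite[p.~22, Lemma 4.1]{Esterhazy_2012} (and \cite{Graham_variable_2018}), whose content is exactly the Schatz-type duality argument you carry out --- Galerkin orthogonality, the G\aa rding inequality, the adjoint/$L^2$ duality bound via $\delta(\mathcal{V}_h)$, and absorption under the threshold condition \eqref{hyp:maillage}. You also correctly pinpoint that the $h$-independence of $C_a$ comes from the uniform $L^\infty$ bound on $q_h\in U_h$, which is the only point specific to this paper's setting.
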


\PH{We emphasize that the above error estimates in fact implies the existence and uniqueness of a solution to the discrete problem (\ref{eq:Discrete_Helmholtz}) (see \cite[Theorem 3.9]{lohndorf2011wavenumber}).}
In the case $q\in \mathcal{C}^{0,1}(\Omega)$ where $\Omega$ is a convex Lipschitz domain, \JS{Assumption}~(\ref{hyp:maillage}) has been discussed in \cite[p. 11, Theorem 4.3]{Graham_variable_2018} and roughly \JS{amounts} to say that (\ref{hyp:maillage}) holds if $k_0^2h$ is small enough.
Since the proof rely on $H^2$-regularity for a Poisson problem, we cannot readily extend the argument here since we can only expect to have $\psi\in H^1(\Omega)$ and that $S^*$ also depend on the meshsize. We can still show that \eqref{hyp:maillage} is satisfied for small enough $h$.

\begin{lemma}\label{lem:delta_V_h_0}
Assume that $q_h\in U_{h}$ weak$^*$ converges toward $q\in BV(\Omega)$. Then \eqref{hyp:maillage} is satisfied for small enough $h$.
\end{lemma}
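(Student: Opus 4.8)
The plan is to reduce \eqref{hyp:maillage} to the claim that $\delta(\mathcal{V}_h)\to 0$ as $h\to 0$: since $C_a$ and $k_0$ are fixed, once $\delta(\mathcal{V}_h)\to 0$ the inequality $2C_ak_0\delta(\mathcal{V}_h)\leq 1$ holds for all sufficiently small $h$. To prove $\delta(\mathcal{V}_h)\to 0$ I would argue by contradiction. If it fails, there exist $h_n\to 0$, a constant $c_0>0$, and (picking near-maximizers of the possibly non-attained supremum defining $\delta(\mathcal{V}_{h_n})$) functions $f_n\in L^2(\Omega)$ with $\norme{f_n}_{L^2(\Omega)}=1$ such that $\psi_n^*\vcentcolon=S^*(q_{h_n},f_n)$ satisfies $\inf_{\phi_h\in\mathcal{V}_{h_n}}\norme{\psi_n^*-\phi_h}_{1,k_0}\geq c_0$ for every $n$.

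First I would record a uniform a priori bound: the problem defining $S^*(q_h,\cdot)$ is, up to complex conjugation, a Helmholtz problem of the same structure as \eqref{eq:FV_Helmholtz} with the sign of the impedance term reversed, so it obeys the same G\aa rding inequality \eqref{eq:Garding_ineq} and the uniqueness result of \cite{Graham_variable_2018}; hence the proof of Theorem \ref{thm:Uniform_H1_bound} applies verbatim and gives $\norme{S^*(q_h,f)}_{1,k_0}\leq C_{\mathrm{s}}(k_0)\norme{f}_{L^2(\Omega)}$ with a constant independent of $h$ because $q_h\in U$. Thus $(\psi_n^*)_n$ is bounded in $H^1(\Omega)$, and along a subsequence (not relabelled) I would extract $\psi_n^*\rightharpoonup\psi^*$ in $H^1(\Omega)$, $\psi_n^*\to\psi^*$ strongly in $L^2(\Omega)$ and in $L^2(\partial\Omega)$ (Rellich and compactness of the trace), $f_n\rightharpoonup f$ in $L^2(\Omega)$, and — using that the weak$^*$ convergence hypothesis gives $q_{h_n}\to q$ in $L^1(\Omega)$ — also $q_{h_n}\to q$ a.e.\ in $\Omega$ after a further extraction. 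Then I would pass to the limit in $a(q_{h_n};\phi,\psi_n^*)=(\phi,\overline{f_n})_{L^2(\Omega)}$ for fixed $\phi\in H^1(\Omega)$: dominated convergence gives $(1+q_{h_n})\nabla\phi\to(1+q)\nabla\phi$ strongly in $L^2(\Omega)$ (the integrand is dominated by $(1+\max\{\Lambda,|\alpha-1|\})|\nabla\phi|^2\in L^1(\Omega)$), and pairing this against the weakly convergent $\nabla\overline{\psi_n^*}$, together with the strong $L^2$-type convergences of $\psi_n^*$ and the weak convergence of $f_n$, yields $a(q;\phi,\psi^*)=(\phi,\overline{f})_{L^2(\Omega)}$ for all $\phi$, i.e.\ $\psi^*=S^*(q,f)$.

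The crucial step is to upgrade $\psi_n^*\rightharpoonup\psi^*$ to strong convergence in $H^1(\Omega)$, for which I would reuse the final argument of the proof of Theorem \ref{thm:CTS_mapping}. Testing the $n$-th equation with $\phi=\psi_n^*$ gives $a(q_{h_n};\psi_n^*,\psi_n^*)=(\psi_n^*,\overline{f_n})_{L^2(\Omega)}\to(\psi^*,\overline{f})_{L^2(\Omega)}=a(q;\psi^*,\psi^*)$; taking real parts and using $\norme{\psi_n^*}_{L^2(\Omega)}\to\norme{\psi^*}_{L^2(\Omega)}$ then shows $\norme{\sqrt{1+q_{h_n}}\,\nabla\psi_n^*}_{L^2(\Omega)}^2\to\norme{\sqrt{1+q}\,\nabla\psi^*}_{L^2(\Omega)}^2$. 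Since also $\sqrt{1+q_{h_n}}\,\nabla\psi_n^*\rightharpoonup\sqrt{1+q}\,\nabla\psi^*$ in $L^2(\Omega)$ (pair the weakly convergent $\nabla\psi_n^*$ against the strongly convergent multiplier $\sqrt{1+q_{h_n}}$), convergence of the norms plus weak convergence force $\sqrt{1+q_{h_n}}\,\nabla\psi_n^*\to\sqrt{1+q}\,\nabla\psi^*$ strongly in $L^2(\Omega)$; dividing by $\sqrt{1+q_{h_n}}\geq\sqrt{\alpha}$ and invoking dominated convergence as in Theorem \ref{thm:CTS_mapping} gives $\nabla\psi_n^*\to\nabla\psi^*$ strongly in $L^2(\Omega)$, hence $\norme{\psi_n^*-\psi^*}_{1,k_0}\to 0$. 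Finally, since $\inf_{\phi_h\in\mathcal{V}_h}\norme{v-\phi_h}_{1,k_0}\to 0$ as $h\to 0$ for every fixed $v\in H^1(\Omega)$ (a standard property of the conforming spaces $\mathcal{V}_h$), the triangle inequality gives $\inf_{\phi_h\in\mathcal{V}_{h_n}}\norme{\psi_n^*-\phi_h}_{1,k_0}\leq\norme{\psi_n^*-\psi^*}_{1,k_0}+\inf_{\phi_h\in\mathcal{V}_{h_n}}\norme{\psi^*-\phi_h}_{1,k_0}\to 0$, contradicting $c_0>0$.

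I expect the strong $H^1$ convergence to be the main obstacle. Because $q$ is merely $BV$, no $H^{1+s}$ elliptic regularity (as in the Lipschitz setting of \cite{Graham_variable_2018,Graham_variable_2018_2}) is available, so $\delta(\mathcal{V}_h)$ cannot be bounded by a power of $h$ and a compactness argument is mandatory; and since $q_{h_n}$ appears in the principal part and varies with $n$, the plain weak-limit reasoning does not identify the limit of the gradient energy — this is precisely what the $\sqrt{1+q}$ factorization (already used in Theorem \ref{thm:CTS_mapping}) circumvents. A minor extra point to be careful with is that the supremum defining $\delta(\mathcal{V}_h)$ need not be attained, which is why one works with near-maximizers $f_n$.
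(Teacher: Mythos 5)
Your proof is correct, and its skeleton is the same as the paper's: approximate $S^*(q_h,f)$ by $S^*(q,f)$ using the continuity of the (adjoint) control-to-state map, approximate the fixed limit in $\mathcal{V}_h$ by density and interpolation, and conclude \eqref{hyp:maillage} by a triangle inequality. The paper does this in three lines, citing Theorem \ref{thm:CTS_mapping} "for the adjoint problem" and \cite[Corollary 1.122]{Ern_book_FE}; you instead unfold it into a compactness/contradiction argument with near-maximizing right-hand sides $f_n$, re-proving along the way the adjoint analogues of Theorems \ref{thm:Uniform_H1_bound} and \ref{thm:CTS_mapping} (uniform inf-sup bound, identification of the weak limit, and the $\sqrt{1+q}$ factorization to upgrade to strong $H^1$ convergence, now with weakly convergent data $f_n$ rather than a fixed right-hand side). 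What your version buys is a genuinely more careful treatment of the supremum over $f$ in the definition of $\delta(\mathcal{V}_h)$: the paper's first limit is stated for a fixed $f$, and its second displayed limit is uniform in $f$ only via an implicit compactness of $S^*(q,\cdot):L^2(\Omega)\to H^1(\Omega)$ (or an argument exactly like yours), so your near-maximizer device closes a uniformity gap that the paper leaves to the reader; the price is length, and the observation (which you make) that the sup need not be attained is precisely the point that forces the near-maximizers. Your remark that only the sign of the impedance term changes in the adjoint form, so the G\aa rding inequality and the uniqueness result of \cite{Graham_variable_2018} still apply, is the right justification for transferring the $q$-uniform stability to $S^*$.
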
 

\begin{proof}
Note first that Theorem \ref{thm:CTS_mapping} also holds for the adjoint problem and thus 
\[
\lim_{h\to0}\norme{S^*(q_h,f)-S^*(q,f)}_{1,k_0}=0.
\]
Using the density of smooth \JS{functions} in $H^1$ and the properties of the piecewise linear interpolant \cite[p. 66, Corollary 1.122]{Ern_book_FE}, we have that 
\[
\lim_{h\to0}\left(\sup_{f\in L^2(\Omega)} \inf_{\phi_h\in\mathcal{V}_h}\frac{\norme{S^*(q,f)-\phi_h}_{1,k_0}}{\norme{f}_{L^2(\Omega)}}\right)=0,
\]
and thus a triangular inequality shows that \eqref{hyp:maillage} holds for small enough $h$.
\end{proof}

We can now prove a discrete counterpart to Theorem \ref{thm:CTS_mapping}.
\begin{theorem}\label{thm:CTS_discrete}
Let $(q_h)_h\subset U_{h}$ be a sequence satisfying $\norme{q_h}_{BV(\Omega)}\leq M$ and whose weak$^*$ limit in $BV(\Omega)$ is denoted by $q$. Let $(\psi_h(q_h))_h$ be the sequence of discrete solutions to Problem \eqref{eq:Discrete_Helmholtz}. Then $\psi(q_h)$ converges, as $h$ goes to $0$, strongly in $H^1(\Omega)$ towards $\psi(q)$ satisfying Problem \eqref{eq:FV_Helmholtz}.
\end{theorem}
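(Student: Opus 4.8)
The plan is to combine the finite element error estimate of Theorem~\ref{thm:Cv_FE} with the weak$^*$-to-strong continuity of the continuous control-to-state map (Theorem~\ref{thm:CTS_mapping}). Writing $q$ for the weak$^*$ limit of $(q_h)_h$ in $BV(\Omega)$ --- which belongs to $U$ since $U$ is weak$^*$ closed --- I would start from the triangle inequality
\[
\norme{\psi_h(q_h)-\psi(q)}_{1,k_0}\leq \norme{\psi_h(q_h)-\psi(q_h)}_{1,k_0}+\norme{\psi(q_h)-\psi(q)}_{1,k_0},
\]
where $\psi(q_h)\in H^1(\Omega)$ denotes the exact solution of Problem~\eqref{eq:FV_Helmholtz} with parameter $q_h$. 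The second term on the right tends to $0$ as $h\to0$: indeed $(q_h)_h$ is bounded in $BV(\Omega)$ by hypothesis and weak$^*$ converges to $q$, so Theorem~\ref{thm:CTS_mapping} applies to the whole sequence and gives $\psi(q_h)\to\psi(q)$ strongly in $H^1(\Omega)$. It thus remains to handle the consistency term $\norme{\psi_h(q_h)-\psi(q_h)}_{1,k_0}$.

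To estimate that term, I would first observe that Theorem~\ref{thm:Cv_FE} is applicable for all $h$ small enough: the continuity constant $C_a$ of $a(q_h;\cdot,\cdot)$ can be chosen independent of $h$ because $\norme{q_h}_{L^\infty(\Omega)}\leq\Lambda$ for every $h$, and Lemma~\ref{lem:delta_V_h_0} ensures that the mesh condition~\eqref{hyp:maillage} holds once $h$ is small enough (this in particular also yields existence and uniqueness of $\psi_h(q_h)$). Theorem~\ref{thm:Cv_FE} then gives
\[
\norme{\psi_h(q_h)-\psi(q_h)}_{1,k_0}\leq 2C_a\inf_{\phi_h\in\mathcal{V}_h}\norme{\psi(q_h)-\phi_h}_{1,k_0},
\]
and a triangle inequality inside the infimum produces
\[
\inf_{\phi_h\in\mathcal{V}_h}\norme{\psi(q_h)-\phi_h}_{1,k_0}\leq \norme{\psi(q_h)-\psi(q)}_{1,k_0}+\inf_{\phi_h\in\mathcal{V}_h}\norme{\psi(q)-\phi_h}_{1,k_0}.
\]
The first summand again goes to $0$ by Theorem~\ref{thm:CTS_mapping}, while the second --- the best-approximation error of the \emph{fixed} function $\psi(q)\in H^1(\Omega)$ by the spaces $\mathcal{V}_h$ --- goes to $0$ by density of smooth functions in $H^1(\Omega)$ combined with the interpolation estimate for the piecewise linear interpolant \cite[p.~66, Corollary~1.122]{Ern_book_FE}, exactly as in the proof of Lemma~\ref{lem:delta_V_h_0}. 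Collecting the three limits and using that $\norme{\cdot}_{1,k_0}$ is equivalent to the $H^1(\Omega)$-norm finishes the argument.

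I do not expect a genuine obstacle here, since the statement is essentially a bookkeeping combination of Theorems~\ref{thm:CTS_mapping} and~\ref{thm:Cv_FE} with Lemma~\ref{lem:delta_V_h_0}. The points that require a little care are: checking that $C_a$ does not degenerate as $h\to0$ (which follows from the uniform $L^\infty$-bound on $q_h$, hence the uniform continuity estimate for $a$); making sure the best-approximation error $\inf_{\phi_h\in\mathcal{V}_h}\norme{\psi(q)-\phi_h}_{1,k_0}$ vanishes even though $\psi(q)$ is a priori only $H^1$-regular (handled by the density argument above rather than by any interpolation estimate for $\psi(q)$ itself); and keeping track of the fact that two limiting procedures --- $h\to0$ for the discretization and the weak$^*$ convergence $q_h\rightharpoonup q$ --- occur simultaneously, which is legitimate precisely because the estimates above are uniform in $h$ on the range where~\eqref{hyp:maillage} holds.
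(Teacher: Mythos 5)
Your proposal is correct and follows essentially the same route as the paper's proof: the same triangle inequality, the quasi-optimality bound of Theorem~\ref{thm:Cv_FE} made applicable for small $h$ via Lemma~\ref{lem:delta_V_h_0}, the insertion of $\psi(q)$ inside the best-approximation term, and the conclusion via Theorem~\ref{thm:CTS_mapping} together with density of smooth functions in $H^1(\Omega)$. Your additional remarks on the uniformity of $C_a$ in $h$ are consistent with (indeed slightly more explicit than) the paper's argument.
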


\begin{proof}
For $h$ small enough, Lemma \ref{lem:delta_V_h_0} ensures that \eqref{hyp:maillage} holds and a triangular inequality then yields
\begin{align*}
\norme{\psi_h(q_h)-\psi(q)}_{1,k_0}
	&\leq \norme{\psi_h(q_h)-\psi(q_h)}_{1,k_0}+\norme{\psi(q_h)-\psi(q)}_{1,k_0}\\
	&\leq 2 C_a \inf_{\phi_h\in\mathcal{V}_h} \norme{\psi(q_h)-\phi_h}_{1,k_0}+\norme{\psi(q_h)-\psi(q)}_{1,k_0} \\
	&\leq (1+2C_a) \norme{\psi(q_h)-\psi(q)}_{1,k_0}+2 C_a \inf_{\phi_h\in\mathcal{V}_h} \norme{\psi(q)-\phi_h}_{1,k_0}.
\end{align*}
Theorem \ref{thm:CTS_mapping} gives that the first term above goes to zero as $h\to0$. For the second one, we can use the density of smooth function in $H^1$ to get that it goes to zero as well.
\end{proof}

\subsection{Convergence of the discrete optimal solution: Case $U_{h}=U_{\Lambda}\cap \mathcal{K}_{h,1}$}

We are now in \JS{a} position to prove the convergence of a discrete optimal design towards a continuous one in the case 
\[
U=U_\Lambda,\ U_h=U_{\Lambda}\cap \mathcal{K}_{h,1}.
\]
Hence the set of discrete control is composed of piecewise linear function on $\mathcal{T}_h$.


\begin{theorem}\label{thm:Cv_discrete_control}
Assume that $(A1)-(A2)-(A3)$ from Theorem \ref{thm:existence_min} hold and that the cost function $J_0:(q,\psi)\in U_\Lambda\times H^1(\Omega)\mapsto J_0	(q,\psi)\in \R$ is continuous with respect to the $(\mathrm{weak}^*,\mathrm{strong})$ topology of $BV(\Omega)\times H^1 (\Omega)$. Let $(q_h^*,\psi_h(q_h^*))\in U_{\Lambda,h}\times \mathcal{V}_h$ be an optimal pair of (\ref{eq:Discrete_optim_pbm}).
Then the sequence $(q^*_h)_h\subset U_\Lambda$ is bounded and there exists $q^*\in U_\Lambda$ such that $q^*_h\rightharpoonup q^*$ weakly$^*$ in $BV(\Omega)$, $\psi(q_h^*)\to \psi(q^*)$ strongly in $H^1(\Omega)$ and
\[
\widetilde{J}(q^*)\leq \widetilde{J}(q),\ \forall q\in U_{\Lambda}.
\]
Hence any accumulation point of $(q_h^*,\psi_h(q_h^*))$ is an optimal pair for Problem \eqref{eq:Optim_pbm}.
\end{theorem}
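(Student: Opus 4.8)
The plan is to run the direct-method argument of Theorem~\ref{thm:existence_min} along the sequence of \emph{discrete} optima $(q_h^*)_h$: first obtain a uniform $BV$-bound and extract a weakly$^*$-convergent subsequence, then use the discrete-to-continuous state convergence of Theorem~\ref{thm:CTS_discrete} and the continuity/semicontinuity properties of $J$ to pass to the limit in the discrete optimality inequality $\widetilde J(q_h^*)\leq\widetilde J(q_h)$. The two sides are treated differently, exactly as anticipated in Section~\ref{sec:disc_pb}: the side carrying $q_h^*$ is handled by lower semi-continuity ($(A3)$ together with the weak$^*$ lower semi-continuity of $q\mapsto|Dq|(\Omega)$), while the side carrying a recovery sequence for an arbitrary competitor $q\in U_\Lambda$ is handled by the extra $(\mathrm{weak}^*,\mathrm{strong})$-continuity hypothesis on $J_0$.

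First I would establish $\sup_h\norme{q_h^*}_{BV(\Omega)}<\infty$. Pick any constant $c\in[\alpha-1,\Lambda]$ (the interval is nonempty since $\Lambda\geq\alpha-1$); then $c\in U_\Lambda\cap\mathcal{K}_{h,1}=U_h$ for every $h$ and $|Dc|(\Omega)=0$, so discrete optimality gives $\widetilde J(q_h^*)\leq\widetilde J(c)=J_0(c,\psi_h(c))$. Theorem~\ref{thm:CTS_discrete} applied to the constant sequence $q_h=c$ yields $\psi_h(c)\to\psi(c)$ strongly in $H^1(\Omega)$, so $J_0(c,\psi_h(c))$ converges and is bounded by some $C_0$. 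Then (A1)--(A2) give $\beta|Dq_h^*|(\Omega)=\widetilde J(q_h^*)-J_0(q_h^*,\psi_h(q_h^*))\leq C_0-m$, while the pointwise bound $\alpha-1\leq q_h^*\leq\Lambda$ controls $\norme{q_h^*}_{L^1(\Omega)}$; together these bound $\norme{q_h^*}_{BV(\Omega)}$ uniformly in $h$.

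By the compact embedding $BV(\Omega)\subset L^1(\Omega)$ and weak$^*$ sequential compactness, a subsequence (not relabelled) satisfies $q_h^*\rightharpoonup q^*$ weak$^*$ in $BV(\Omega)$ with $q^*\in U_\Lambda$ (which is weak$^*$ closed), and Theorem~\ref{thm:CTS_discrete} then gives $\psi_h(q_h^*)\to\psi(q^*)$ strongly in $H^1(\Omega)$. To prove optimality, fix $q\in U_\Lambda$ and use the density property of $U_h=U_\Lambda\cap\mathcal{K}_{h,1}$ (\cite[Theorem~4.1, Remark~4.2]{Belik_2003}) to pick $q_h\in U_h$ with $\norme{q_h-q}_{BV(\Omega)}\to0$; this gives $q_h\rightharpoonup q$ weak$^*$ in $BV(\Omega)$, $|Dq_h|(\Omega)\to|Dq|(\Omega)$ (since $|D\cdot|(\Omega)$ is a seminorm on $BV(\Omega)$), and $\psi_h(q_h)\to\psi(q)$ strongly in $H^1(\Omega)$ by Theorem~\ref{thm:CTS_discrete}. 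Starting from
\[
J_0(q_h^*,\psi_h(q_h^*))+\beta|Dq_h^*|(\Omega)\leq J_0(q_h,\psi_h(q_h))+\beta|Dq_h|(\Omega),
\]
I take $\liminf_{h\to0}$: the right-hand side converges to $J_0(q,\psi(q))+\beta|Dq|(\Omega)=\widetilde J(q)$ by $(\mathrm{weak}^*,\mathrm{strong})$-continuity of $J_0$ and convergence of the total variations, while the left-hand side is bounded below by $J_0(q^*,\psi(q^*))+\beta|Dq^*|(\Omega)=\widetilde J(q^*)$ using (A3) and the weak$^*$ lower semi-continuity of $|D\cdot|(\Omega)$. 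Hence $\widetilde J(q^*)\leq\widetilde J(q)$ for every $q\in U_\Lambda$, so $(q^*,\psi(q^*))$ is optimal for \eqref{eq:Optim_pbm}; applying the same reasoning to any weak$^*$-convergent subsequence shows every accumulation point of $(q_h^*,\psi_h(q_h^*))$ is an optimal pair.

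I expect the main obstacle to be the recovery-sequence step: one needs $q_h\to q$ \emph{strongly} in $BV(\Omega)$, not merely weakly$^*$, so that the penalty term $\beta|Dq_h|(\Omega)$ passes to the limit as an equality on the right-hand side — this is precisely why the discrete design space is taken piecewise linear on $BV$-adapted triangulations rather than piecewise constant on a fixed mesh, for which the total variation would in general only be weakly$^*$ lower semi-continuous and the chain of inequalities would break. One must also check that the recovery sequence respects the constraints $\alpha-1\leq q_h\leq\Lambda$, hence stays in $U_\Lambda$, which holds because nodal interpolation preserves pointwise bounds; everything else is routine bookkeeping with the compactness of $BV(\Omega)\subset L^1(\Omega)$ and the already-established convergence results.
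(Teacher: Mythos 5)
Your proposal is correct and follows essentially the same route as the paper: a constant admissible competitor plus (A1)--(A2) and the discrete state convergence of Theorem~\ref{thm:CTS_discrete} give the uniform $BV$ bound, weak$^*$ compactness and weak$^*$ closedness of $U_\Lambda$ give $q^*$, and the optimality inequality is passed to the limit using (A3) with weak$^*$ lower semi-continuity of the total variation on one side and a strongly $BV$-convergent recovery sequence in $U_\Lambda\cap\mathcal{K}_{h,1}$ (the Belik--Luskin density result) with the $(\mathrm{weak}^*,\mathrm{strong})$-continuity of $J_0$ on the other. The only differences are cosmetic (a general constant $c\in[\alpha-1,\Lambda]$ instead of the paper's $q_\Lambda=\Lambda$, and a more explicit split of $J$ into $J_0$ plus the penalty in the limit passage), and you even correct a typo in the paper's proof by approximating the competitor $q$ rather than $q^*$.
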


\begin{proof}
Let $q_{\Lambda}\in U_{\Lambda,h}$ be given as
\[
q_{\Lambda}(x)=\Lambda,\ \forall x\in\Omega. 
\]
Then $D q_{\Lambda}=0$. Since $\psi_h(q_{\Lambda})$ is well-defined and converges toward $\psi(q_{\Lambda})$ strongly in $H^1$ (see Theorem \ref{thm:Cv_discrete_control}), we have that 
\[
\widetilde{J}(q_{\Lambda})=J(q_{\Lambda},\psi_h(q_{\Lambda}))=J_0(q_\Lambda,\psi_h(q_{\Lambda})) \xrightarrow[h\to 0]{} J_0(q_\Lambda,\psi(q_{\Lambda})).
\]
As a result, using that $(q_h^*,\psi_h(q_h^*))$ is an optimal pair to Problem \eqref{eq:Discrete_Helmholtz}, we get that 
\[
\beta |D(q_h^*)|(\Omega) 
	\leq -J_0(q_h^*,\psi_h(q_h^*))+J(q_{\Lambda},\psi_h(q_{\Lambda}))		\leq -m +J_0(q_{\Lambda},\psi_h(q_{\Lambda})),
\]
and thus the sequence $(q_h^*)_h\subset U_{\Lambda,h}\subset U_{\Lambda}$ is bounded in $BV(\Omega)$ uniformly with respect to $h$. \JS{We can then assume that it converges and denote by} $q^*\in U_\Lambda$ its weak$^*$ limit and Theorem \ref{thm:CTS_discrete} then shows that $\psi_h(q_h^*)\to \psi(q^*)$ strongly in $H^1(\Omega)$. The lower semi-continuity of $J$ ensures that 
\[
J(q^*,\psi(q^*))
	=\widetilde{J}(q^*)\leq \liminf_{h\to 0}\widetilde{J}(q_h^*)
	=\liminf_{h\to 0}J(q_h^*,\psi_h(q_h^*)).
\]
Now, let $q\in U_\Lambda$, using the density of smooth \JS{functions} in $BV$, one gets that there exists a sequence $q_h\in U_{\Lambda,h}$ such that $\norme{q_h-q^*}_{BV(\Omega)}\to 0$ (see also \cite[p. 10, Remark 4.2]{Bartels_2012}). From Theorem \ref{thm:CTS_discrete}, one gets $\psi_h(q_h)\to \psi(q)$ strongly in $H^1(\Omega)$ and the continuity of $J$ ensure that 
$
\widetilde{J}(q_h)\rightarrow \widetilde{J}(q).
$
Since $\widetilde{J}(q_h^*)\leq \widetilde{J}(q_h)$ for all $q_h\in U_{\Lambda,h}$, one gets by passing to the inf-limit that
\[
\widetilde{J}(q^*)\leq \liminf_{h\to 0}\widetilde{J}(q_h^*)\leq \liminf_{h\to 0}\widetilde{J}(q_h) = \widetilde{J}(q),\ \forall q\in U_\Lambda,
\] 
and \JS{the proof is complete.}
\end{proof}

\subsection{Convergence of the discrete optimal solution: Case $U_{h}=U_{\Lambda,\kappa}\cap \mathcal{K}_{h,0}$}

We are now in \JS{a} position to prove the convergence of discrete optimal design toward continuous one in the case 
$$
U=U_{\Lambda,\kappa},\ U_h=U_{\Lambda,\kappa}\cap \mathcal{K}_{h,0}.
$$
Hence the set of discrete control is composed of piecewise constant \JS{functions} on $\mathcal{T}_h$ that satisfy 
$$
\forall q_h\in U_h,\ \norme{q_h}_{BV(\Omega)}\leq 2\max(\Lambda,\kappa,|\alpha-1|).
$$
We can compute explicitly the previous norm by integrating by parts the total variation (see e.g. \cite[p. 7, Lemma 4.1]{Bartels_2012}). 
This reads
$$
\forall q_h\in U_h,\ |Dq_h|(\Omega)=\sum_{F\in \mathcal{F}^i} |F|| [q_h]|_{F}|,
$$
where $\mathcal{F}^i$ is the set of interior faces and $| [q_h]|_{F}$ is the jump of $q_h$ on the interior face $F=\partial T_1\cap \partial T_2$ \JS{meaning that $| [q_h]|_{F}=|q_h|_{T_1}-|q_h|_{T_2}$, where $|\cdot|_{T_i}$ denotes the value of the a finite element function on the face $T_i$}.
Note then that any $q_h\in U_h$ can only have either a finite number of discontinuity or jumps that are not too large.

\begin{theorem}\label{thm:Cv_discrete_control_U_lambda_kappa}
Assume that $\beta=0$ and $(A2)-(A3)$ from Theorem \ref{thm:existence_min} hold and that the cost function $J:(q,\psi)\in U_\Lambda\times H^1(\Omega)\mapsto J(q,\psi)\in \R$ is continuous with respect to the $(\mathrm{weak}^*,\mathrm{strong})$ topology of $BV(\Omega)\times H^1 (\Omega)$. Let $(q_h^*,\psi_h(q_h^*))\in U_{h}\times \mathcal{V}_h$ be an optimal pair of \eqref{eq:Discrete_optim_pbm}.
Then the sequence $(q^*_h)_h\subset U_{\Lambda,\kappa}$ is bounded and there exists $q^*\in U_{\Lambda,\kappa}$ such that $q^*_h\rightharpoonup q^*$ weakly$^*$ in $BV(\Omega)$, $\psi(q_h^*)\to \psi(q^*)$ strongly in $H^1(\Omega)$ and
\[
\widetilde{J}(q^*)\leq \widetilde{J}(q),\ \forall q\in U_{\Lambda}.
\]
Hence any accumulation point of $(q_h^*,\psi_h(q_h^*))$ is an optimal pair for Problem \eqref{eq:Optim_pbm}.
\end{theorem}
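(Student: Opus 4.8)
The plan is to follow the scheme of the proof of Theorem~\ref{thm:Cv_discrete_control}, exploiting that, in contrast with the case $U=U_\Lambda$, every element of $U_{\Lambda,\kappa}$ is automatically bounded in $BV(\Omega)$ by $2\max(\Lambda,\kappa,|\alpha-1|)$, so that no penalization is needed ($\beta=0$) and the minimizing sequence is bounded for free. Accordingly, the sequence $(q_h^*)_h\subset U_{\Lambda,\kappa}$ of discrete optima is bounded in $BV(\Omega)$ uniformly in $h$, and the compact embedding $BV(\Omega)\subset L^1(\Omega)$ together with weak$^*$ compactness yields a subsequence (not relabeled) and some $q^*$ with $q_h^*\rightharpoonup q^*$ weak$^*$ in $BV(\Omega)$. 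Since $U_{\Lambda,\kappa}$ is convex and weak$^*$ closed (equivalently, the pointwise bounds survive the $L^1$ limit while $|Dq^*|(\Omega)\le\liminf_h|Dq_h^*|(\Omega)\le\kappa$ by lower semicontinuity of the total variation), we obtain $q^*\in U_{\Lambda,\kappa}$.

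Next I would invoke Theorem~\ref{thm:CTS_discrete} to get $\psi_h(q_h^*)\to\psi(q^*)$ strongly in $H^1(\Omega)$, where $\psi(q^*)$ solves \eqref{eq:FV_Helmholtz}, so that $(q^*,\psi(q^*))$ is admissible for Problem~\eqref{eq:Optim_pbm}. Combining $q_h^*\rightharpoonup q^*$ weak$^*$ with this strong convergence and the assumed $(\mathrm{weak}^*,\mathrm{strong})$-continuity of $J$ (here $J=J_0$, since $\beta=0$) gives $\widetilde{J}(q_h^*)=J(q_h^*,\psi_h(q_h^*))\to J(q^*,\psi(q^*))=\widetilde{J}(q^*)$. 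This settles the left-hand side of the discrete optimality inequality \eqref{eq:Discrete_optim_pbm}; no separate lower semicontinuity argument is needed, since full continuity of $J$ is available.

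It then remains to handle the right-hand side. Fix an arbitrary $q\in U_{\Lambda,\kappa}$ and build a recovery sequence $q_h\in U_h=U_{\Lambda,\kappa}\cap\mathcal{K}_{h,0}$ with $q_h\rightharpoonup q$ weak$^*$ in $BV(\Omega)$: I would take $q_h$ to be the piecewise-constant function equal on each $T\in\mathcal{T}_h$ to the average of $q$ over $T$, which preserves the pointwise bounds $\alpha-1\le q_h\le\Lambda$, satisfies $q_h\to q$ in $L^1(\Omega)$ and $Dq_h\rightharpoonup Dq$ in $\mathcal{M}_\mathrm{b}(\Omega,\R^2)$, and, crucially, obeys $|Dq_h|(\Omega)\le|Dq|(\Omega)\le\kappa$, so that indeed $q_h\in U_h$. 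Applying Theorem~\ref{thm:CTS_discrete} once more gives $\psi_h(q_h)\to\psi(q)$ strongly in $H^1(\Omega)$, hence $\widetilde{J}(q_h)\to\widetilde{J}(q)$ by continuity of $J$. Passing to the limit in the discrete optimality $\widetilde{J}(q_h^*)\le\widetilde{J}(q_h)$ and using the two convergences just established then yields $\widetilde{J}(q^*)\le\widetilde{J}(q)$ for every $q\in U_{\Lambda,\kappa}$; since $(q^*,\psi(q^*))$ is admissible, this is exactly the asserted optimality, and it shows that any accumulation point of $(q_h^*,\psi_h(q_h^*))$ is optimal for \eqref{eq:Optim_pbm}.

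The step I expect to be the main obstacle is the weak$^*$ density used above, namely producing $q_h\in U_h$ with $q_h\rightharpoonup q$ weak$^*$ in $BV(\Omega)$ while keeping the hard constraint $|Dq_h|(\Omega)\le\kappa$ intact. The $L^\infty$ bounds and the $L^1$ convergence are immediate for local averages, but the total-variation bound $|Dq_h|(\Omega)\le|Dq|(\Omega)$ for the piecewise-constant projection must be quoted carefully from the finite-element $BV$ literature (see \cite{Bartels_2012,Belik_2003}). This is precisely where the choice $\mathcal{K}_{h,0}$ of piecewise-constant controls pays off, since only weak$^*$ (and not strong) $BV$ convergence of the recovery sequence is required here, in contrast with the situation in Theorem~\ref{thm:Cv_discrete_control}. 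Everything else reduces to a routine reuse of Theorem~\ref{thm:CTS_discrete} and of the continuity hypothesis on $J$.
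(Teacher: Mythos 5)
Your proof follows the same architecture as the paper's: the uniform bound $\norme{q_h^*}_{BV(\Omega)}\leq 2\max(\Lambda,\kappa,|\alpha-1|)$ built into $U_{\Lambda,\kappa}$, extraction of a weak$^*$ limit $q^*\in U_{\Lambda,\kappa}$, Theorem \ref{thm:CTS_discrete} for the strong $H^1$ convergence of the states, continuity of $J$, a recovery sequence for an arbitrary admissible $q$, and passage to the limit in $\widetilde{J}(q_h^*)\leq\widetilde{J}(q_h)$. The one place where you depart from the paper is also where your argument has a genuine gap: the explicit recovery sequence. You take $q_h$ to be the cell-average projection of $q$ onto $\mathcal{K}_{h,0}$ and assert that it "crucially obeys $|Dq_h|(\Omega)\leq|Dq|(\Omega)\leq\kappa$". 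This total-variation nonexpansiveness of the cell-average projection is false on triangular meshes. For instance, take $q(x,y)=x$ on the unit square triangulated by cutting each of the $N^2$ subsquares along a diagonal: the averages over the two triangles of a square differ by $h/3$, and summing $|F|\,|[q_h]_F|$ over diagonal, vertical and horizontal interior faces gives $|Dq_h|(\Omega)\approx \tfrac{\sqrt{2}}{3}+\tfrac{2}{3}+\tfrac{1}{3}>1=|Dq|(\Omega)$, uniformly in $h$. In general one only has $|Dq_h|(\Omega)\leq C\,|Dq|(\Omega)$ with a mesh-geometry-dependent constant $C>1$, so whenever $|Dq|(\Omega)$ is close to $\kappa$ your $q_h$ may violate the hard constraint $|Dq_h|(\Omega)\leq\kappa$ and thus fail to belong to $U_h=U_{\Lambda,\kappa}\cap\mathcal{K}_{h,0}$ — which is exactly the membership your application of Theorem \ref{thm:CTS_discrete} and of the discrete optimality inequality requires.

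The paper does not attempt such an explicit construction: it simply invokes the weak$^*$ approximation of $BV$ functions by piecewise constants within the constraint set, citing \cite{Bartels_2012}, to obtain $q_h\in U_h$ with $q_h\rightharpoonup q$ weak$^*$ in $BV(\Omega)$, and you correctly identified that only weak$^*$ (not strong) convergence is needed here, in contrast with Theorem \ref{thm:Cv_discrete_control}. To repair your version you would either have to quote a projection or quasi-interpolation operator with TV-stability constant exactly $1$ on the meshes at hand (not available for general triangulations), or first replace $q$ by an approximation whose total variation is strictly below $\kappa$ before projecting, so that the mesh-dependent inflation is absorbed as $h\to0$; neither step is automatic, and this is precisely the delicate density-in-$U_h$ statement on which the whole recovery argument rests. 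Everything else in your proposal — the two uses of Theorem \ref{thm:CTS_discrete}, replacing lower semicontinuity by the assumed continuity of $J$, and the final limit passage — is correct and matches the paper.
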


\begin{proof}
Since $(q_h^*)_h$ belong to $U_h$, it satisfies $\norme{q_h}_{BV(\Omega)}\leq 2\max(\Lambda,\kappa,|\alpha-1|)$ and is thus bounded uniformly with respect to $h$. We denote by $q^*\in U_{\Lambda,\kappa}$
its weak$^*$ limit. Theorem \ref{thm:Cv_discrete_control} then shows that $\psi_h(q_h^*)$ converges strongly in $H^1(\Omega)$ toward $\psi(q^*)$. 
\newline
Now, let $q\in U_{\Lambda,\kappa}$, using the density of smooth function in $BV$, one gets that there exists a sequence $q_h\in U_{h}$ such that $q_h\rightharpoonup q$ weak$^*$ in $BV(\Omega)$ (see also \cite[Introduction]{Bartels_2012}). From Theorem \ref{thm:CTS_discrete}, one gets $\psi_h(q_h)\to \psi(q)$ strongly in $H^1(\Omega)$ and the continuity of $J$ ensure that 
$
\widetilde{J}(q_h)\rightarrow \widetilde{J}(q).
$
The proof can then be done as in Theorem \ref{thm:Cv_discrete_control}.
\end{proof}


\section{Numerical experiments}\label{sec:numerics}
In this section, we tackle numerically the optimization problem \eqref{eq:Optim_pbm}, when it is constrained to the total amplitude $\psi_{tot}$ described by \eqref{ampli-helmholtz}. We focus on two examples: a \textit{damping problem}, where the computed bathymetry optimally reduces the magnitude of the incoming waves; and an \textit{inverse problem}, in which we recover the bathymetry from the observed magnitude of the waves.

In what follows, we consider an incident plane wave $\psi_{0}(x)=\mathrm{e}^{\ii k_0x\cdot \vec{d}}$ propagating in the direction $\vec{d}=(0\;\; 1)^{\t}$, with 
\[ k_0 = \dfrac{\omega_0}{\sqrt{g z_0}},\, 
\omega_0	= \dfrac{2\pi}{T_0}, \, T_0=20, \, g=9.81, \, z_0=3.\]
For the space domain, we set $\Omega=[0,L]^2$, where 
$L = \frac{10\pi}{k_0}$.
We also impose a $L^\infty$-constraint on the variable $q$, namely that $q\geq -0.9$.

\subsection{Numerical methods}
We discretize the space domain by using a structured triangular mesh of 8192 elements, that is a space step of $\Delta x=\Delta y=8.476472$. 

For the discretization of $\psi_{sc}$, we use a $\mathbb{P}^1$-finite element method. The optimized parameter $q$ is discretized through a $\mathbb{P}^0$-finite element method. Hence, on each triangle, the approximation of $\psi_{sc}$ is determined by three nodal values, located at the edges of the triangle, and the approximation of $q$ is determined by one nodal value, placed at the center of gravity of the triangle.

On the other hand, we perform the optimization through a subspace trust-region method, based on the interior-reflective Newton method described in~\cite{algo1} and~\cite{algo2}. Each iteration involves the solving of a linear system using the method of preconditioned conjugate gradients, for which we supply the Hessian multiply function. The computations are achieved with \texttt{MATLAB} (version 9.4.0.813654 (R2018a)). 

{
\begin{rem}
We emphasize that the setting of our numerical experiments presented below does not meet all the assumptions of Theorems \ref{thm:Cv_discrete_control}
and \ref{thm:Cv_discrete_control_U_lambda_kappa} which state {the convergence of the
optimum of the discretized/discete problem toward the optimum of the
continuous one}. Indeed, regarding Theorem \ref{thm:Cv_discrete_control},
we do not consider discrete optimization parameters that are piecewise affine bounded functions and the cost functions considered does not have the regularization term $\beta|Dq|(\Omega)$ with $\beta>0$. Concerning Theorem \ref{thm:Cv_discrete_control_U_lambda_kappa} we look for $q_h$ that are bounded and piecewise constant but we did not demand that $|Dq_h|(\Omega)\leq \kappa$ for some $\kappa>0$.
Nevertheless, 
{we have observed in our numerical experiments that 
$|Dq^*_h|(\Omega)$ remains bounded when $h$ varies. }
 We can thus {conjecture} that Theorem \ref{thm:Cv_discrete_control_U_lambda_kappa} actually applies to the two test cases considered in this paper. 
\end{rem}
}

\subsection{Example 1: a wave damping problem}
We first consider the minimization of the cost functional
\[ J(q,\psi_{tot})=\dfrac{\omega_0^2}{2} \int_{\Omega_0}|\psi_{tot}(x,y)|^2dxdy,\]
where $\Omega_0=[\frac{L}{6},\frac{5L}{6}]^2$ is the domain where the waves are to be damped. The bathymetry is only optimized on a subset $\Omega_q=[\frac{L}{4},\frac{3L}{4}]^2\subset\Omega_0$.

The results are shown in Figure~\ref{topo_q} for the bathymetry and Figure~\ref{topo_u} for the wave. We observe that the optimal topography we obtain is highly oscillating.  In our experiments,  this oscillation remained at every level of space discretization we have tested. This could be related to the fact that in all our results, $q\in BV(\Omega)$. Note also that the damping is more efficient over $\Omega_q$. This fact is coherent with the results of the next experiment.

\begin{figure}[h!]
	\centering
	\begin{subfigure}{0.475\textwidth}
	\includegraphics[width=\linewidth]{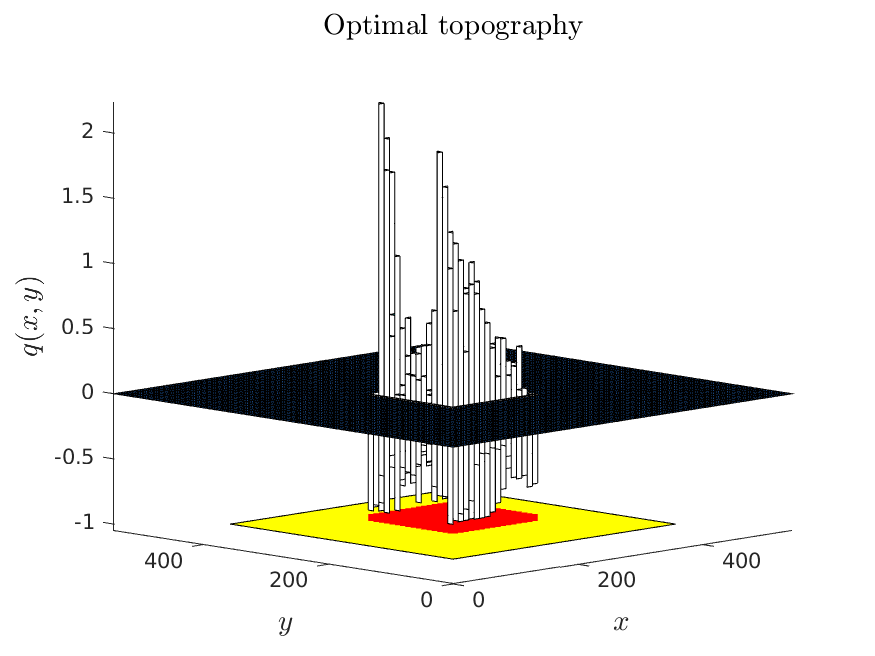}
	\caption{View from above.}
	\end{subfigure}
	\hfill
	\begin{subfigure}{0.475\textwidth}
\includegraphics[width=\linewidth]{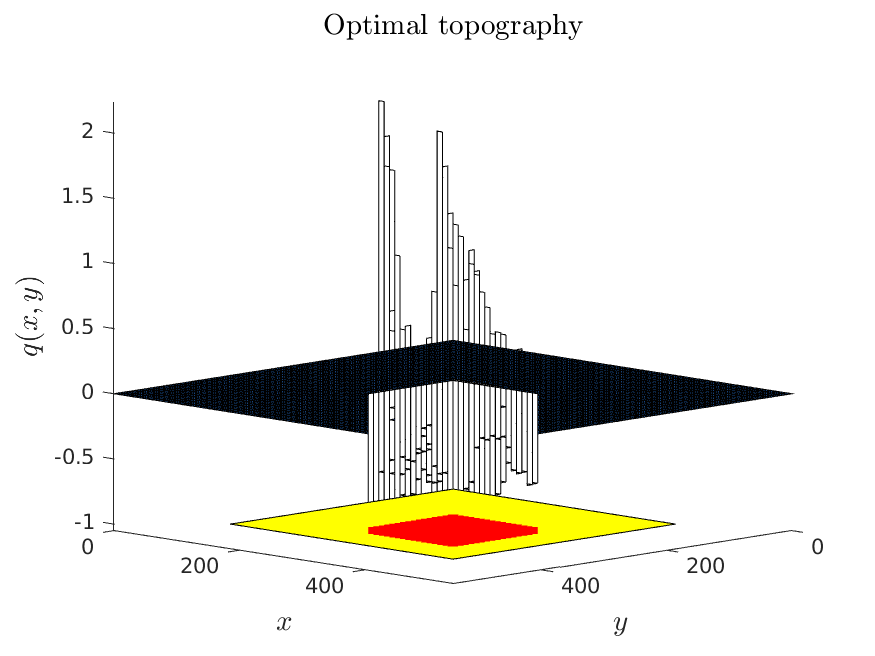}
  	\caption{View from below.}
	\end{subfigure}
	\caption[Optimal bathymetry for a wave damping problem]{Optimal topography for a wave damping problem. The yellow part represents $\Omega_0$ and the red part corresponds to the nodal points associated with $q$.
}\label{topo_q}
\end{figure}

\begin{figure}[h!]
	\centering
	\begin{subfigure}{0.75\textwidth}
	\includegraphics[width=\linewidth]{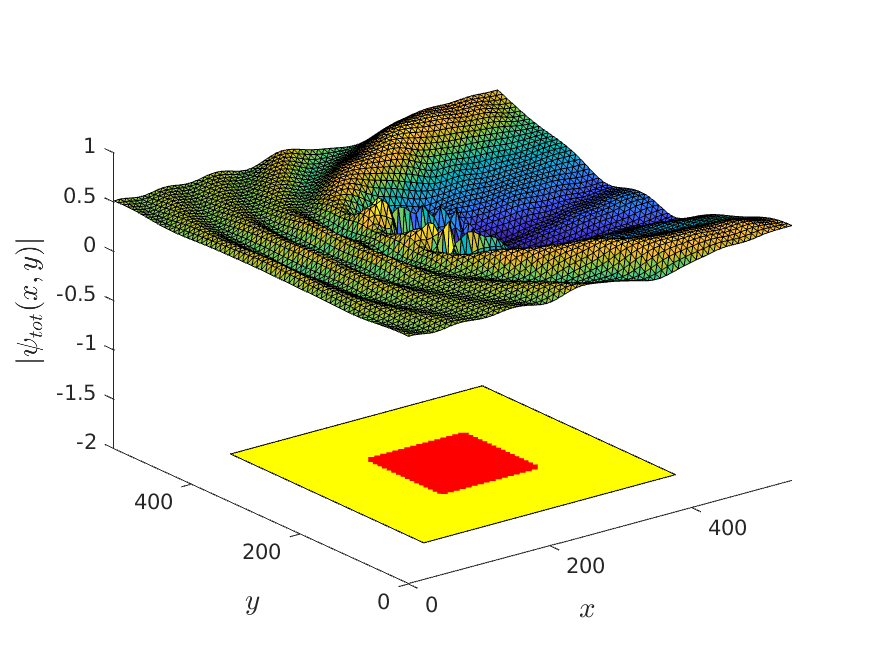}
  	\caption{Norm of the numerical solution.}
	\end{subfigure}	
	\vskip\baselineskip
	\begin{subfigure}{0.475\textwidth}
	\includegraphics[width=\linewidth]{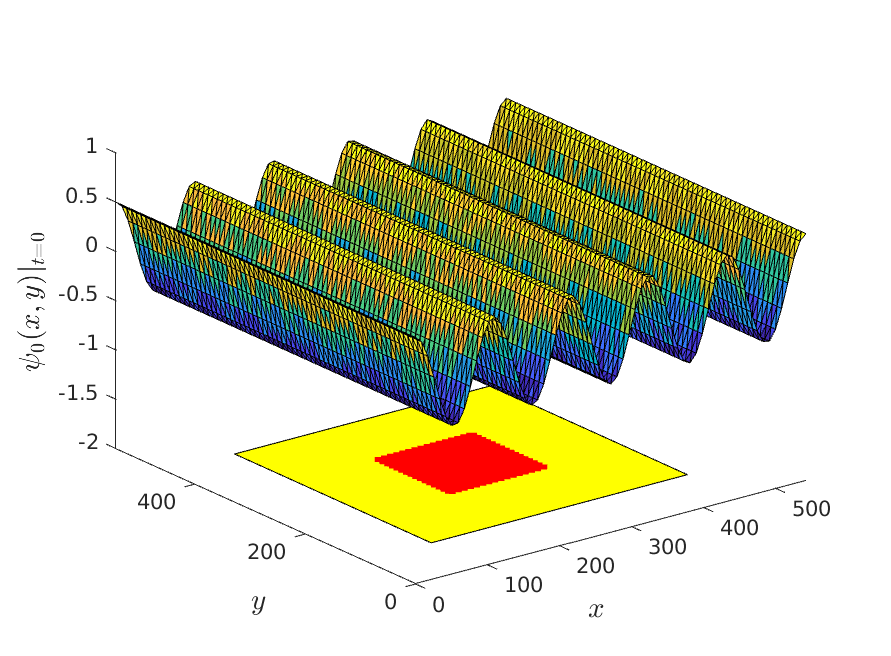}
	\caption{Real part of the incident wave.}
	\end{subfigure}
	\hfill
	\begin{subfigure}{0.475\textwidth}
	\includegraphics[width=\linewidth]{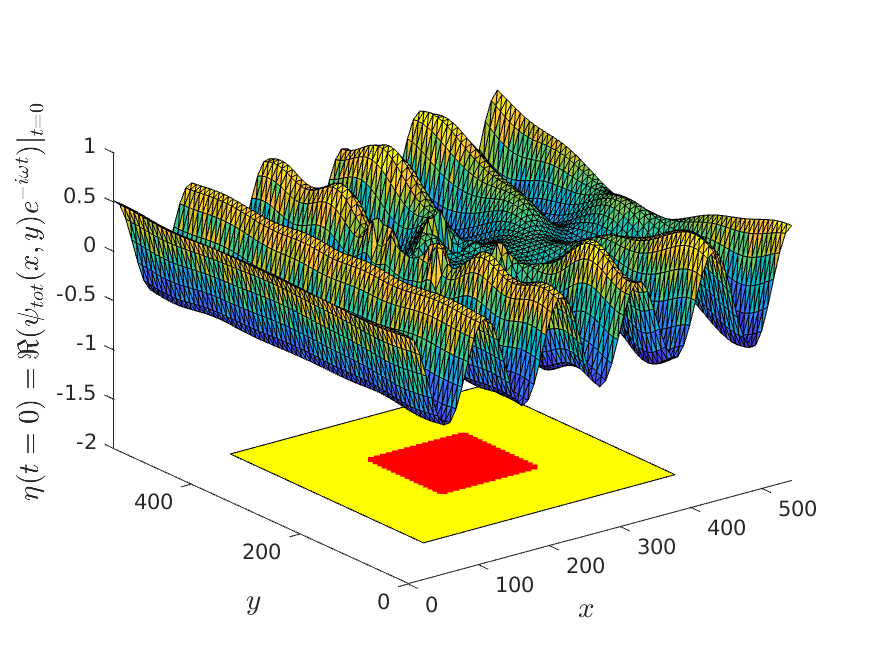}
  	\caption{Real part of the numerical solution.}
	\end{subfigure}
\caption[Numerical solution of a wave damping problem]{Numerical solution of a wave damping problem. The yellow part represents $\Omega_0$ and the red part corresponds to the nodal points associated with $q$.}\label{topo_u}	
\end{figure}

\subsection{Example 2: an inverse problem}
Many inverse problems associated to Helmholtz equation have been studied in the literature. We refer for example to~\cite{Colton,Dorn,Thompson} and the references therein. Note that in most of these papers the inverse problem rather consists in determining the location of a scatterer or its shape, often meaning that $q(x,y)$ is assumed to be constant inside and outside it. On the contrary, the inverse problem we consider in this section consists in determining a full real valued function.

Given the bathymetry 
\[ 
q_{ref}(x,y)	\vcentcolon= \e{-\tau\left(((x-\tfrac{L}{4})^2+(y-\tfrac{L}{4})^2\right)}+\e{-\tau\left((x-\tfrac{3L}{4})^2+(y-\tfrac{3L}{4})^2\right)},
\]
where $\tau= 10^{-3}$, we try to reconstruct it on the domain $\Omega_q = [\frac{L}{8},\frac{3L}{8}]^2\cup[\frac{5L}{8},\frac{7L}{8}]^2$, by minimizing the cost functional 
\[ 
J(q,\psi_{tot})=\dfrac{\omega_0^2}{2}\int_{\Omega_0}|\psi_{tot}(x,y)-\psi_{ref}(x,y)|^2dxdy,
\]
where $\psi_{ref}$ is the amplitude associated with $q_{ref}$ and $\Omega_0 = [\frac{3L}{4}-\delta,\frac{3L}{4}+\delta]^2$, $\delta=\frac{L}{6}$. Note that in this case, $\Omega_q$ is not contained in $\Omega_0$.

In Figure~\ref{inv_prob}, we observe that the part of the bathymetry that does not belong to the observed domain $\Omega_0$ is not recovered by the procedure. On the contrary, the bathymetry is well reconstructed in the part of the domain corresponding to $\Omega_0$. 

\begin{center}
\begin{figure}[h!]
	\centering
	\begin{subfigure}{0.75\textwidth}
	\includegraphics[width=\linewidth]{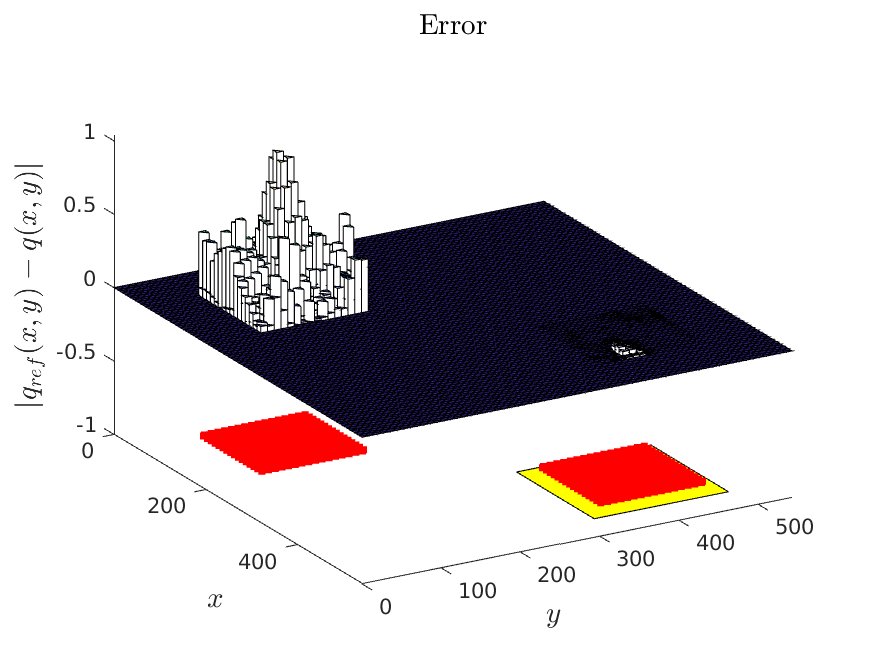}
  	\caption{Reconstruction error.}
	\end{subfigure}	
	\\
	\vskip\baselineskip
	%
%
	\begin{subfigure}{0.475\textwidth}
	\includegraphics[width=\linewidth]{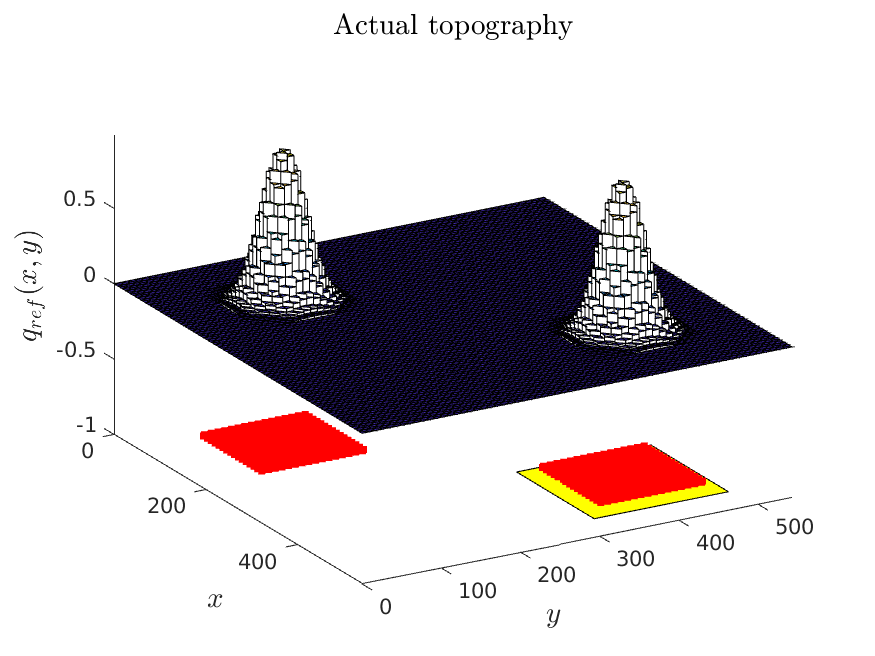}
	\caption{Actual bathymetry.}
	\end{subfigure}
	\hfill
	\begin{subfigure}{0.475\textwidth}
	\includegraphics[width=\linewidth]{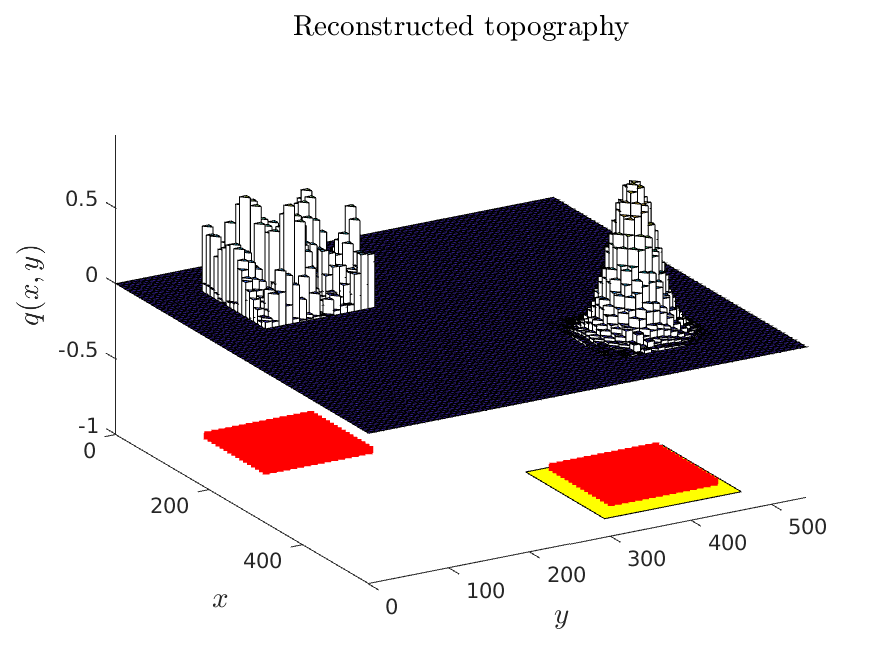}
  	\caption{Reconstructed bathymetry.}
	\end{subfigure}
\caption[Detection of a bathymetry from a wavefield]{Detection of a bathymetry from a wavefield. The yellow part represents $\Omega_0$ and the red part corresponds to the nodal points associated with $q$.}\label{inv_prob}
\end{figure}
\end{center}


%

\section*{Acknowledgments}
The authors acknowledge support from ANR Cin\'e-Para (ANR-15-CE23-0019) and ANR Allowap.

\section*{Appendix: derivation of Saint-Venant system}
For the sake of completeness and following the standard procedure described in~\cite{Perthame} (see also \cite{BSM,Sainte-Marie}), we derive the Saint-Venant equations from the Navier-Stokes system. 
For simplicity of presentation, system \eqref{Euler} is restricted to two dimensions, but a more detailed derivation of the three-dimensional case can be found in \cite{Decoene}. Since our analysis focuses on the shallow water regime, we introduce the parameter $\varepsilon := \dfrac{H}{L}$,  
where $H$ denotes the relative depth and $L$ is the characteristic dimension along the horizontal axis. The importance of the nonlinear terms is represented by the ratio $\delta := \dfrac{A}{H}$,
with $A$ the maximum vertical amplitude. We \JS{then} use the change of variables
\[
x' := \dfrac{x}{L},\, z' := \dfrac{z}{H},\, t' := \dfrac{C_0}{L}t,\]
and 
\[
u' := \dfrac{u}{\delta C_0},\, w' := \dfrac{w}{\delta\varepsilon C_0},\, 
\eta' := \dfrac{\eta}{A},\, z_b' := \dfrac{z_b}{H},\, 
p' := \dfrac{p}{gH}.
\]
where $C_0 = \sqrt{gH}$ is the characteristic dimension for the horizontal velocity. Assuming the viscosity and atmospheric pressure \JS{to be} constants, we define their respective dimensionless versions by 
\[
\mu' := \dfrac{\mu}{C_0 L },\, p_a' := \dfrac{p_a}{gH}.
\]
Dropping primes after rescaling, the dimensionless system \eqref{Euler} reads
\begin{align}
\delta\dpart{u}{t} +\delta^2\left(u\dpart{u}{x} +w\dpart{u}{z}\right)
	&= -\dpart{p}{x} +2\delta\dpp{\mu\dpart{u}{x}}{x}, \label{momentum} \\
	&\qquad +\delta\dpp{\mu\Big(\dfrac{1}{\varepsilon^2}\dpart{u}{z}+\dpart{w}{x}\Big)}{z} \nonumber
	\\
\varepsilon^2\delta\left(\dpart{w}{t} +\delta\Big(u\dpart{w}{x} +w\dpart{w}{z}\Big)\right)
	&= -\dpart{p}{z} -1  \label{vert-velocity}\\
	&\qquad +\delta\dpp{\mu\Big(\dpart{u}{z} +\varepsilon^2\dpart{w}{x}\Big)}{x} +2\delta\dpp{\mu\dpart{w}{z}}{z}, \nonumber \\
\dpart{u}{x} +\dpart{w}{z} &= 0. \label{mass}
\end{align}
The boundary conditions in \eqref{flux} remains similar and reads
\begin{equation}\left\{
\begin{aligned}
-\delta u\dpart{\eta}{x} +w 
	&= \dpart{\eta}{t}\sqrt{1+(\varepsilon\delta)^2 \abs{\dpart{\eta}{x}}^2}
	&&\textrm{on } (x,\delta\eta(x,t),t),\\
u\dpart{z_b}{x} +w 
	&= 0 
	&&\textrm{on } (x,-z_b(x),t).
\end{aligned} \right.\label{flux-2}
\end{equation}
However, the rescaled boundary conditions in \eqref{stress} are now given by  
\begin{align}
\left(p -2\delta\mu\dpart{u}{x}\right)\dpart{\eta}{x} +\mu \left(\dfrac{1}{\varepsilon^2}\dpart{u}{z} +\dpart{w}{x}\right)	
	&= p_a\dpart{\eta}{x} 
	&&\textrm{on } (x,\delta\eta(x,t),t), \label{atm-pressure-1} \\
\delta^2\mu\left(\dpart{u}{z} +\varepsilon^2\dpart{w}{x}\right)\dpart{\eta}{x}	
+\left(p -2\delta\mu\dpart{w}{z}\right) 
	&= p_a 
	&&\textrm{on } (x,\delta\eta(x,t),t),\label{atm-pressure-2}
\end{align}
and at the bottom $(x,-z_b(x),t)$:
\begin{equation}
\begin{aligned}
\varepsilon\left(p-2\delta\mu\dpart{u}{x}\right)\dpart{z_b}{x} 
+\delta\mu\left(\dfrac{1}{\varepsilon}\dpart{u}{z} +\varepsilon\dpart{w}{x}\right)\hspace{2cm}
	&\\
-\delta\mu\Big(\dpart{u}{z} +\varepsilon^2\dpart{w}{x}\Big)\left(\dpart{z_b}{x}\right)^2  
+\varepsilon\left(2\delta\mu\dpart{w}{z}-p\right)\dpart{z_b}{x}
	&= 0.
\end{aligned} \label{bottom-pressure} 
\end{equation}
\smallskip

To derive the Saint-Venant equations, we use an asymptotic analysis in $\varepsilon$. In addition, we assume a small viscosity coefficient
\[
\mu = \varepsilon\mu_0.
\]  
A first simplification of the system consists in deriving an explicit expression for $p$, known as the \textit{hydrostatic pressure}. Indeed, after rearranging the terms of order $\varepsilon^2$ in \eqref{vert-velocity} and integrating in the vertical direction, we get
\begin{align}
p(x,z,t)
	&= \bigO(\varepsilon^2\delta) +(\delta\eta -z) +\varepsilon\delta\mu_0\left(\dpart{u}{x} +2\dpart{w}{z}-\dpart{u}{x}(x,\eta,t)\right)\nonumber \\
	&\qquad +p(x,\delta\eta,t)-2\varepsilon\delta\mu_0\dpart{w}{z}(x,\eta,t). \label{pressure} 
\end{align}
To compute explicitly the last term, we combine \eqref{atm-pressure-1} with \eqref{atm-pressure-2} to obtain
\begin{align*}
p(x,\delta\eta,t) -2\varepsilon\delta\mu_0\dpart{w}{z}(x,\delta\eta,t)
	&= p_a\left(1 -(\varepsilon\delta)^2\Big(\dpart{\eta}{x}\Big)^2\right) \\
	&\qquad +(\varepsilon\delta)^2\left(p-2\varepsilon\mu_0\dpart{u}{x}(x,\eta,t)\right)\left(\dpart{\eta}{x}\right)^2,
\end{align*}
that can be combined with~\eqref{pressure} to obtain
\begin{equation} 
p(x,z,t) = (\delta\eta -z) +p_a +\bigO(\varepsilon\delta). \label{hydro-pressure}
\end{equation}
As a second approximation, we integrate vertically equations \eqref{mass} and \eqref{momentum}. We introduce $h_{\delta} = \delta\eta +z_b$. Due to the Leibnitz integral rule and the boundary conditions in \eqref{flux-2}, integrating the mass equation \eqref{mass} gives
\begin{align*}
\int_{-z_b}^{\delta\eta}\left(\dpart{u}{x} +\dpart{w}{z}\right)dz &= 0\\
\dpp{\int_{-z_b}^{\delta\eta}u dz}{x} -\delta u(x,\delta\eta,t)\dpart{\eta}{x} -u(x,-z_b,t)\dpart{z_b}{x} +w(x,\delta\eta,t) -w(x,-z_b,t) &= 0 \\
\dpart{\eta}{t}\sqrt{1+(\varepsilon\delta)^2 \abs{\dpart{\eta}{x}}^2} +\dpart{(h_{\delta}\overline{u})}{x} &= 0.
\end{align*}
To treat the momentum equation \eqref{momentum}, we notice that Equation \eqref{mass} allows us to rewrite the convective acceleration terms as
\[
u\dpart{u}{x} +w\dpart{u}{z} = \dpart{u^2}{x} +\dpart{uw}{z}.
\]
Its integration, combined with the boundary conditions in \eqref{flux-2}, leads to
\begin{align*}
\int_{-z_b}^{\delta\eta}\left(u\dpart{u}{x} +w\dpart{u}{z}\right)dz
	&= \dpp{\int_{-z_b}^{\delta\eta}u^2 dz}{x} -\delta u^2(x,\delta\eta,t)\dpart{\eta}{x} -u^2(x,-z_b,t)\dpart{z_b}{x} \\
	&\qquad +u(x,\delta\eta,t)\cdot w(x,\delta\eta,t) -u(x,-z_b,t)\cdot w(x,-z_b,t)\\
	&= \dpart{(h_{\delta}\overline{u^2})}{x} +u(x,\delta\eta,t)\dpart{\eta}{t}\sqrt{1+(\varepsilon\delta)^2\abs{\dpart{\eta}{x}}^2},
\end{align*}
where we have introduced the depth-averaged velocity 
\[
\overline{u}(x,t) := \dfrac{1}{h_{\delta}(x,t)}\int_{-z_b}^{\delta\eta}u(x,z,t)dz.
\]
The vertical integration of the left-hand side of \eqref{momentum} then brings
\begin{align*}
\int_{-z_b}^{\delta\eta}\left[\delta\dpart{u}{t} +\delta^2\left(u\dpart{u}{x} +w\dpart{u}{z}\right)\right]dz
	&= \delta\dpart{(h_{\delta}\overline{u})}{t} +\delta^2\dpart{(h_{\delta}\overline{u^2})}{x}\\ 
	&\qquad +\delta^2 u(x,\delta\eta,t)\dpart{\eta}{t}\bigg(\sqrt{1+(\varepsilon\delta)^2\abs{\dpart{\eta}{x}}^2} -1\bigg).
\end{align*}
To deal with the term $h_{\delta}\overline{u^2}$, we start from \eqref{hydro-pressure} which shows that $\dfrac{\partial p}{\partial x} = \bigO(\delta)$. Plugging this expression into \eqref{momentum} yields
\[
\dfrac{\partial^2 u}{\partial z^2} = \bigO(\varepsilon).
\]
From boundary conditions \eqref{atm-pressure-1} and \eqref{bottom-pressure}, we obtain
\[
\dpart{u}{z}(x,\delta\eta,t) = \bigO(\varepsilon^2),\; 
\dpart{u}{z}(x,z_b,t) = \bigO(\varepsilon).
\] 
Consequently, $u(x,z,t) = u(x,0,t) +\bigO(\varepsilon)$ and then $u(x,z,t)- \overline{u}(x,t) = \bigO(\varepsilon)$. 
Hence, we have the approximation
\[
h_\delta\overline{u^2}
= h_\delta\overline{u}^2 +\int_{-z_b}^{\delta\eta}(\overline{u}-u)^2 dz
= h_\delta\overline{u}^2 +\bigO(\varepsilon^2)
\]
and finally 
\begin{align}
\int_{-z_b}^{\delta\eta}\left[\delta\dpart{u}{t} +\delta^2\left(u\dpart{u}{x} +w\dpart{u}{z}\right)\right]dz
	&= \delta\dpart{(h_{\delta}\overline{u})}{t} +\delta^2\dpart{(h_{\delta}\overline{u}^2)}{x} +\bigO(\varepsilon^2\delta^2)\nonumber \\
	&\qquad +\delta^2 u(x,\delta\eta,t)\dpart{\eta}{t}\bigg(\sqrt{1+(\varepsilon\delta)^2\abs{\dpart{\eta}{x}}^2}-1\bigg).  \label{momentum-av-LHS}
\end{align}

We then integrate the right-hand side of Equation \eqref{momentum}
\begin{align*}
\int_{-z_b}^{\delta\eta}\bigg[-\dpart{p}{x} 
	& +\delta\dfrac{\mu_0}{\varepsilon}\dpp{\dpart{u}{z}}{z}
	+\varepsilon\delta\mu_0\left(2\dpp{\dpart{u}{x}}{x} 
	+\dpp{\dpart{w}{x}}{z}\right)\bigg]dz \\
	&= -\delta h_{\delta}\dpart{\eta}{x} +\bigO(\varepsilon\delta) 
	+\delta\left[\dfrac{\mu_0}{\varepsilon}\dpart{u}{z}(x,\delta\eta,t) -\dfrac{\mu_0}{\varepsilon}\dpart{u}{z}(x,-z_b,t)\right].
\end{align*}
Combining this expression with \eqref{momentum-av-LHS}, we get the vertical integration of the momentum equation:
\begin{align}
\dpart{\eta}{t}\sqrt{1+(\varepsilon\delta)^2 \abs{\dpart{\eta}{x}}^2} +\dpart{(h_{\delta}\overline{u})}{x} 
	&= 0 \label{mass-av} \\
\dpart{(h_{\delta}\overline{u})}{t} +\delta\dpart{(h_{\delta}\overline{u}^2)}{x}
	&= -h_{\delta}\dpart{\eta}{x}  
+\left[\dfrac{\mu_0}{\varepsilon}\dpart{u}{z}(x,\delta\eta,t) -\dfrac{\mu_0}{\varepsilon}\dpart{u}{z}(x,-z_b,t)\right] \nonumber \\
	&\qquad +\delta u(x,\delta\eta,t)\dpart{\eta}{t}\bigg(\sqrt{1+(\varepsilon\delta)^2\abs{\dpart{\eta}{x}}^2}-1\bigg)+\bigO(\varepsilon),\label{momentum-av} 
\end{align}
The convergence of \eqref{momentum-av} is guaranteed by the boundary equations \eqref{atm-pressure-1} and \eqref{bottom-pressure}, from which we get
\[
\dfrac{\mu_0}{\varepsilon}\dpart{u}{z}(x,\delta\eta,t) = \bigO(\varepsilon\delta),\; 
\dfrac{\mu_0}{\varepsilon}\dpart{u}{z}(x,-z_b,t) = \bigO(\varepsilon). \]
Hence the system~(\ref{mass-av2}--\ref{momentum-av2}).

\bibliographystyle{abbrv}			
\bibliography{bathymetry_bib}

\begin{thebibliography}{10}

\bibitem{allaire2007conception}
G.~Allaire and M.~Schoenauer.
\newblock {\em Conception optimale de structures}, volume~58.
\newblock Springer, 2007.

\bibitem{BV_properties}
L.~Ambrosio, N.~Fusco, and D.~Pallara.
\newblock {\em Functions of Bounded Variation and Free Discontinuity Problems}.
\newblock Oxford Mathematical Monographs. The Clarendon Press, Oxford
  University Press, New York, 2000.

\bibitem{Kunish_book_2012}
H.~T. Banks and K.~Kunisch.
\newblock {\em Estimation techniques for distributed parameter systems}.
\newblock Springer Science \& Business Media, 2012.

\bibitem{Bartels_2012}
S.~Bartels.
\newblock Total variation minimization with finite elements: convergence and
  iterative solution.
\newblock {\em SIAM Journal on Numerical Analysis}, 50(3):1162--1180, 2012.

\bibitem{Barucq_2017}
H.~Barucq, T.~Chaumont-Frelet, and C.~Gout.
\newblock Stability analysis of heterogeneous {Helmholtz} problems and finite
  element solution based on propagation media approximation.
\newblock {\em Mathematics of Computation}, 86(307):2129--2157, 2017.

\bibitem{Bastide_2018}
A.~Bastide, P.-H. Cocquet, and D.~Ramalingom.
\newblock Penalization model for {Navier-Stokes-Darcy} equation with
  application to porosity-oriented topology optimization.
\newblock {\em Mathematical Models and Methods in Applied Sciences (M3AS)},
  28(8):1481--1512, 2018.

\bibitem{beretta2018reconstruction}
E.~Beretta, S.~Micheletti, S.~Perotto, and M.~Santacesaria.
\newblock Reconstruction of a piecewise constant conductivity on a polygonal
  partition via shape optimization in eit.
\newblock {\em Journal of Computational Physics}, 353:264--280, 2018.

\bibitem{bernland2018acoustic}
A.~Bernland, E.~Wadbro, and M.~Berggren.
\newblock Acoustic shape optimization using cut finite elements.
\newblock {\em International Journal for Numerical Methods in Engineering},
  113(3):432--449, 2018.

\bibitem{MB2}
A.~Bouharguane and B.~Mohammadi.
\newblock Minimization principles for the evolution of a soft sea bed
  interacting with a shallow.
\newblock {\em International Journal of Computational Fluid Dynamics},
  26(3):163--172, 2012.

\bibitem{BSM}
O.~Bristeau and J.~Sainte-Marie.
\newblock Derivation of a non-hydrostatic shallow water model; comparison with
  {Saint-Venant} and {Boussinesq} systems.
\newblock {\em Discrete and Continuous Dynamical Systems - Series B (DCDS-B)},
  10(4):733--759, 2008.

\bibitem{Brown_Helmholtz_Lip}
D.~Brown, D.~Gallistl, and D.~Peterseim.
\newblock Multiscale {Petrov-Galerkin} method for high-frequency heterogeneous
  {Helmholtz} equations.
\newblock In M.~Griebel and M.~Schweitzer, editors, {\em Meshfree Methods for
  Partial Differential Equations VIII}, Springer Lecture notes in computational
  science and engineering 115, pages 85--115. Springer, 2017.

\bibitem{Belik_2003}
P.~B\u{e}l\'ik and M.~Luskin.
\newblock Approximation by piecewise constant functions in a {BV} metric.
\newblock {\em Mathematical Models and Methods in Applied Sciences},
  13(3):373--393, 2003.

\bibitem{Chen_BV_1999}
Z.~Chen and J.~Zou.
\newblock An augmented {Lagrangian} method for identifying discontinuous
  parameters in elliptic systems.
\newblock {\em SIAM Journal on Control and Optimization}, 37(3):892--910, 1999.

\bibitem{christiansen2019designing}
R.~E. Christiansen, F.~Wang, O.~Sigmund, and S.~Stobbe.
\newblock Designing photonic topological insulators with quantum-spin-hall edge
  states using topology optimization.
\newblock {\em Nanophotonics}, 2019.

\bibitem{christiansen2019acoustic}
R.~E. Christiansen, F.~Wang, S.~Stobbe, and O.~Sigmund.
\newblock Acoustic and photonic topological insulators by topology
  optimization.
\newblock In {\em Metamaterials, Metadevices, and Metasystems 2019}, volume
  11080, page 1108003. International Society for Optics and Photonics, 2019.

\bibitem{algo2}
T.~Coleman and Y.~Li.
\newblock On the convergence of interior-reflective {Newton} methods for
  nonlinear minimization subject to bounds.
\newblock {\em Mathematical Programming}, 67(1):189--224, 1994.

\bibitem{algo1}
T.~Coleman and Y.~Li.
\newblock An interior trust region approach for nonlinear minimization subject
  to bounds.
\newblock {\em SIAM Journal of Optimization}, 6(2):418--445, 1996.

\bibitem{Colton}
D.~Colton, J.~Coyle, and P.~Monk.
\newblock {Recent developments in inverse acoustic scattering theory}.
\newblock {\em {SIAM Review}}, {42}({3}):{369--414}, {2000}.

\bibitem{DB-KdV}
J.~Dalphin and R.~Barros.
\newblock Shape optimization of a moving bottom underwater generating solitary
  waves ruled by a forced {KdV} equation.
\newblock {\em Journal of Optimization Theory and Applications},
  180(2):574--607, 2019.

\bibitem{Decoene}
A.~Decoene, L.~Bonaventura, E.~Miglio, and F.~Saleri.
\newblock Asymptotic derivation of the section-averaged shallow water equations
  for river hydraulics.
\newblock {\em Mathematical Models and Methods in Applied Sciences (M3AS)},
  19:387--417, 2009.

\bibitem{Dorn}
O.~Dorn, E.~Miller, and C.~Rappaport.
\newblock {A shape reconstruction method for electromagnetic tomography using
  adjoint fields and level sets}.
\newblock {\em {Inverse Problems}}, {16}({5}):{1119--1156}, {2000}.

\bibitem{Ern_book_FE}
A.~Ern and J.-L. Guermond.
\newblock {\em Theory and Practice of Finite Elements}, volume 159 of {\em
  Applied Mathematical Sciences}.
\newblock Springer-Verlag New York, 2004.

\bibitem{Esterhazy_2012}
S.~Esterhazy and J.~M. Melenk.
\newblock On stability of discretizations of the {Helmholtz} equation.
\newblock In {\em Numerical analysis of multiscale problems}, volume~83 of {\em
  Lecture Notes in Computational Science and Engineering}, pages 285--324.
  Springer Verlag, Berlin, Heidelberg, 2012.

\bibitem{Perthame}
J.-F. Gerbeau and B.~Perthame.
\newblock Derivation of viscous {Saint-Venant} system for laminar shallow
  water; numerical validation.
\newblock {\em Discrete and Continuous Dynamical Systems - Series B (DCDS-B)},
  1(1):89--102, 2001.

\bibitem{Trudinger}
D.~Gilbarg and N.~S. Trudinger.
\newblock {\em Elliptic partial differential equations of second order}.
\newblock Classics in Mathematics. Springer-Verlag, Berlin, Heidelberg, 2nd
  edition, 2001.

\bibitem{Graham_variable_2018}
I.~Graham and S.~Sauter.
\newblock Stability and finite element error analysis for the helmholtz
  equation with variable coefficients.
\newblock {\em Mathematics of Computation}, 89(321):105--138, 2020.

\bibitem{Graham_variable_2018_2}
I.~G. Graham, O.~R. Pembery, and E.~A. Spence.
\newblock The helmholtz equation in heterogeneous media: a priori bounds,
  well-posedness, and resonances.
\newblock {\em Journal of Differential Equations}, 266(6):2869--2923, 2019.

\bibitem{haslinger2003introduction}
J.~Haslinger and R.~A. M{\"a}kinen.
\newblock {\em Introduction to shape optimization: theory, approximation, and
  computation}.
\newblock SIAM, 2003.

\bibitem{Haslinger_2015}
J.~Haslinger and R.~A.~E. M\"{a}kinen.
\newblock On a topology optimization problem governed by two-dimensional
  {Helmholtz} equation.
\newblock {\em Computational Optimization and Applications}, 62(2):517--544,
  2015.

\bibitem{Hetmaniuk_2007}
U.~Hetmaniuk.
\newblock Stability estimates for a class of {Helmholtz} problems.
\newblock {\em Communications in Mathematical Sciences}, 5(3):665--678, 2007.

\bibitem{Honnorat2009}
M.~Honnorat, J.~Monnier, and F.-X. {Le Dimet}.
\newblock Lagrangian data assimilation for river hydraulics simulations.
\newblock {\em Computing and Visualization in Science}, 12(5):235--246, 2009.

\bibitem{IAMB}
D.~Isebe, P.~Azerad, B.~Mohammadi, and F.~Bouchette.
\newblock Optimal shape design of defense structures for minimizing short wave
  impact.
\newblock {\em Coastal Engineering}, 55(1):35--46, 2008.

\bibitem{jensen2005topology}
J.~S. Jensen and O.~Sigmund.
\newblock Topology optimization of photonic crystal structures: a
  high-bandwidth low-loss t-junction waveguide.
\newblock {\em JOSA B}, 22(6):1191--1198, 2005.

\bibitem{Ural_1968}
O.~A. Ladyzhenskaya and N.~N. Ural'tseva.
\newblock {\em Linear and quasilinear elliptic equations}, volume~46 of {\em
  Mathematics in Science and Engineering}.
\newblock Academic Press, New York, 1968.

\bibitem{LM}
B.~{Le Méhauté}.
\newblock {\em An Introduction to Hydrodynamics and Water Waves}.
\newblock Springer Study Edition. Springer-Verlag, New York, 1976.

\bibitem{lohndorf2011wavenumber}
M.~L{\"o}hndorf and J.~M. Melenk.
\newblock Wavenumber-explicit hp-bem for high frequency scattering.
\newblock {\em SIAM Journal on Numerical Analysis}, 49(6):2340--2363, 2011.

\bibitem{MB1}
B.~Mohammadi and A.~Bouharguane.
\newblock Optimal dynamics of soft shapes in shallow waters.
\newblock {\em Computers and Fluids}, 40(1):291--298, 2011.

\bibitem{NDZ}
H.~Nersisyan, D.~Dutykh, and E.~Zuazua.
\newblock Generation of two-dimensional water waves by moving bottom
  disturbances.
\newblock {\em IMA Journal of Applied Mathematics}, 80(4):1235--1253, 2014.

\bibitem{Nittka_2011}
R.~Nittka.
\newblock Regularity of solutions of linear second order elliptic and parabolic
  boundary value problems on {Lipschitz} domains.
\newblock {\em Journal of Differential Equations}, 251:860--880, 2011.

\bibitem{Nedelec}
J.-C. Nédélec.
\newblock {\em Acoustic and Electromagnetic Equations: Integral Representations
  for Harmonic Problems}, volume 144 of {\em Applied Mathematical Sciences}.
\newblock Springer-Verlag, New York, 2001.

\bibitem{Sainte-Marie}
J.~Sainte-Marie.
\newblock Vertically averaged models for the free surface {Euler} system.
  derivation and kinetic interpretation.
\newblock {\em Mathematical Models and Methods in Applied Sciences (M3AS)},
  21(3):459--490, 2011.

\bibitem{Sellier2016}
M.~Sellier.
\newblock Inverse problems in free surface flows: a review.
\newblock {\em Acta Mechanica}, 227(3):913--935, 2016.

\bibitem{Thompson}
L.~Thompson.
\newblock {A review of finite-element methods for time-harmonic acoustics}.
\newblock {\em {Journal of The Acoustical Society of America}},
  {119}({3}):{1315--1330}, {2006}.

\bibitem{Beach-Wizard}
A.~van Dongeren, N.~Plant, A.~Cohen, D.~Roelvink, M.~C. Haller, and
  P.~Catalán.
\newblock Beach wizard: Nearshore bathymetry estimation through assimilation of
  model computations and remote observations.
\newblock {\em Coastal Engineering}, 55(12):1016--1027, 2008.

\bibitem{wadbro2010shape}
E.~Wadbro, R.~Udawalpola, and M.~Berggren.
\newblock Shape and topology optimization of an acoustic horn--lens
  combination.
\newblock {\em Journal of Computational and Applied Mathematics},
  234(6):1781--1787, 2010.

\end{thebibliography}

\end{document}